\renewcommand{\topfraction}{.85}
\renewcommand{\bottomfraction}{.7}
\renewcommand{\textfraction}{.15}
\renewcommand{\floatpagefraction}{.66}
\renewcommand{\dbltopfraction}{.66}
\renewcommand{\dblfloatpagefraction}{.66}
\newtheorem{thm}{Theorem}[section]
\newtheorem{theo}[thm]{Theorem}
\newtheorem{defi}[thm]{Definition} 
\newtheorem{coro}[thm]{Corollary}
\newtheorem{prop}[thm]{Proposition}
\newtheorem{lemma}[thm]{Lemma}
\newtheorem{rem}[thm]{Remark}
\newtheorem{remark}[thm]{Remark}
  \def\command@factory#1{%
    \expandafter\def\csname cal#1\endcsname{\mathcal{#1}}
    \expandafter\def\csname frak#1\endcsname{\mathfrak{#1}}
    \expandafter\def\csname scr#1\endcsname{\mathscr{#1}}
    \expandafter\def\csname bb#1\endcsname{\mathbb{#1}}
    \expandafter\def\csname rm#1\endcsname{\mathrm{#1}}
  }
\newcommand*{\longhookrightarrow}{\ensuremath{\lhook\joinrel\relbar\joinrel\rightarrow}}
\newcommand {\tto}{ \to\rangle }
\newcommand {\onto} {\twoheadrightarrow}
\newcommand {\into} {\hookrightarrow}
\newcommand {\xra} {\xrightarrow}    
\newcommand{\ra}{\rightarrow}
\newcommand{\imp} {\Rightarrow}
\newcommand{\actedon}{\curvearrowleft} 
\newcommand{\actson}{\curvearrowright}
\newcommand {\sd} {\rtimes}   
\newcommand{\semidirect}{\ltimes}
\newcommand{\isemidirect}{\rtimes}
\newcommand{\tensor}{\otimes}
\newcommand{\Dunion}{\bigsqcup} 
\newcommand{\disjoint}{\sqcup}
\newcommand{\normal} {\vartriangleleft}
\newcommand {\ie}{ i.e.  }
\newcommand{\ul}[1]{\underline{#1}} 
\newcommand{\ol}[1]{\overline{#1}}
\newcommand{\Fix} {\operatorname{Fix}}
\newcommand{\diam}{\mathop{\mathrm{diam}\;}}
\newcommand{\Zmax}{Z_{max}}
\newcommand{\ZmJSJ}{$\mathcal{Z}_{{\rm max}}$-JSJ }
\newcommand{\ZJSJ}{$\mathcal{Z}$-JSJ }
\newcommand{\ad}{{\rm ad}}
\newcommand{\Aut}{{\rm Aut}}
\newcommand{\Out}{{\rm Out}}
\newcommand {\Inn} {{\rm Inn}}
\title{On suspensions, and conjugacy of  hyperbolic  automorphisms
  {\it and} of a few more.}
\author{Fran\c{c}ois Dahmani\thanks{Partially supported by the ANR
    2011-BS01-013 and the Institut Universitaire de France}}
\begin{document}

\maketitle

\begin{abstract} The two parts of this work have been through
  different publishing processes. The main reason of this is that part
  2, and only part 2, relies  at the time of writing on unpublished
  material.   The two parts  are shown here in a single
  arXiv document because it did not make much sense to separate them
  for other purposes than the one explained above. Please notice
  that numbering has been chosen to match the published
  versions. Therefore there are two sections called 1, etc.  There are
  also    inherently some repetitions ({\it e.g.} in both introductions), I
  apologise for that. 

  The first part (page \pageref{part1}) proposes a solution to the conjugacy problem for
  hyperbolic automorphisms of finitely presented groups. 

  The second part (page \pageref{part2}) proposes a solution  to the conjugacy problem for  {\it some} relatively hyperbolic automorphisms of free groups. 
\end{abstract}
\newpage

\tableofcontents

\newpage

\part{On suspensions, and conjugacy of hyperbolic automorphisms.} \label{part1}

\if0
 \documentclass[11pt]{article}
\usepackage{stmaryrd}
\usepackage{latexsym}
\usepackage{amsmath,amsthm,amsfonts,amssymb, mathrsfs, wasysym}
\renewcommand{\topfraction}{.85}
\renewcommand{\bottomfraction}{.7}
\renewcommand{\textfraction}{.15}
\renewcommand{\floatpagefraction}{.66}
\renewcommand{\dbltopfraction}{.66}
\renewcommand{\dblfloatpagefraction}{.66}
\setcounter{topnumber}{9}
\setcounter{bottomnumber}{9}
\setcounter{totalnumber}{20}
\setcounter{dbltopnumber}{9}

\newcommand{\coucou}[1]{\footnote{#1}\marginpar{$\leftarrow$}}

\newtheorem{thm}{Theorem}[section]
\newtheorem{theo}[thm]{Theorem}
\newtheorem{thmbis}{Theorem}
\newtheorem{dfn}[thm]{Definition} 
\newtheorem{defi}[thm]{Definition} 
\newtheorem{cor}[thm]{Corollary}
\newtheorem{coro}[thm]{Corollary}
\newtheorem{corbis}{Corollary}
\newtheorem{prop}[thm]{Proposition} 
\newtheorem{propbis}{Proposition} 
\newtheorem{lem}[thm]{Lemma} 
\newtheorem{lemma}[thm]{Lemma} 
\newtheorem{lembis}{Lemma} 
\newtheorem{claim}[thm]{Claim} 
\newtheorem{claimbis}{Claim} 
\newtheorem{fact}[thm]{Fact} 
\newtheorem{factbis}{Fact} 
\newtheorem{qst}[thm]{Question} 
\newtheorem{qstbis}{Question} 
\newtheorem{pb}[thm]{Problem} 
\newtheorem{pbbis}{Problem} 
 \newtheorem{question}[thm]{Question}

\newtheorem{algo}[thm]{Algorithm} 
\newtheorem{rem}[thm]{Remark}
\newtheorem{remark}[thm]{Remark}
\newtheorem{example}[thm]{Example}
\newtheorem{notation}[thm]{Notation}
\newenvironment{preuve}[1][Preuve]{\begin{proof}[#1]}{\end{proof}}

\newcommand{\disjoint}{\sqcup}
\newcommand{\semidirect}{\ltimes}
\newcommand{\Zmax}{Z_{max}}
\newcommand{\ZmJSJ}{$\mathcal{Z}_{{\rm max}}$-JSJ }
\newcommand{\ZJSJ}{$\mathcal{Z}$-JSJ }
\newcommand{\actson}{\curvearrowright}
\newcommand{\ad}{{\rm ad}}
\newcommand{\diam}{\mathop{\mathrm{diam}\;}}
\newcommand{\Aut}{{\rm Aut}}
\newcommand{\Out}{{\rm Out}}
\newcommand {\sd} {\rtimes}   
\newcommand {\Inn} {{\rm Inn}}
\newcommand {\Fix} {{\rm Fix}}

\newcommand {\calA} {{\mathcal {A}}}   
\newcommand {\calB} {{\mathcal {B}}}   
\newcommand {\calC} {{\mathcal {C}}}   
\newcommand {\calD} {{\mathcal {D}}}   
\newcommand {\calE} {{\mathcal {E}}}   
\newcommand {\calF} {{\mathcal {F}}}   
\newcommand {\calG} {{\mathcal {G}}}   
\newcommand {\calH} {{\mathcal {H}}}   
\newcommand {\calI} {{\mathcal {I}}}   
\newcommand {\calJ} {{\mathcal {J}}}   
\newcommand {\calK} {{\mathcal {K}}}   
\newcommand {\calL} {{\mathcal {L}}}   
\newcommand {\calM} {{\mathcal {M}}}   
\newcommand {\calN} {{\mathcal {N}}}   
\newcommand {\calO} {{\mathcal {O}}}   
\newcommand {\calP} {{\mathcal {P}}}   
\newcommand {\calQ} {{\mathcal {Q}}}   
\newcommand {\calR} {{\mathcal {R}}}   
\newcommand {\calS} {{\mathcal {S}}}   
\newcommand {\calT} {{\mathcal {T}}}   
\newcommand {\calU} {{\mathcal {U}}}   
\newcommand {\calV} {{\mathcal {V}}}   
\newcommand {\calW} {{\mathcal {W}}}   
\newcommand {\calX} {{\mathcal {X}}}   
\newcommand {\calY} {{\mathcal {Y}}}   
\newcommand {\calZ} {{\mathcal {Z}}}

\newcommand {\bbA} {{\mathbb {A}}}   
\newcommand {\bbB} {{\mathbb {B}}}   
\newcommand {\bbC} {{\mathbb {C}}}   
\newcommand {\bbD} {{\mathbb {D}}}   
\newcommand {\bbE} {{\mathbb {E}}}   
\newcommand {\bbF} {{\mathbb {F}}}   
\newcommand {\bbG} {{\mathbb {G}}}   
\newcommand {\bbH} {{\mathbb {H}}}   
\newcommand {\bbI} {{\mathbb {I}}}   
\newcommand {\bbJ} {{\mathbb {J}}}   
\newcommand {\bbK} {{\mathbb {K}}}   
\newcommand {\bbL} {{\mathbb {L}}}   
\newcommand {\bbM} {{\mathbb {M}}}   
\newcommand {\bbN} {{\mathbb {N}}}   
\newcommand {\bbO} {{\mathbb {O}}}   
\newcommand {\bbP} {{\mathbb {P}}}   
\newcommand {\bbQ} {{\mathbb {Q}}}   
\newcommand {\bbR} {{\mathbb {R}}}   
\newcommand {\bbS} {{\mathbb {S}}}   
\newcommand {\bbT} {{\mathbb {T}}}   
\newcommand {\bbU} {{\mathbb {U}}}   
\newcommand {\bbV} {{\mathbb {V}}}   
\newcommand {\bbW} {{\mathbb {W}}}   
\newcommand {\bbX} {{\mathbb {X}}}   
\newcommand {\bbY} {{\mathbb {Y}}}   
\newcommand {\bbZ} {{\mathbb {Z}}}   
\newcommand {\sd} {\rtimes}   
\newcommand {\Inn} {{\rm Inn}}
\newcommand {\Fix} {{\rm Fix}}

\newcommand {\ie}{ i.e.  }
\newcommand*{\longhookrightarrow}{\ensuremath{\lhook\joinrel\relbar\joinrel\rightarrow}}
\newcommand {\tto}{ \to\rangle }

\newcommand {\onto} {\twoheadrightarrow}
\newcommand {\into} {\hookrightarrow}
\newcommand {\xra} {\xrightarrow}    
\newcommand{\imp} {\Rightarrow}
\newcommand{\actedon}{\curvearrowleft} 
 
\newcommand{\ul}[1]{\underline{#1}} 
\newcommand{\ol}[1]{\overline{#1}}

\newcommand{\isemidirect}{\rtimes}
\newcommand{\normal} {\vartriangleleft}

\newcommand{\tensor}{\otimes}
\newcommand{\dunion}{\sqcup}
\newcommand{\Dunion}{\bigsqcup} 
\newcommand{\degree}{\ensuremath{{}^{\mathrm{o}}}}
\newcommand{\ra}{\rightarrow}

\title{On suspensions, and conjugacy of hyperbolic automorphisms.}
\author{Fran\c{c}ois Dahmani\thanks{Partially supported by the ANR
    (grant 2011-BS01-013-02, and LabEx Persyval 11-LABX-0025) and the Institut Universitaire de France}}

\begin{document}

\maketitle

\fi

\begin{abstract} We remark that the conjugacy problem for pairs of hyperbolic automorphisms of
  a finitely presented group (typically a free group) is decidable. The solution that we propose
  uses the isomorphism problem for the suspensions, and the study of
  their automorphism group. 
  \end{abstract}

\subsection*{Introduction}

Let $F$ be a finitely presented group, $\Aut(F)$ be its automorphism
group,  and $\Out(F) = \Aut(F)/\Inn (F)$ be its outer automorphism group.

A way to consider the conjugacy problem in $\Aut(F)$, or
$\Out(F)$, is to relate it to an isomorphism problem on semi-direct
products of $F$ with $\mathbb{Z}$.

 Given two semi-direct products, $F\sd_\alpha \langle t \rangle$  and
 $F\sd_\beta \langle t' \rangle$, their structural automorphisms
 $\alpha$ and $\beta$ are conjugated in $\Aut(F)$ if and only if there
 is an isomorphism $F\sd_\alpha \langle t \rangle   \to  F\sd_\beta
 \langle t' \rangle$ sending  $F$ on $F$, and $t$ on $t'$. They are
 conjugated in  $\Out(F)$  if and only if there is an isomorphism
 sending $F$ on $F$, and $t$ in $t'F$ (though it is a well known fact
 with a standard proof,  we refer to Lemma \ref{lem;wellknown} for the
 version that we'll use).

By analogy with topology and dynamical systems, we wish to call such semi-direct products
{\it suspensions} of $F$, in which $F$ is the {\it fiber} and $t$ is
the choice of a {\it transverse direction}, and $tF$ is the choice of
a transverse {\it orientation}. The conjugacy problem in $Out(F)$ can be expressed as the problem of determining whether
suspensions are fiber-and-orientation-preserving isomorphic.

 We carry out this approach for automorphisms of finitely
presented groups producing word-hyperbolic suspensions.

Consider for instance $F=F_n$ a free group of finite rank $n$. 
 In that case, a solution to the conjugacy problem in $\Out(F_n)$ was announced
by Lustig \cite{Lu1, Lu2}. However, it might still be desirable to find
short\footnote{In the sense that the exposition is 
  short; in this paper  we ostensibly ignore algorithmic complexity, and to some extend conceptual complexity hidden in the tools that are used.} complete solutions for specific classes of elements in
$\Out(F_n)$. For instance, consider the class of  atoroidal
automorphisms: those whose powers   do not preserve any conjugacy class
beside $\{1\}$. Since  Brinkmann proved in \cite{Br_gafa}  that an
automorphism produces a hyperbolic suspension if and only if
it is atoroidal, there is a conceptually simple (slightly brutal) way  to algorithmically check
whether a given automorphism is indeed atoroidal. It consists in
two parallel procedures. The first one looks for a
preserved conjugacy class, by enumerating elements, and their images
by powers of the given automorphism; it halts when a non trivial element
is found such that its image by a non-trivial power of the
automorphism are found to be conjugated in $F_n$. The second  is
Papasoglu's procedure \cite{Pap} applied on the semi-direct product
of $F_n$ by $\mathbb{Z}$ (with given structural automorphism), that
halts if and only if the semi-direct product is
word-hyperbolic. Brinkmann's results says that exactly one of these two
procedures will halt, and depending which one halts, we deduce whether
or not the automorphism is atoroidal.

If two given automorphisms are found to be atoroidal with this procedure, our
main result will allow to decide whether they are conjugate in $\Out F_n$.

 For hyperbolic groups, the isomorphism problem is solved \cite{Sel, DGr,
  DGu_gafa}. In several examples, the solution available can settle the
conjugacy problem. Take two pseudo-Anosov diffeomorphisms of a
hyperbolic surface. The mapping tori are closed hyperbolic
$3$-manifold, hence hyperbolic and rigid (in the sense that their
outer-automorphism groups are finite). Sela's solution to the
isomorphism problem of their fundamental groups \cite[0.3,  7.3]{Sel} 
 provides all conjugacy
classes of isomorphisms (there are finitely many), and from that
point, it is possible to check whether one of them preserves
the fiber and the orientation. 

For automorphisms of a free group, the analogous situation is when the two
automorphisms are atoroidal, fully irreducible (with irreducible
powers), and for their conjugacy problem, see \cite{Los, Lu3, Sel},
the later (in Coro. 0.6 {\it loc. cit.}),  using the same strategy as above. However, there are atoroidal automorphisms for which the
suspension, though hyperbolic, is not rigid. In \cite{Br_split}
Brinkmann gave several examples with different behaviors. In
particular, the solution to the isomorphism problem of hyperbolic
groups will not reveal all isomorphisms between suspensions, and since
the fibers are exponentially distorted in the suspensions,  the usual rational tools
(see \cite{D_israel, DGu_JoT}) do not work for solving the isomorphism problem
with such a preservation constraint.
 One can thus merely detect the
existence of one isomorphism (say $\iota$), but for investigating the existence of
an isomorphism with the aforementioned properties, 
one is led to consider an orbit
problem of the automorphism group of $F\sd \langle t \rangle$: decide whether
an automorphism sends $\iota(F)$ on $F$ and $\iota(t)$    
 in $t'F$.  

Orbits problems are not necessarily easier, especially if the group
acting is large and complicated. In \cite{BMV}, for instance,
Bogopolski, Martino and Ventura propose a subgroup of
$GL(4,\mathbb{Z})$ whose orbit problem on $\mathbb{Z}^4$ is undecidable.

In 
this paper we prove that, if $F$ is finitely generated  and  $F\sd \langle t \rangle$
hyperbolic, then   $\Out(F\sd \langle t \rangle)$  contains a finite
index abelian subgroup,  whose action on $H_1(F\sd \langle t \rangle)$
is generated by transvections. This allows us to prove that the specific orbit problem
above is solvable in that case, by reducing it to a system of linear Diophantine
equations, read in  $H_1(F\sd \langle t \rangle)$. This is explained
in \ref{sec;orbit_Mod}.

These are thus the key steps to produce what we see as a picturesque way of solving the
conjugacy problem for automorphisms of  finitely presented groups with
hyperbolic suspension (Theorem \ref{thm;decide_fop_iso_for_hyp}). 

The proof that  $\Out(F\sd \langle t \rangle )$   is virtually
abelian is the conjunction of Proposition \ref{prop;Mod_ab} (together with the remark \ref{rem;Mod_ab}, which
is not actually needed in the rest of the proof)  and Proposition
\ref{prop;coset_rep}. The proof of the later is is done by considering the canonical JSJ decomposition of the
hyperbolic group $F\sd \langle t\rangle$, and by proving that  
this graph-of-groups
decomposition does not contain any surface vertex group. We cannot resist to 
 sketch the proof of this key fact (that will be detailed in \ref{sec;no_surf}, and that takes
roots in the way Brinkmann produces his examples in \cite{Br_split}). 
 Consider the tree of the JSJ decomposition $T$, and
$\mathbb{X}$ the graph of group quotient of $T$ by $G = F\sd
\langle t \rangle$. Since $F$ is normal in $G$, $T$ is a minimal tree for
$F$, and $\mathbb{Y}= F\backslash T$ is a graph-of-groups decomposition of $F$ whose underlying graph  is its own core, and since its genus is
bounded by the rank of $F$, it is finite. 
It follows 
that every vertex group (resp. edge group) in $\mathbb{X}$ is the suspension of a vertex group  (resp. edge group) in $\mathbb{Y}$:  lift the vertex in
$T$,  where  its $\langle t\rangle $-orbit passes twice on a pre-image
of a vertex in $\mathbb{Y}$, thus yielding the suspension (see Lemma
\ref{lem;vert_are_susp} for more details).  Since edge groups in
$\mathbb{X}$ are cyclic,  edge groups in $\mathbb{Y}$
must be trivial.  Therefore $\mathbb{Y}$ is a free decomposition of $F$,
and its vertex groups are of finite type. Going back to $\mathbb{X}$
again,   vertex groups
of $\mathbb{X}$ are suspensions of infinite, finitely generated groups, hence cannot be free nor surface groups, because 
finitely generated normal subgroups of free groups (or surface groups) are of
finite index or trivial.

To present these 
arguments,  the formalism of automorphisms of
graph of groups is to be recalled,  in a rather precise way in
order to be useful. The confident reader may skip this part (section \ref{sec;prelim}), and only
retain that the small modular group is the group of automorphisms
generated by Dehn twists on edges of a graph of groups.

I am grateful to the referee for useful comments.

\section{Preliminary on automorphisms of graphs-of-groups}\label{sec;prelim}

\subsection{Trees and splittings}

We fix our formalism for graphs and graph of groups. This material is
classical, and can be found in Serre's book \cite{Serre}. A graph is a tuple $X=(V, E, \bar{ },
o, t)$
where $V$ is a set (of vertices), $E$ is  a set (of oriented edges)
and $\bar{ } :E\to E$, $o:E\to V$, $t:E\to V$ verify $(\bar{ } \circ \bar{
  }) = Id_E$ and $(o\circ \bar{ }) = t$.
A graph-of-groups $\mathbb{X}$  consists of a graph $X$, for each vertex $v$ of $X$,  a group $\Gamma_v$, for each unoriented edge $\{e,\bar e\}$ of $X$, a group $\Gamma_{\{e, \bar e\}}$ (but we will write $\Gamma_e= \Gamma_{\bar e}$ for it), and for each oriented edge $e$ of $X$, a injective homomorphism $i_e:\Gamma_e \to \Gamma_{o(e)}$, where $o(e)$ is the origin vertex of the oriented edge $e$.

The Bass group $B(\bbX)$ is $$B(\bbX)= \langle (\bigcup_{v\in V}
\Gamma_v) \cup E\; |  \; \forall e \in E, \forall g\in \Gamma_e, \;
\bar e = e^{-1}, \,   e^{-1}  i_{\bar e} (g)  e = i_e(g) \rangle.$$ 

An element $g_0e_1g_1e_2\dots g_ne_ng_{n+1}$ is a {\it path element} from $o(e_1)$ to $t(e_n)$  if $g_{i-1}\in o(e_i)$ and $g_i \in t(e_i)$ for all $i$.

The {\it fundamental group} $\pi_1(\bbX, v_0)$ of the graph-of-groups
$\bbX$ {\it at the vertex}  $v_0$,  is the subgroup of the Bass group
consisting of path elements from $v_0$ to $v_0$.

Choose a maximal subtree $\tau$ of the graph $X$, and consider
$\pi_1(\bbX, \tau) = B(\bbX)/\langle\langle e \in \tau
\rangle\rangle $. Then the
quotient map $ B(\bbX) \to \pi_1(\bbX, \tau)$ is, in restriction to 
$\pi_1(\bbX, v_0)$, an isomorphism.

A {\it $G$-tree} is a simplicial tree with a simplicial action of $G$
without inversion. It is minimal if there is no proper invariant
subtree. It is reduced if the stabilizer of any edge is a proper
subgroup of the stabilizer of any adjacent vertex. 

The quotient of a $G$-tree by $G$ is naturally marked by the family of
conjugacy classes of stabilizers of vertices and edges, and inherits a
structure of a graph-of-groups  (whose
fundamental group is isomorphic to $G$). 

The {\it Bass-Serre tree} of a graph-of-groups is its universal covering in the sense of graphs of groups. It is a $\pi_1(\bbX, v_0)$-tree.

A {\it collapse} of a $G$-tree $T$ is a $G$-tree $S$ with an equivariant map from $T$ to $S$ that sends each edge on an edge or a vertex and no pair of edges of  $T$ in different orbits are sent in the same edge (but they may be sent on the same vertex). A collapse of an edge $e$ in a graph-of-groups decomposition is a collapse of the corresponding Bass-Serre tree in which only the edges in the orbit of a preimage of $e$ are mapped to a vertex.

Given a group $G$, and a class of groups $\calC$,  a {\it splitting} of $G$ over groups in $\calC$ is an isomorphism between $G$ and the fundamental group of a graph of groups whose edge groups are in  $\calC$. Equivalently, a splitting of $G$ can be though of as an action of $G$ on a tree, with edge stabilizers in $\calC$. A splitting is called reduced if, in the tree,  there is no edge whose stabilizer equals that of an adjacent vertex.

\subsection{Automorphisms, and the small modular group $Mod_{\bbX}$} \label{sec;autom_gog}

Let $\bbX$ and $\bbX'$ be two graphs-of-groups. An isomorphism of graphs-of-groups $\Phi:\bbX\to \bbX'$ is a tuple $(\Phi_X, (\phi_v), (\phi_e), (\gamma_e))$ such that 
\begin{itemize}
\item $\Phi_X : X \to X'$ is an isomorphism of the underlying graphs, 
\item for all vertex $v$, $\phi_v:\Gamma_v \to \Gamma_{\Phi_X(v)}$ is an isomorphism, for all edge $e$, $\phi_e=\phi_{\bar e} :\Gamma_e \to \Gamma_{\Phi_X(e)}$ is an isomorphism, 
\item and for each edge $e$, $\gamma_e \in  \Gamma_{\Phi_X(t(e))}$
  satisfying,  for $v=t(e)$, $$\phi_v\circ i_e = \ad_{\gamma_e}\circ
  i_{\Phi_X(e)} \circ \phi_{e}, $$ for  $(\ad_{\gamma_e} : x\mapsto
\gamma_e^{-1} x\gamma_e)$ 
 the inner automorphism of
$\Gamma_{\Phi_X(t(e))}$ defined by the conjugacy by $\gamma_e$.
\end{itemize}

The last point is the commutation of the following diagram (for each
edge $e$): 
\begin{equation}\label{diagram;Bass} \begin{array}{ccc} \Gamma_v &
    \stackrel{\phi_v}{\rightarrow} \Gamma_{\Phi_X(v)}  \stackrel{\ad_{\gamma_e} }{\leftarrow} &  \Gamma_{\Phi_X(v)}  \\    \uparrow &  &  \uparrow \\
\Gamma_e & \stackrel{\phi_e}{\longrightarrow}   & \Gamma_{\Phi_X(e)}  \end{array}\end{equation}

When  $\bbX =\bbX'$ the isomorphisms can be composed in a natural way (see
\cite[2.11]{Bass}: $(\Phi_X, (\phi_v), (\phi_e), (\gamma_e))  \circ
(\Psi_X, (\psi_v), (\psi_e), (\eta_e))  = (\Phi_X\circ \Psi_X,
(\phi_{\Psi_X(v)}\circ \psi_v), (\phi_{\Psi_X(e)}\circ \psi_e),
(\phi_{\Psi_X(t(e))} (\eta_e)\gamma_{\Psi_X(e)}))   $ ), thus
providing  the automorphism group of the graph-of-groups $\bbX$, denote by $\delta\Aut(\bbX)$.

 This group $\delta\Aut(\bbX)$ naturally maps
into the automorphism group of the Bass group $B(\bbX)$ in the
following way: for all edge $\epsilon$, and all automorphism $\Phi =
(\Phi_X, (\phi_v), (\phi_e), (\gamma_e)) \in
  \delta\Aut(\bbX)$,  one has $\Phi(\epsilon) =  \gamma_{\bar
    \epsilon}^{-1} \Phi_X(\epsilon)
  \gamma_\epsilon$, and $\Phi|_{\Gamma_v} = \phi_v$. 
One can check that, for any $\Phi$,  the relations of the Bass group are
  preserved, 
 and that the thus induced morphism is bijective. 

\begin{remark} For this argument, see \cite[2.1,
 2.2]{Bass}, but notice that Bass chose to let conjugations act on the
 left, while we chose to let them act on the right (as in
 \cite{DGu_gafa}). This difference yields a few harmless inversions in
 the formulae, (actually the attentive reader may have spotted them
 already in the relations of the Bass group, \cite[1.5 (1.2)
]{Bass}).
 and the only risk here is to mix both (incompatible) choices.
\end{remark}

Each automorphism in $\delta\Aut(\bbX)$ sends path elements to path
elements,  hence naturally provides an outer-automorphism of
$\pi_1 (\bbX, v_0)$ (and a genuine  automorphism if $\Phi_X(v_0) =
v_0$), and we will often implicitly make this identification.

Let us define the {\it small modular group} of $\bbX$, denoted by
$Mod_{\bbX}$, to be the subgroup of  $\delta\Aut(\bbX)$ consisting of
elements of the form $(Id_X, (\phi_v), (Id_{\Gamma_e}),
(\gamma_e))$, for $\phi_v\in \Inn \Gamma_v$ inner automorphisms. One can check, using the
composition rule,  that this
forms a subgroup of $\delta \Aut (\bbX)$.

If we note 
$\phi_v= \ad_{\gamma_v}$, then   the compatibility
condition (\ref{diagram;Bass})  imposes that $\gamma_v \gamma_e^{-1}  \in
Z_{\Gamma_{t(e )}}(i_e (\Gamma_e))$ for all edge $e$ (where
$Z_{\Gamma_{t(e )} }( i_e (\Gamma_e))$ denotes the centralizer of the attached edge
group $   i_e(\Gamma_e)   $ in the vertex group $\Gamma_{t(e )}$).

If one chooses a generating set $S_e$ for each group  $Z_{\Gamma_{t(e
    )}}(i_e (\Gamma_e))$, then the small modular group is generated by 
the union of two collections of elements. First, the collection of {\it oriented  Dehn twists}
 $\{D_{\epsilon ,\gamma}, \, \epsilon\in E, \gamma\in S_\epsilon\}$,
 defined by $D_{e ,\gamma} = (Id_X, (Id_v), (Id_e), (x_e))$ for $x_\epsilon =
 \gamma$ and $x_e = 1$ for all $e\neq \epsilon$. Second, the collection of {\it inert twists}, $\{  (Id_X, (\ad_{\gamma_v})),
 (Id_e), (\gamma_e =\gamma_{t(e)})  ) \}$ (note that it is the same
 family of elements involved as $(\gamma_e)$ and in defining the
 $(\phi_v)$, and that this defines an element of $Mod_{\bbX}$).

The inert twists are not so interesting. They correspond to changing
some choice of fundamental domain in the Bass-Serre tree. In
particular:

\begin{lemma}
 Any inert twist vanishes in $\Out(\pi_1(\bbX, v_0))$.
\end{lemma}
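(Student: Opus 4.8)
The plan is to compute the automorphism of $\pi_1(\bbX,v_0)$ induced by an inert twist and to exhibit it as conjugation by a single group element. Fix an inert twist $\Phi=(Id_X,(\ad_{\gamma_v}),(Id_e),(\gamma_e=\gamma_{t(e)}))$, determined by a choice $\gamma_v\in\Gamma_v$ at every vertex. Using the description of the map $\delta\Aut(\bbX)\to\Aut(B(\bbX))$ recalled above, $\Phi$ acts on a vertex group by $\ad_{\gamma_v}(g)=\gamma_v^{-1}g\gamma_v$ and on an oriented edge $\epsilon$ by $\Phi(\epsilon)=\gamma_{\bar\epsilon}^{-1}\,\epsilon\,\gamma_\epsilon$. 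Since $\gamma_\epsilon=\gamma_{t(\epsilon)}$ and $\gamma_{\bar\epsilon}=\gamma_{t(\bar\epsilon)}=\gamma_{o(\epsilon)}$, this reads $\Phi(\epsilon)=\gamma_{o(\epsilon)}^{-1}\,\epsilon\,\gamma_{t(\epsilon)}$.

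I would then apply $\Phi$ to an arbitrary element of $\pi_1(\bbX,v_0)$, namely a loop path element $p=g_0e_1g_1\cdots e_ng_n$ based at $v_0$. Writing $u_0=o(e_1)$, $u_i=t(e_i)=o(e_{i+1})$ and $u_n=t(e_n)$ for the vertices traversed, one has $g_i\in\Gamma_{u_i}$, and since $p$ is a loop, $u_0=u_n=v_0$. As $\Phi$ is a homomorphism of $B(\bbX)$, substituting its value on each letter gives
\[
\Phi(p)=\bigl(\gamma_{u_0}^{-1}g_0\gamma_{u_0}\bigr)\bigl(\gamma_{u_0}^{-1}e_1\gamma_{u_1}\bigr)\bigl(\gamma_{u_1}^{-1}g_1\gamma_{u_1}\bigr)\bigl(\gamma_{u_1}^{-1}e_2\gamma_{u_2}\bigr)\cdots\bigl(\gamma_{u_n}^{-1}g_n\gamma_{u_n}\bigr).
\]
The decisive point is that this product telescopes: at each vertex $u_i$ along the path the factor $\gamma_{u_i}$ produced on the right of one letter is immediately cancelled by the $\gamma_{u_i}^{-1}$ produced on the left of the next, so all interior occurrences disappear and
\[
\Phi(p)=\gamma_{u_0}^{-1}\,(g_0e_1g_1\cdots e_ng_n)\,\gamma_{u_n}=\gamma_{v_0}^{-1}\,p\,\gamma_{v_0}.
\]
Hence $\Phi$ acts on $\pi_1(\bbX,v_0)$ as conjugation by the single element $\gamma_{v_0}\in\Gamma_{v_0}\subseteq\pi_1(\bbX,v_0)$, which is inner; therefore $\Phi$ vanishes in $\Out(\pi_1(\bbX,v_0))$.

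Since the whole argument is one telescoping identity, there is no deep obstacle; the only points demanding care are bookkeeping ones. First, one must respect the right-handed conjugation convention $\ad_\gamma:x\mapsto\gamma^{-1}x\gamma$ flagged in the Remark, together with the correct placement $\Phi(\epsilon)=\gamma_{\bar\epsilon}^{-1}\epsilon\,\gamma_\epsilon$ of the two endpoint decorations of an edge; with the opposite convention the interior factors would fail to cancel. Second, the closure condition $u_0=u_n=v_0$ is essential: it is exactly what makes $p$ an element of $\pi_1(\bbX,v_0)$ and what collapses the surviving boundary terms $\gamma_{u_0}^{-1}(\cdot)\gamma_{u_n}$ into a genuine conjugation; for a non-closed path element one would obtain only a conjugation-up-to-endpoints, which is why the statement is phrased for the fundamental group rather than for all of $B(\bbX)$.
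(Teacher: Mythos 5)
Your proof is correct and follows essentially the same telescoping computation as the paper; the only cosmetic difference is that the paper first normalizes $\phi_{v_0}=Id_{\Gamma_{v_0}}$ by a global conjugation so that the telescoping yields $\Phi(w)=w$ on the nose, whereas you keep the boundary terms and identify the result explicitly as conjugation by $\gamma_{v_0}$. Your version has the small virtue of exhibiting the inner automorphism explicitly rather than invoking a preliminary normalization.
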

\begin{proof}
Consider an inert twist $\Phi$. After conjugation over the whole group, we may assume that $\phi_{v_0}
= Id_{\Gamma_{v_0}}$. Then take a path element in the Bass group,
$g_0e_1 \dots g_n e_n g_{n+1}$ that is a loop from $v_0$ to $v_0$. 
  Each $g_i$ is in $\Gamma_{t(e_i)} =
\Gamma_{t(\overline{e_{i+1}})}$.  Thus, $\Phi(g_i) = \ad_{\gamma_i}
(g_i)$, and $\Phi(e_i) =   \gamma_{i-1}^{-1} e_i \gamma_i
$. Concatenation makes everything collapse, and $\Phi(w) = w$.
\end{proof}

We record  how oriented Dehn twists 
are realized as automorphisms of $\pi_1 (\bbX, v_0)$.  The following
is an immediate consequence of the definitions.

\begin{lemma}\label{lem;whataDehntwistdoes} 
Let $\epsilon \in E$ be an oriented edge of $\bbX$, and $\gamma \in
Z_{\Gamma_{t(\epsilon)}}(i(\Gamma_\epsilon))$. 

Let $e$ be an oriented edge different from  $ \epsilon$ and $\bar \epsilon$.

 The oriented Dehn twist $D_{\epsilon,\gamma}$, seen as an
 automorphism of $B(\bbX)$, is such that 
$D_{\epsilon,\gamma} (\epsilon) =
 \epsilon\gamma$,   $D_{\epsilon,\gamma} (\bar \epsilon) =
 \gamma^{-1} \bar \epsilon$  and $D_{\epsilon,\gamma} (e) = e$. 

Moreover, for all vertex $v$ in $X$, and $\gamma_v \in \Gamma_v$,
$D_{\epsilon,\gamma} (\gamma_v)= \gamma_v$.

\end{lemma}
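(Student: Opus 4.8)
The plan is to unwind the definition of the natural morphism $\delta\Aut(\bbX)\to\Aut(B(\bbX))$ on the single element $D_{\epsilon,\gamma}$; the statement is genuinely immediate once the conventions are pinned down. Recall that this morphism sends $\Phi=(\Phi_X,(\phi_v),(\phi_e),(\gamma_e))$ to the automorphism of $B(\bbX)$ acting by $\Phi(e)=\gamma_{\bar e}^{-1}\,\Phi_X(e)\,\gamma_e$ on each oriented edge $e$ and by $\phi_v$ on each vertex group $\Gamma_v$. For $D_{\epsilon,\gamma}=(Id_X,(Id_v),(Id_e),(x_e))$ the graph isomorphism $\Phi_X$ is the identity, every $\phi_v$ is the identity, and the correction family is $x_\epsilon=\gamma$ with $x_e=1$ for all $e\neq\epsilon$ (this family indeed defines an element of $Mod_{\bbX}$, since $\gamma\in Z_{\Gamma_{t(\epsilon)}}(i_\epsilon(\Gamma_\epsilon))$ makes the compatibility diagram (\ref{diagram;Bass}) commute, as $\ad_{\gamma}$ fixes $i_\epsilon(\Gamma_\epsilon)$; the remaining edges carry the trivial correction).

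The assertion about vertex groups is then instantaneous: because $\phi_v=Id_{\Gamma_v}$ for every $v$, the induced automorphism restricts to the identity on each $\Gamma_v$, so $D_{\epsilon,\gamma}(\gamma_v)=\gamma_v$ for all $v$ and all $\gamma_v\in\Gamma_v$.

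For the action on edges I would plug the correction family into $\Phi(e)=\gamma_{\bar e}^{-1}\,e\,\gamma_e$ (using $\Phi_X=Id$) and distinguish three cases. For $e=\epsilon$ one has $\gamma_\epsilon=\gamma$ while $\gamma_{\bar\epsilon}=x_{\bar\epsilon}=1$ (here $\bar\epsilon\neq\epsilon$, as the graph has no inversions), giving $D_{\epsilon,\gamma}(\epsilon)=\epsilon\gamma$. For $e=\bar\epsilon$ the indices swap: $\gamma_{\overline{\bar\epsilon}}=\gamma_\epsilon=\gamma$ and $\gamma_{\bar\epsilon}=x_{\bar\epsilon}=1$, so $D_{\epsilon,\gamma}(\bar\epsilon)=\gamma^{-1}\bar\epsilon$. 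Finally, for any $e\neq\epsilon,\bar\epsilon$ both corrections vanish, $\gamma_e=x_e=1$ and $\gamma_{\bar e}=x_{\bar e}=1$ (since $\bar e\neq\epsilon$), whence $D_{\epsilon,\gamma}(e)=e$.

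The only point requiring care — and the sole place an error could creep in — is the bookkeeping of the right-handed conjugation convention and of the two indices $\gamma_{\bar e}$ versus $\gamma_e$ appearing in the edge formula, precisely the harmless inversions flagged in the Remark following diagram (\ref{diagram;Bass}). There is no real obstacle beyond matching these conventions correctly; once $\bar\epsilon\neq\epsilon$ is noted so that the $\epsilon$- and $\bar\epsilon$-corrections do not interact, the three computations are direct substitutions.
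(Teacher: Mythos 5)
Your proof is correct and follows exactly the route the paper intends: the paper gives no written proof (it states the lemma as ``an immediate consequence of the definitions''), and your argument is precisely the direct substitution of the family $x_\epsilon=\gamma$, $x_e=1$ ($e\neq\epsilon$) into the formula $\Phi(e)=\gamma_{\bar e}^{-1}\Phi_X(e)\gamma_e$ together with $\Phi|_{\Gamma_v}=\phi_v$. The case bookkeeping for $\epsilon$, $\bar\epsilon$, and the remaining edges is handled correctly under the paper's right-conjugation convention.
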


One should be nonetheless cautious with the interpretation of
$D_{\epsilon,\gamma}$ as an automorphism of $\pi_1(\bbX, v_0)$, since
the identification of a preferred copy of $\Gamma_v$ in $\pi_1(\bbX,
v_0)$ is subject to a choice of path from $v_0$ to $v$. If this path
contains a (or several) copy of the edge $\epsilon$, then  $D_{\epsilon,\gamma}$ is
actually conjugating, on the right,  $\Gamma_v$ by $\gamma^{-1}$ (or a power of it).

\section{The $\Aut(G)$-orbit of the fiber of a suspension}

\subsection{An orbit problem for  the small modular group} \label{sec;orbit_Mod}

In this section, we discuss an orbit problem for $Mod_{\bbX}$ in
$H_1(G)$, the abelianisation of $G$. For that, we note that
$Mod_{\bbX}$ naturally maps into $\Aut(H_1(G)) \simeq GL(H_1(G))$.  We
will denote by $\bar{ } \, : G\to H_1(G)$ the abelianisation map.  Let
us also choose $E^+ \subset E$ a set of representatives of unoriented edges.

\begin{prop}\label{prop;Mod_ab}

 Let $G= \pi_1( \bbX,v_0)$. 
 The image of  $Mod_{\bbX}$ in $GL(H_1(G))$ is abelian,
 generated by transvections.
\end{prop}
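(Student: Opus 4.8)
The plan is to reduce everything to understanding how a single oriented Dehn twist $D_{\epsilon,\gamma}$ acts on $H_1(G)$, and then to observe that each such map is a transvection whose ``direction'' and ``functional'' are arranged so that any two of them commute. First I would cut down the generating set: the action of $Mod_{\bbX}$ on $H_1(G)$ factors through $\Out(G)$, since inner automorphisms act trivially on the abelianisation. By the lemma asserting that inert twists vanish in $\Out(\pi_1(\bbX,v_0))$, these contribute nothing, so the image of $Mod_{\bbX}$ in $GL(H_1(G))$ is generated by the images of the oriented Dehn twists $D_{\epsilon,\gamma}$ (with $\epsilon\in E$, $\gamma\in Z_{\Gamma_{t(\epsilon)}}(i_\epsilon(\Gamma_\epsilon))$). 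It then suffices to show that these images are transvections and that they commute pairwise.

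Second, I would introduce, for each oriented edge $\epsilon$, the \emph{algebraic crossing number} $n_\epsilon:G\to\bbZ$, defined on a path element $w=g_0e_1g_1\cdots e_ng_{n+1}$ as the signed count of occurrences of $\epsilon$ among the $e_i$ (with $\bar\epsilon$ counted negatively). The key structural claim is that $n_\epsilon$ is a homomorphism that vanishes on every vertex group. Indeed it is the composite $G\to\pi_1(X,v_0)\to H_1(X;\bbZ)\to\bbZ$, where the first map collapses all vertex groups (it respects the Bass relations, since $e^{-1}i_{\bar e}(g)e=i_e(g)$ becomes trivial once vertex elements are killed) and the last arrow is the coefficient of $\epsilon$. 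Consequently $n_\epsilon$ descends to a linear functional on $H_1(G)$, still written $n_\epsilon$, and $n_e(\bar\gamma)=0$ for every edge $e$ and every vertex-group element $\gamma$, because such a $\gamma$ maps to $1$ in $\pi_1(X,v_0)$.

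Third I would compute the action. By Lemma \ref{lem;whataDehntwistdoes}, $D_{\epsilon,\gamma}$ sends $\epsilon\mapsto\epsilon\gamma$, $\bar\epsilon\mapsto\gamma^{-1}\bar\epsilon$, and fixes the other edge letters and all vertex elements; abelianising a path element gives
\[
\overline{D_{\epsilon,\gamma}(w)}=\bar w+n_\epsilon(w)\,\bar\gamma,
\]
so on $H_1(G)$ the map $D_{\epsilon,\gamma}$ acts by $x\mapsto x+n_\epsilon(x)\bar\gamma$. Since $n_\epsilon(\bar\gamma)=0$, this is a genuine (unipotent) transvection. Finally, for two such maps $T\colon x\mapsto x+n_\epsilon(x)\bar\gamma$ and $T'\colon x\mapsto x+n_{\epsilon'}(x)\bar{\gamma'}$, expanding $TT'$ and $T'T$ shows that the two cross-terms are proportional to $n_\epsilon(\bar{\gamma'})$ and $n_{\epsilon'}(\bar\gamma)$ respectively; both vanish because $\bar\gamma$ and $\bar{\gamma'}$ are images of vertex elements. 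Hence $TT'=T'T$, and the image of $Mod_{\bbX}$ is abelian and generated by transvections.

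The main obstacle is the second step: verifying cleanly that $n_\epsilon$ is well defined as a homomorphism and that it kills every vertex group. This single fact is what makes each $D_{\epsilon,\gamma}$ a true transvection rather than a general shear, \emph{and} what forces commutativity. It also explains why the conjugation subtleties flagged after Lemma \ref{lem;whataDehntwistdoes} are harmless here: passing to $H_1(G)$ collapses all conjugation, so the ambiguity in the embedding of vertex groups into $\pi_1(\bbX,v_0)$ does not affect any of the formulas above.
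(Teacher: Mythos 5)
Your proof is correct and follows essentially the same route as the paper: reduce to oriented Dehn twists via the vanishing of inert twists, read off the action on $H_1(G)$ from the normal form as $\bar w\mapsto \bar w+n_\epsilon(w)\bar\gamma$, and derive both the transvection property and commutativity from the single fact that the twisting element lies in a vertex group and hence has vanishing crossing numbers. Your explicit verification that $n_\epsilon$ is a well-defined homomorphism (by factoring through the fundamental group of the underlying graph) is a detail the paper leaves implicit, but it does not change the argument.
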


\begin{proof} Since inert twists vanish in $\Out(G)$,  the images of
  oriented Dehn twists, generate the image of  $Mod_{\bbX}$ in
  $GL(H_1(G)$. It suffice to show that oriented Dehn
  twists   induce
  transvections on $H_1(G)$ that commute.

Any element $\gamma$ of $\pi_1(\bbX,v_0)$ has an expression as a
normal form coming from the ambient Bass group: $\gamma =
g_0e_0g_1e_1\dots e_m g_{m+1}$ where for all $i$, $e_i \in E$
and $g_{i+1}\in \Gamma_{t(e_i)}, \, g_0, g_{m+1}\in \Gamma_{v_0}$.

The
normal form of $\gamma$ turns, in $H_1(G)$,  into $\bar{\gamma} = g_0\dots g_{m+1}
\times \prod_{e\in E^+ } e^{n(\gamma,e)}$ where $n(\gamma,e)$ is the number of occurrences of $e$ in $(e_0,\dots,e_m)$ minus the number of occurrences of $\bar e$. 

If $D_{\epsilon,h}$ is the Dehn twist of $h$ on $\epsilon$, the
induced automorphism on $H_1(G)$ is denoted by $\overline{D_{\epsilon,h}}$.
From the expression of $D_{\epsilon,h}(e)$ in
\ref{lem;whataDehntwistdoes},  
  $\overline{D_{\epsilon,h}}$ is the following transvection   
 $$ \overline{D_{\epsilon,h}} (\bar \gamma) = \left(  g_0  g_1
   \dots  g_{m+1}\right)  \times \left( \prod_{e\in E^+ }
   e^{n(\gamma, e)} \right)  \times \bar h^{n(\gamma,\epsilon)}
 \; = \; \bar \gamma \times \bar h^{n(\gamma,\epsilon)}. \qquad (*)
$$

Since $h\in \Gamma_{t(\epsilon)}    $, the element
$h^{n(\gamma,\epsilon)}$ is in a vertex group, and it is fixed (if
seen in the Bass group) or conjugated (if seen in $G$) by all
oriented Dehn twists on edges, thus oriented Dehn twists on edges commute in the abelianisation.
\end{proof}

\begin{rem}\label{rem;Mod_ab}
In fact, if  $Z_{\Gamma_{t(e)}}(i_e(\Gamma_e)  )$ is abelian
(which is the case if $G$ torsion free hyperbolic), then
$Mod_{\bbX}$  is abelian itself.
 \end{rem}

We keep the same notations. Note that the group $H_1(G)/ \bar F$ is infinite cyclic, by
assumption, and is  generated by the image of $\bar t$. Define
$\bar{\bar{ }}:G\to H_1(G)/\bar F$ and for all $\gamma \in G$, define
$\delta(\gamma)$ to be the unique integer such that $\bar{\bar{\gamma}} = \bar{\bar{t}}^{\delta(\gamma)}$.

\begin{prop}\label{prop;orbit_fiber}
  Let $G$ be a finitely generated group that can be expressed as a
  semi-direct product $F \sd \langle t\rangle$.

 Given a splitting $\bbX$ of $G$, and for each $e\in E$, a generating  set $S_e\subset \Gamma_{t(e)}$ for $ Z_{\Gamma_{t(e)}} ( i_e
 (\Gamma_e)) $
 and a family
 $(\gamma_j)_{0\leq j\leq j_0}$ of elements of $G$, one can decide
 whether there is an element $\eta \in Mod_{\bbX}$ whose image
 $\bar{\eta} $ in $\Aut(H_1(G))$  sends $\bar \gamma_j$ in $\bar F$
 for all $j<j_0$ and $\bar \gamma_{j_0}$  inside $\bar t \bar F$.

More precisely, there is such an element $\eta$ if and only if the
explicit Diophantine linear system of equations 
\begin{equation} \label{eq;Dioph_lin}  \forall j, \sum_{e\in E^+\\ s_e \in S_e}  r_{s_e}
  n(\gamma_j,e) \delta(s_e) = -\delta(\gamma_j) \, + \,  {\rm dirac}_{j=j_0}    \end{equation}
 (with unknowns
$r_{s_e}$) has a solution. 
\end{prop}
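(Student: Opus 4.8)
The plan is to translate both orbit conditions into conditions on the homomorphism $\delta$ defined just before the statement. Since $H_1(G)/\bar F\cong\mathbb{Z}$ is generated by $\bar{\bar t}$, asking $\bar\eta(\bar\gamma_j)\in\bar F$ is the same as $\delta(\eta(\gamma_j))=0$, and $\bar\eta(\bar\gamma_{j_0})\in\bar t\bar F$ the same as $\delta(\eta(\gamma_{j_0}))=1$; so everything reduces to understanding the action of $Mod_{\bbX}$ on $\delta$. First I would note that inner automorphisms act trivially on $H_1(G)$, so the action of $Mod_{\bbX}$ on $H_1(G)$ factors through $\Out(G)$; by the Lemma saying that inert twists vanish in $\Out(\pi_1(\bbX,v_0))$, the image of $Mod_{\bbX}$ in $GL(H_1(G))$ is generated by the transvections $\overline{D_{\epsilon,s}}$ of the oriented Dehn twists ($\epsilon\in E$, $s\in S_\epsilon$), and by Proposition \ref{prop;Mod_ab} these commute. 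Hence every $\eta\in Mod_{\bbX}$ acts on $H_1(G)$ as a product $\prod\overline{D_{\epsilon,s}}^{\,r_{\epsilon,s}}$.

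Next I would compute the effect of such a product on $\delta$. The key observation, read from Lemma \ref{lem;whataDehntwistdoes}, is that a Dehn twist only inserts vertex-group elements and therefore leaves each edge-count $n(\cdot,e)$ unchanged; together with formula $(*)$ and the fact that $\delta$ is a homomorphism this gives $\delta(\eta(\gamma))=\delta(\gamma)+\sum_{\epsilon\in E,\,s\in S_\epsilon}r_{\epsilon,s}\,n(\gamma,\epsilon)\,\delta(s)$. Grouping the two orientations of each geometric edge via $n(\gamma,\bar e)=-n(\gamma,e)$, this rewrites as $\delta(\eta(\gamma))=\delta(\gamma)+\sum_{e\in E^+}C_e\,n(\gamma,e)$, where $C_e$ is an integer combination of the $\delta(s)$, $s\in S_e$, and of the $-\delta(h)$, $h\in S_{\bar e}$. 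The conditions $\delta(\eta(\gamma_j))={\rm dirac}_{j=j_0}$ thus read $\sum_{e\in E^+}C_e\,n(\gamma_j,e)=-\delta(\gamma_j)+{\rm dirac}_{j=j_0}$. The ``if'' direction is then immediate: a solution $(r_{s_e})$ of (\ref{eq;Dioph_lin}) produces the genuine element $\eta=\prod_{e\in E^+,\,s_e\in S_e}D_{e,s_e}^{\,r_{s_e}}\in Mod_{\bbX}$, for which $C_e=\sum_{s_e}r_{s_e}\delta(s_e)$ and the required identities hold.

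The main obstacle is the converse, namely that the coefficient $C_e$ available to an arbitrary $\eta$ can already be realized by twisting along the single orientation $e\in E^+$. Concretely one must show $\delta(Z_{\bar e})\subseteq\delta(Z_e)$, where $Z_e=Z_{\Gamma_{t(e)}}(i_e(\Gamma_e))$ and $Z_{\bar e}=Z_{\Gamma_{o(e)}}(i_{\bar e}(\Gamma_e))$, so that all attainable $C_e$ lie in $\langle\delta(s):s\in S_e\rangle$. My approach is geometric: in the Bass--Serre tree the edge group $A$ fixes an edge with endpoint stabilizers the two vertex groups, $Z_G(A)$ preserves the subtree $\mathrm{Fix}(A)$ which is bounded by acylindricity of the relevant splitting, and the Bass relation $\epsilon\,i_\epsilon(\Gamma_\epsilon)\,\epsilon^{-1}=i_{\bar\epsilon}(\Gamma_\epsilon)$ identifies the two sides. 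Acting on a bounded tree, $Z_G(A)$ either fixes the edge --- whence $Z_e=Z_{\bar e}=Z_G(A)$ --- or contains an element swapping its two endpoints --- whence $Z_e$ and $Z_{\bar e}$ are conjugate; in either case $\delta$, being conjugation-invariant, has the same image on both, which is what is needed. I expect this centralizer coincidence to be the only delicate point, the rest being bookkeeping.

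Finally, for the decidability assertion I would check that all data are effective: the integers $n(\gamma_j,e)$, $\delta(\gamma_j)$ and $\delta(s_e)$ are read off from the homology classes $\bar\gamma_j$ and $\bar s_e$ in $H_1(G)$, computable from a presentation of $G$ and the splitting $\bbX$. The existence of $\eta$ is thereby equivalent to solvability of the explicit linear Diophantine system (\ref{eq;Dioph_lin}) in the unknowns $r_{s_e}$, which is decidable by standard integer linear algebra. (If one prefers not to invoke the centralizer coincidence, keeping both orientations as separate unknowns leaves the system linear Diophantine and still yields decidability, at the cost of the clean form (\ref{eq;Dioph_lin}).)
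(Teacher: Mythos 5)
Your core argument is the same as the paper's: reduce both conditions to values of $\delta$, use formula $(*)$ and the commutation of the transvections $\overline{D_{\epsilon,s}}$ to get $\delta(\eta(\gamma))=\delta(\gamma)+\sum r_{\epsilon,s}\,n(\gamma,\epsilon)\,\delta(s)$, and conclude by decidability of linear Diophantine systems. The ``if'' direction and the effectivity discussion match the paper exactly.

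The divergence is at the point you yourself single out as delicate, and there your argument does not work. To fold the twists along $\bar e$ (for $e\in E^+$) into unknowns indexed by $S_e$ alone you need $\delta(Z_{\bar e})\subseteq\delta(Z_e)$, and your geometric justification fails for three reasons: the Proposition is stated for an arbitrary splitting of an arbitrary finitely generated $F\sd\langle t\rangle$, so there is no acylindricity to invoke; $G$-trees here are actions without inversion, so no element of $Z_G(A)$ can swap the two endpoints of the edge, which kills one branch of your dichotomy; and in the other branch a global fixed point of $Z_G(A)$ in the fixed subtree of $A$ need not lie on the edge, so $Z_e=Z_{\bar e}=Z_G(A)$ does not follow (if $Z_G(A)$ fixed the edge it would be contained in $A$, hence in $Z(A)$ --- a very special situation). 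The inclusion can genuinely fail for a general splitting, e.g.\ an amalgam over $\langle a\rangle$ where the centralizer on one side contains a square root of $a$ and on the other side is $\langle a\rangle$ itself. Your parenthetical fallback --- keeping unknowns for both orientations, those attached to $S_{\bar e}$ entering with coefficient $-n(\gamma_j,e)\delta(s)$ --- is the correct move: it gives the decidability assertion in full generality and is how the displayed system should really be read. For what it is worth, the paper's own proof silently performs the same reduction to $E^+$ that worries you; in the intended application, where $\bbX$ is the JSJ of a hyperbolic group and edge groups are maximal virtually cyclic, both centralizers reduce to the conjugate centers of $i_e(\Gamma_e)$ and $i_{\bar e}(\Gamma_e)$, so $\delta(Z_e)=\delta(Z_{\bar e})$ and the clean form of the system is recovered there.
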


\begin{proof}

 By $(*)$ (from the proof of Proposition  \ref{prop;Mod_ab}),
$\delta(D_{\epsilon,h}(\gamma)) = \delta(\gamma) + n(\gamma,\epsilon)
\delta(h)$ and by induction,  and
the fact that $\bar h$ is fixed by all Dehn twists, $$\delta(D_{\epsilon_1,h_1} D_{\epsilon_2,h_2}\dots D_{\epsilon_k,h_k}  (\gamma)) = \delta(\gamma) + \sum_{i=1}^k n(\gamma,\epsilon_i) \delta(h_i).$$

Assume that there exists $\eta \in Mod_{\bbX}$ such that $\delta(\eta
(\gamma_j)) =0$  for $j<j_0$ and $=1$ for $j=j_0$. Since oriented Dehn twists
 generate the image of $Mod_\bbX$ in $GL(H_1(G))$, this element
 $\eta$ can be chosen as a product of oriented Dehn twists
 $D_{\epsilon_1,h_1} D_{\epsilon_2,h_2}\dots
 D_{\epsilon_k,h_k}$, which by commutation in $\Aut(H_1(G))$ can be
 chosen to be  $$\prod_{e\in E^+, s_e\in S_e}D^{r_{s_e}}_{e,s_e}   .$$ Therefore by the previous equation, 
 $\forall e \in E, \forall s_e \in S_e, \exists r_{s_e} \in
  \mathbb{Z}$  $$\forall j, \sum_{e\in E^+\\ s_e \in S_e}  r_{s_e}
  n(\gamma_j,e) \delta(s_e) = -\delta(\gamma_j) \, + \,  {\rm dirac}_{(j=j_0)}$$
  (in which   ${\rm dirac}_{j=j_0}$ yields $0$ if $j\neq j_0$ and $1$
  otherwise). 

Conversely, if the system of equations  $$\forall j, \sum_{e\in E^+\\ s_e \in S_e}  r_{s_e}
  n(\gamma_j,e) \delta(s_e) = -\delta(\gamma_j) \, + \,  {\rm dirac}_{j=j_0}$$
  has a solution (in unknowns  $r_{s_e}$), then
  $\delta( \prod_{e\in E^+, s_e\in S_e}D^{r_{s_e}}_{e,s_e}     (\gamma_j)) =0$  for $j<j_0$ and $=1$ for
  $j=j_0$.

We have thus reduced the orbit problem to an equivalent problem of
satisfaction of a system of linear Diophantine equations, that are explicitly computable. The problem is therefore solvable, by classical technics of linear algebra.
\end{proof}

If we had the same statement with $\Aut(G)$ replacing
$Mod_{\bbX}$, we would be very close to our conclusion. 
What we
will  show is that $Mod_{\bbX}$ has finite index image in
$\Out(G)$.

\subsection{No surface vertex group in splittings of suspensions} \label{sec;no_surf}

Our aim is Proposition \ref{prop;no_free}, that states that in any splitting of a suspension, there is no vertex whose group  which is a surface group with boundary, where the boundary subgroups are the adjacent edge groups.

More precisely, if $T$ is a $G$-tree, we say that a vertex stabilizer
$H$ is a hanging surface group if it is the fundamental  group of a
non-elementary compact surface  with boundary components, and the
adjacent edge groups are exactly the subgroups of the boundary
components.     We say that it is a hanging bounded Fuchsian group if
there is a finite normal subgroup $K\normal H $ such that $H/K$  is
isomorphic to the fundamental group of a non-elementary hyperbolic compact $2$-orbifold with boundary, by an isomorphism sending the images of the adjacent edge stabilizers 
on the 
 boundary subgroups   of the orbifold group. 
We will  need the following well known fact about bounded
Fuchsian groups, that we give for
completeness.
\begin{lemma}\label{lem;HBFvirt_free}
   Any  hanging bounded Fuchsian group is virtually free.
\end{lemma}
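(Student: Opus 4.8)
The plan is to quotient by the finite kernel and then exploit the free-product structure of orbifold groups with non-empty boundary. By definition of a hanging bounded Fuchsian group there is a finite normal subgroup $K \normal H$ such that $Q := H/K$ is isomorphic to the orbifold fundamental group $\pi_1^{\mathrm{orb}}(O)$ of a non-elementary compact hyperbolic $2$-orbifold $O$ with non-empty boundary. The first step is to record the algebraic shape of $Q$. Since $O$ has non-empty boundary, it deformation retracts onto a $1$-dimensional spine, so no relation of surface type survives, and its orbifold fundamental group is a free product $Q \cong C_1 * \cdots * C_k * F_r$ of finitely many finite groups $C_i$ (the cone points, and possibly corner-reflector groups) with a free group $F_r$ of finite rank. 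In particular $Q$ is finitely generated, and hence so is $H$, because $K$ is finite.

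The second step is to produce a tree action of $H$ with finite stabilizers. The free-product decomposition of $Q$ furnishes a Bass-Serre tree $T$ on which $Q$ acts with vertex stabilizers the conjugates of the $C_i$ together with trivial stabilizers, all finite, and with trivial edge stabilizers. Pulling the action back along the projection $\pi : H \to Q$, the group $H$ acts on the same tree $T$; the stabilizer in $H$ of a vertex is the full preimage $\pi^{-1}(C)$ of a finite vertex stabilizer $C$ of $Q$, which fits in $1 \to K \to \pi^{-1}(C) \to C \to 1$ and is therefore finite (of order $|K|\cdot|C|$). Thus $H$ is a finitely generated group acting on a tree with finite vertex stabilizers, and by the theorem of Karrass--Pietrowski--Solitar such a group is virtually free. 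This gives the conclusion.

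The only non-formal input is the structural description of $Q$ in the first step, namely that the fundamental group of a compact hyperbolic $2$-orbifold \emph{with boundary} is a free product of finitely many finite groups and a finitely generated free group; this is precisely where the boundary hypothesis is essential, since for a closed orbifold the surface relation would persist and virtual freeness would fail. The reductions of the second step are then routine Bass-Serre theory and present no real difficulty. If one prefers to avoid invoking Karrass--Pietrowski--Solitar, the same conclusion can be reached cohomologically: pass to a finite-index free subgroup $F_0 \le Q$, note that $\mathrm{cd}(F_0) \le 1$ forces the extension $1 \to K \to \pi^{-1}(F_0) \to F_0 \to 1$ to split, and then extract inside $K \rtimes F_0$ a finite-index free subgroup by restricting to the kernel of the finite action $F_0 \to \Aut(K)$.
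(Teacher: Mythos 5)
Your argument is correct in substance but takes a genuinely different route from the paper. The paper quotients by $K$, views $H/K$ as a discrete subgroup of $PSL(2,\mathbb{R})\rtimes \mathbb{Z}/2\mathbb{Z}$, invokes Selberg's lemma to extract a torsion-free finite-index subgroup $\bar H_0$, identifies it with the fundamental group of a finite covering surface-with-boundary (hence free), and lifts it back to $H$; your approach instead reads off the combinatorial structure of a boundary orbifold group, pulls the resulting tree action back along $H\to H/K$ to get an action of $H$ on a tree with finite stabilizers, and concludes by Karrass--Pietrowski--Solitar. What your route buys is independence from Selberg's lemma and from any linear realization of $H/K$; what the paper's route buys is brevity and the fact that the only nontrivial input (residual finiteness of finitely generated linear groups) is a single classical citation. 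Your closing ``cohomological'' alternative is in fact essentially the paper's final step: the extension $1\to K\to \pi^{-1}(F_0)\to F_0\to 1$ splits simply because $F_0$ is free (lift a basis), and then $F_0$ itself already sits in $H$ with index $|K|\cdot[H/K:F_0]$ --- the further passage to the kernel of $F_0\to\Aut(K)$ is harmless but unnecessary.

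One inaccuracy to flag, though it does not invalidate the proof: the claim that the orbifold fundamental group of a compact hyperbolic $2$-orbifold with boundary is literally a free product of finite groups with a free group is false in general once mirrors and corner reflectors are allowed (which the definition here, via $H/K$ being an arbitrary compact hyperbolic $2$-orbifold group, does permit). For instance a disk with one genuine boundary arc, mirror arcs, and two corner reflectors of orders $m,n$ has group $D_m *_{\mathbb{Z}/2\mathbb{Z}} D_n$, in which the two maximal finite subgroups intersect nontrivially, so it is not a free product of finite groups and a free group. The correct general statement is that such an orbifold group is the fundamental group of a finite graph of \emph{finite} groups, which is all your second step actually uses: the Bass--Serre tree still has finite vertex stabilizers, the pullback to $H$ still has finite stabilizers, and Karrass--Pietrowski--Solitar still applies. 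So replace ``free product of finite groups with a free group'' by ``fundamental group of a finite graph of finite groups'' and the argument is complete.
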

\begin{proof}
Let $H$ be a  hanging bounded Fuchsian group, and $K$ a finite
subgroup of $H$ as above. 
Because  $H/K$ is the fundamental
 group  of a hyperbolic compact $2$-orbifold, it is  a subgroup of
 $PSL(2,\mathbb{R})\sd \mathbb{Z}/2\mathbb{Z}$, and  by Selberg's lemma, it is residually finite, and it has a finite
 index subgroup $\bar H_0$ without torsion.   This subgroup is the fundamental
 group of a finite cover of the orbifold, which is therefore a surface, with
 boundary, since the orbifold has boundary. Thus, $\bar H_0$ is
 free. It lifts as a free subgroup of $H$, whose
 index in $H$ is the product of indices $      |K|  \times [H/K : \bar H_0]$ .
\end{proof}
 
 We will show (Proposition \ref{prop;no_free}) that a splitting of a
 suspension of a  finitely generated group has no  hanging bounded Fuchsian vertex group.

Let $T$ be the Bass-Serre $G$-tree of a  reduced 
splitting of $G$.   Let us introduce $Y=F \backslash T$ and $X=
G\backslash T$. Both graphs provide respectively graph-of-groups   
decompositions $\bbY$ and $\bbX$ of $F$ and of $G$.

\begin{lemma}
 $Y$ is a finite graph, and the $G$-action on $T$ induces, by
 factorisation,  a $\langle \bar t\rangle $-action on $Y$.
\end{lemma}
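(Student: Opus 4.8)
The plan is to separate the two assertions, disposing of the factorisation of the action first and then concentrating on the finiteness of $Y$, which is the real content. Throughout I take $T$ to be minimal as a $G$-tree, as one may for the Bass--Serre tree of a reduced splitting of a finitely generated group.

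For the action, I would begin from normality: for $g\in G$ and $f\in F$ one has $gfg^{-1}\in F$, so $g$ carries $F$-orbits to $F$-orbits, and the assignment $Fx\mapsto F(gx)$ (on vertices and on edges) is a well-defined automorphism of the quotient graph $Y=F\backslash T$. This gives a homomorphism $G\to\Aut(Y)$. By construction $F$ acts trivially on $F\backslash T$, so this homomorphism kills $F$ and hence factors through $G/F=\langle\bar t\rangle$, yielding the announced $\langle\bar t\rangle$-action on $Y$.

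The finiteness of $Y$ I would reduce to the statement that $T$ is minimal \emph{as an $F$-tree}. Granting $F$-minimality, finiteness is the classical fundamental-domain argument: choosing a finite generating set $s_1,\dots,s_n$ of $F$ and a base vertex $v_0$, the finite subtree $D=\bigcup_i [v_0,s_i v_0]$ has the property that $F\cdot D$ is connected (consecutive translates along a word share a translate of $v_0$) and $F$-invariant, hence equals $T$ by minimality; since $D$ has only finitely many vertices and edges, so does $Y=F\backslash (F\cdot D)$.

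The heart of the proof is therefore establishing $F$-minimality, and this is the step I expect to be most delicate. I would argue that the minimal $F$-invariant subtree $T_F$ exists and is $G$-invariant, so that $G$-minimality of $T$ forces $T_F=T$. Normality gives $G$-invariance at once: for $g\in G$, the tree $gT_F$ is a minimal $gFg^{-1}=F$-invariant subtree, hence equals $T_F$. The point needing care is the \emph{existence} of $T_F$, i.e. that $F$ is not elliptic. Since $F$ is finitely generated, if every element of $F$ were elliptic then $F$ would fix a point (two elliptics with disjoint fixed subtrees have a hyperbolic product, whence finite generation yields a global fixed point); then its fixed subtree $\Fix(F)$, nonempty and $G$-invariant by normality, would be all of $T$ by minimality, so $F$ would lie in the kernel $K$ of the action. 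But then $G/K$ is cyclic, being a quotient of $G/F=\langle\bar t\rangle$: if $G/K$ is finite it fixes a point of $T$, contradicting minimality; if $G/K\cong\bbZ$ then $T$ is the translation axis of a generator, a line on which $G/K$ acts freely, so every vertex and edge stabiliser equals $K=F$, contradicting that the splitting is reduced. Hence $F$ contains a hyperbolic element, $T_F$ is the (unique) union of the axes of such elements, and the argument closes. This dichotomy — and in particular the way reducedness is exactly what rules out the line — is where I would concentrate my attention.
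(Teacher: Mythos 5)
Your proof is correct and follows the same underlying strategy as the paper: normality of $F$ makes the minimal $F$-invariant subtree $G$-invariant, hence equal to $T$, and finiteness of $Y$ then follows from finite generation of $F$; you simply supply details the paper leaves implicit (the non-ellipticity of $F$, ruled out via reducedness, and the explicit fundamental-domain argument where the paper only invokes the genus bound). The only quibble is that in the degenerate subcase where $G/K$ is finite, minimality forces $T$ to be a single vertex rather than yielding a contradiction, but there the lemma holds trivially.
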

\begin{proof} The second part of the statement is obvious since $F$ is
  normal in $G$.
Also, since $F$ is normal in $G$, its minimal subtree in $T$ is $G$-invariant, therefore it is $T$ itself.  
In other words,  $Y$ is its own core. As for its genus, it  it is
finite, since it bounds from below the rank of $F$ (the fundamental group of $\bbY$ at a vertex $v_0$ projects on the fundamental group of the underlying graph $Y$ at $v_0$). It follows that $Y$ is a finite graph, and it is endowed with an action of $\langle \bar t\rangle = G/F$ for which the quotient is $X$. 
\end{proof}

\begin{lemma}\label{lem;vert_are_susp}
  Given any vertex in $T$, its stabilizer in $G$ is a suspension of
  its stabilizer in $F$.      
\end{lemma}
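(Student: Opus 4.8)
The plan is to exhibit $G_{\tilde v} := \mathrm{Stab}_G(\tilde v)$ as a genuine (internal) semi-direct product $F_{\tilde v}\sd \langle s\rangle$, where $F_{\tilde v} := \mathrm{Stab}_F(\tilde v)$ and $s$ is a suitably chosen element of $G_{\tilde v}$ projecting to a generator of an infinite cyclic quotient. First I would record the elementary structural facts. Since $F$ is normal in $G$, the point stabilizer $F_{\tilde v} = G_{\tilde v}\cap F$ is normal in $G_{\tilde v}$, and the inclusions induce
$$G_{\tilde v}/F_{\tilde v} = G_{\tilde v}/(G_{\tilde v}\cap F)\;\cong\; G_{\tilde v}F/F \;\hookrightarrow\; G/F = \langle \bar t\rangle \cong \bbZ.$$
Thus $G_{\tilde v}/F_{\tilde v}$ is either trivial or infinite cyclic; it remains to see that it is infinite cyclic and that the extension splits.

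The key step --- and the only place where finiteness intervenes --- is to produce an element of $G_{\tilde v}$ lying outside $F$. Here I would invoke the previous lemma: $Y = F\backslash T$ is a finite graph carrying an action of $\langle \bar t\rangle$. Let $v$ be the image of $\tilde v$ in $Y$. Because $Y$ is finite, the $\langle\bar t\rangle$-orbit of $v$ is finite; let $m\geq 1$ be its cardinality, so that $\bar t^{m}$ fixes $v$. This says exactly that $t^{m}\tilde v$ lies in the $F$-orbit of $\tilde v$, so there is $f\in F$ with $f\,t^{m}\tilde v = \tilde v$. Set $s = f\,t^{m}\in G_{\tilde v}$. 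Its image in $G/F$ is $\bar t^{m}$, which has infinite order; hence $s$ has infinite order and $\langle s\rangle\cap F_{\tilde v}=\{1\}$. Conversely, for any $g\in G_{\tilde v}$ the image $\bar g = \bar t^{k}$ fixes $v$ in $Y$, forcing $m\mid k$; therefore $G_{\tilde v}F/F = \langle \bar t^{m}\rangle$ and $s$ projects to a generator.

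It then follows that $1\to F_{\tilde v}\to G_{\tilde v}\to \langle\bar t^{m}\rangle\to 1$ is a short exact sequence with free quotient $\cong\bbZ$, and the section sending the generator to $s$ splits it (concretely, any $g\in G_{\tilde v}$ satisfies $\bar g=\bar t^{mk}$ for some $k$, whence $g\,s^{-k}\in G_{\tilde v}\cap F=F_{\tilde v}$). Hence $G_{\tilde v} = F_{\tilde v}\sd \langle s\rangle$ is a suspension of $F_{\tilde v}$, the structural automorphism being $\ad_{s}$ restricted to $F_{\tilde v}$. The only genuine obstacle is the very existence of $s$, i.e. ruling out that $G_{\tilde v}$ be contained in $F$; this is precisely what the finiteness of $Y$ guarantees, since an infinite $\langle\bar t\rangle$-orbit of $v$ would leave $G_{\tilde v}/F_{\tilde v}$ trivial. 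Everything else is the routine splitting of an extension with $\bbZ$ quotient.
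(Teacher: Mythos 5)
Your proof is correct and follows essentially the same route as the paper: finiteness of $Y=F\backslash T$ yields an element $ft^m\notin F$ fixing the vertex, and the stabilizer in $G$ is then identified with $F_{\tilde v}\sd\langle ft^m\rangle$ by reducing modulo $F$. Your phrasing via the short exact sequence $1\to F_{\tilde v}\to G_{\tilde v}\to\langle\bar t^m\rangle\to 1$ is just a cleaner packaging of the paper's direct verification that every element $\eta t^k$ of the stabilizer lies in $(F)_{\nu}\sd\langle\gamma t^n\rangle$.
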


\begin{proof}
Let $\nu$ be such a vertex, and consider $v$ its image in $X$ and  $\tilde v$ its image in  $Y$.
 Because $Y$ is finite, there is a smallest $n>0$ such that $ \bar t^n \tilde v$ is $\tilde v$.  Lifting in $T$, we obtain the existence of   $\gamma \in F$ such that $\gamma t^n \nu = \nu$. Therefore, if  $(F)_{\nu}$ is the stabilizer of $\nu$ in $F$, then it is normalized by $\gamma t^n$ and  $(F)_{\nu}\sd \langle \gamma t^n \rangle$ fixes $\nu$.

We claim that  $(F)_{\nu}\sd \langle \gamma t^n \rangle$  is the stabilizer in $G$ of  $\nu$.

If there is another element in it, it is not in $F$ by definition of $(F)_{\nu}$, and therefore it is some $\eta t^m$, $\eta \in F, m\neq 0$. By minimality of $n$,  $n$ divides $m$, and if $\gamma't^m = (\gamma t^n)^k$, then $\gamma't^m \nu = \eta  t^m \nu $, and $t^{-m}\gamma'^{-1} \eta  t^m $ fixes  $\nu$ and is in $F$ by normality of $F$. Thus it is in  $(F)_{\nu}$, and $\eta t^m \in (F)_{\nu}\sd \langle \gamma t^n \rangle$. This ensures the claim.
\end{proof}

 \begin{lemma}\label{lem;edge_susp}
 For any edge in $T$, its stabilizer in $G$ is a suspension of its
 stabilizer in  $F$.
\end{lemma}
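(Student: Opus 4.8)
The plan is to follow \emph{verbatim} the argument of Lemma \ref{lem;vert_are_susp}, replacing the chosen vertex by the chosen edge; the structural facts used there (normality of $F$ in $G$, finiteness of $Y$, and the induced $\langle \bar t\rangle$-action on $Y$) apply equally well to edges, so the bookkeeping transfers with no essential change.

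First I would fix an edge $\epsilon$ of $T$ and let $\tilde\epsilon$ denote its image in $Y = F\backslash T$. Since $Y$ is a finite graph carrying a $\langle \bar t\rangle$-action, the $\langle \bar t\rangle$-orbit of $\tilde\epsilon$ is finite, so there is a smallest integer $n > 0$ with $\bar t^n \tilde\epsilon = \tilde\epsilon$. Lifting this relation to $T$ produces some $\gamma \in F$ with $\gamma t^n \epsilon = \epsilon$. Writing $(F)_\epsilon$ for the stabilizer of $\epsilon$ in $F$, I would then check that $\gamma t^n$ normalizes $(F)_\epsilon$: for $h \in (F)_\epsilon$ the conjugate $(\gamma t^n) h (\gamma t^n)^{-1}$ lies in $F$ by normality of $F$ in $G$, and it fixes $\epsilon$ because both $\gamma t^n$ and $h$ do. Hence $(F)_\epsilon \sd \langle \gamma t^n\rangle$ is a subgroup of the stabilizer $(G)_\epsilon$ of $\epsilon$ in $G$, having the shape of a suspension of $(F)_\epsilon$.

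Finally I would prove the reverse inclusion $(G)_\epsilon \subseteq (F)_\epsilon \sd \langle \gamma t^n\rangle$ by the same minimality argument as for vertices. Any element of $(G)_\epsilon$ outside $F$ has the form $\eta t^m$ with $\eta \in F$ and $m \neq 0$; its image fixes $\tilde\epsilon$ in $Y$, so $n \mid m$ by minimality of $n$. Writing $(\gamma t^n)^{m/n} = \gamma' t^m$ with $\gamma' \in F$, both $\eta t^m$ and $\gamma' t^m$ fix $\epsilon$, so $\eta\gamma'^{-1} = (\eta t^m)(\gamma' t^m)^{-1} \in F$ fixes $\epsilon$, i.e.\ lies in $(F)_\epsilon$; therefore $\eta t^m = (\eta\gamma'^{-1})(\gamma t^n)^{m/n} \in (F)_\epsilon\sd\langle \gamma t^n\rangle$, which settles the claim.

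The only point requiring care beyond the vertex case — the one I would flag explicitly — is that one must invoke that $G$, and hence $F$ and $\langle t\rangle$, acts on $T$ \emph{without inversion}, as is built into the definition of a $G$-tree. This is what guarantees that $Y = F\backslash T$ is an honest graph on which $\langle \bar t\rangle$ acts (no edge being folded onto its reverse), and that stabilizing $\epsilon$ coincides with fixing it together with its orientation. Consequently no element can stabilize $\epsilon$ merely by exchanging its two orientations, and the orbit and minimality arguments above remain unambiguous; this is the mild obstacle, and it is dispatched by the no-inversion hypothesis rather than by any new computation.
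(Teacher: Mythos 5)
Your argument is correct, and it is in substance the same argument as for Lemma \ref{lem;vert_are_susp}, transplanted from vertices to edges; every step (finiteness of $Y$, the minimal period $n$ of $\tilde\epsilon$ under $\langle\bar t\rangle$, the lift $\gamma t^n$, normality of $F$, and the divisibility argument for the reverse inclusion) goes through as you say. The paper itself avoids rewriting any of this: it equivariantly subdivides the edge at its midpoint and applies Lemma \ref{lem;vert_are_susp} to the new vertex, whose stabilizer is exactly the edge stabilizer. That one-line reduction also silently absorbs the orientation issue you flag at the end (the midpoint vertex has the setwise stabilizer of the edge, which coincides with the edge stabilizer precisely because the action is without inversion), so your explicit appeal to the no-inversion hypothesis is the honest price of doing the argument directly rather than by reduction. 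Both routes are fine; the subdivision trick is just shorter and is worth keeping in your toolbox.
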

\begin{proof}
Subdivide such an edge (and all edges in its orbit, equivariantly) by inserting a vertex on its midpoint, and apply the previous lemma to this vertex.
\end{proof}

Up to now we have not used any assumption on the nature of edge groups. But now two remarks are of interest, and directly follow from the previous lemmas.

\begin{lemma}\label{lem;YfreesplitsF}
A reduced $G$-tree $T$ never has a finite edge stabilizer.

If $T$ has cyclic edge stabilizers in $G$,  then $\bbY$ is  a free splitting of $F$.  

If $T$ has virtually cyclic edge stabilizers in $G$, then $\bbY$ is a splitting of $F$ over finite subgroups.

\end{lemma}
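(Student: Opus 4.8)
The plan is to obtain all three assertions from the suspension structure of edge stabilizers recorded in Lemma~\ref{lem;edge_susp}. For an edge $e$ of $T$, write $(F)_e$ for its stabilizer in $F$; that lemma furnishes $\gamma \in F$ and an integer $n>0$ with $(G)_e = (F)_e \sd \langle \gamma t^n\rangle$. I will lean on two features of this decomposition. First, since $n>0$ the image of $\gamma t^n$ in $G/F \cong \langle t\rangle \cong \bbZ$ is $t^n \neq 1$, so $\gamma t^n$ has infinite order and $\langle \gamma t^n\rangle \cong \bbZ$. Second, $(F)_e = (G)_e \cap F$ is normal in $(G)_e$ (because $F \normal G$), with quotient $(G)_e/(F)_e \cong \langle \gamma t^n\rangle \cong \bbZ$. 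Each of the three statements is then a short exercise about cyclic or two-ended groups applied to this exact sequence.

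For the first statement, the first feature suffices: $(G)_e$ contains the infinite cyclic group $\langle \gamma t^n\rangle$ and is therefore infinite, so it is never finite. Note that this argument uses neither the reducedness of $T$ nor any hypothesis on the edge groups; reducedness is carried along only because it is part of the standing assumptions on $T$.

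For the remaining two statements I analyse $(F)_e$ through the quotient $(G)_e/(F)_e \cong \bbZ$. If $(G)_e$ is cyclic then, being infinite, it is $\cong \bbZ$; a subgroup $(F)_e$ with infinite cyclic quotient must be trivial (any nontrivial subgroup $k\bbZ$ of $\bbZ$ has finite quotient), so all edge groups of $\bbY$ vanish and $\bbY$ is a free splitting of $F$. If instead $(G)_e$ is virtually cyclic then, being infinite, it is two-ended, and I invoke the standard dichotomy that every subgroup of a two-ended group is either finite or of finite index --- seen by intersecting with a finite-index copy of $\bbZ$, since an infinite subgroup meets it in a finite-index subgroup and hence has finite index in $(G)_e$. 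Were $(F)_e$ infinite it would thus have finite index, contradicting that $(G)_e/(F)_e \cong \bbZ$ is infinite; so $(F)_e$ is finite and $\bbY$ is a splitting over finite subgroups. The one point demanding genuine care is this two-ended dichotomy; once it is in hand, all three conclusions drop out with no further computation.
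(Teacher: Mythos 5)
Your proof is correct and takes essentially the route the paper intends: the paper states that this lemma ``directly follows from the previous lemmas,'' and the argument it has in mind is exactly yours, namely that Lemma \ref{lem;edge_susp} exhibits each edge stabilizer $(G)_e$ as $(F)_e\sd\langle\gamma t^n\rangle$ with infinite cyclic quotient $(G)_e/(F)_e$, forcing $(G)_e$ to be infinite and forcing $(F)_e$ to be trivial (cyclic case) or finite (virtually cyclic case). Your explicit justification of the two-ended dichotomy is a welcome addition to what the paper leaves implicit.
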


\begin{lemma}\label{lem;no_free}
Let $\bbX$ be a graph-of-groups decomposition of $F\sd \langle t\rangle$ with virtually cyclic edge groups.   Then no vertex group of $\bbX$  is free non-abelian, or even infinitely many ended.
\end{lemma}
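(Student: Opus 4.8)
The plan is to read off the suspension structure of the vertex groups from Lemma~\ref{lem;vert_are_susp} and then to invoke a classical fact on ends of group extensions. Let $V$ be a vertex group of $\bbX$, realized as the stabilizer in $G$ of a vertex $\nu$ of $T$. By Lemma~\ref{lem;vert_are_susp}, $V=(F)_\nu\sd\langle\gamma t^n\rangle$ is the suspension of $N:=(F)_\nu$, the stabilizer of $\nu$ in $F$; in particular $N$ is normal in $V$ and $V/N\cong\mathbb{Z}$. Since a free non-abelian group is infinitely-ended, it suffices to show that $V$ is not infinitely-ended.

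The first step I would carry out is to check that $N$ is finitely generated. By Lemma~\ref{lem;edge_susp} every edge group of $\bbX$ is a suspension $A\sd\mathbb{Z}$ of the corresponding edge group $A$ of $\bbY$. Such a suspension is two-ended only if $A$ is finite: an infinite $A$ would be a subgroup of infinite index, whereas in a two-ended group every infinite subgroup has finite index. Hence the edge groups of $\bbY$ are all finite, so $\bbY$ is a finite graph-of-groups decomposition of the finitely generated group $F$ with finite edge groups; by the generalization of Grushko's theorem to graphs of groups with finite edge groups, all of its vertex groups, and $N$ in particular, are finitely generated. This is the step in which the hypothesis that the edge groups of $\bbX$ are virtually cyclic is really used, and I note that it does not require $T$ to be reduced.

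It then remains to conclude by counting ends. If $N$ is finite, then $V=N\sd\langle\gamma t^n\rangle$ is virtually cyclic, hence two-ended and in particular not infinitely-ended. If $N$ is infinite, then $V$ is finitely generated (by a finite generating set of $N$ together with $\gamma t^n$) and contains the finitely generated infinite normal subgroup $N$ with infinite quotient $V/N\cong\mathbb{Z}$. By the classical theorem on ends of such extensions --- a finitely generated group possessing a finitely generated infinite normal subgroup with infinite quotient has exactly one end --- the group $V$ is one-ended. In both cases $V$ is not infinitely-ended, and a fortiori not free non-abelian.

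I expect the only point carrying genuine content to be this last input on ends, namely that a suspension $N\sd\mathbb{Z}$ of an infinite finitely generated group $N$ is one-ended. For the weaker ``free non-abelian'' assertion one can argue more elementarily, since a finitely generated normal subgroup of a free group is trivial or of finite index, which directly contradicts $N$ being infinite while $V/N$ is infinite; but ruling out all infinitely-ended vertex groups genuinely requires the ends theorem (equivalently, Stallings' splitting theorem together with an analysis of how the normal subgroup $N$ is forced to act on a splitting of $V$ over a finite subgroup). I would cite the ends result rather than reprove it.
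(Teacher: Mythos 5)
Your proof is correct and follows essentially the same route as the paper: finiteness of the edge groups of $\bbY$ (the paper's Lemma \ref{lem;YfreesplitsF}), finite generation of the vertex stabilizers in $F$, the suspension structure of the vertex groups from Lemma \ref{lem;vert_are_susp}, and then ruling out infinitely many ends. The only cosmetic differences are that you cite the classical theorem on ends of extensions where the paper reproves the equivalent statement (its Lemma \ref{lem;infinitely_many_ends}) via Stallings' theorem, and that you explicitly treat the case where the fiber $N=(F)_\nu$ is finite, which the paper leaves implicit.
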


\begin{proof}
By Lemma \ref{lem;YfreesplitsF}, $T$, as an $F$-tree, is the tree of a virtually free splitting of $F$. In particular, since $F$ is finitely generated, each vertex stabilizer in $F$ is finitely generated.  It follows from Lemma \ref{lem;vert_are_susp} that a vertex stabilizer of $T$ in $G$    contains an infinite index normal subgroup, which is as we saw, of finite type. 

Such a group cannot be free.
It cannot be infinitely many ended neither (for the same reason
actually, that we recall in the following lemma).

\begin{lemma}\label{lem;infinitely_many_ends}
 Let $H$ be a finitely generated group with infinitely many ends, then  $H$ has no finitely generated infinite-index normal subgroup.
\end{lemma}

\begin{proof}
 By Stallings' theorem \cite[Thm. 4.A.6.5,  Thm. 5.A.9] {Sta}, $H$ is
 the fundamental group of a reduced finite graph-of-groups with finite
 edge groups (with at least one edge). Let $T$ be the associated
 Bass-Serre tree, and $N$ be a normal subgroup of $H$. Then, the tree
 $T$ is minimal for $N$, hence $T/N$ equals its own core. If $N$ is
 finitely generated, $T/N$ is finite. Moreover, the action of $H$ on
 $T$ factorizes through $T/N$, and if $N$ has infinite index in $H$,
 there is an edge $\tilde{e}$ in $T/N$ fixed by  infinitely many
 different elements of $H$, $\{h_i, i\in I\}$, all in different
 $N$-coset. Let $e$ its image in $T/H$ and $\tilde{\tilde{e}}$ a
 choice of lift in $T$. There are $n_i \in N$ such that for all $i$,
 $h_in_i$ fix $e$. By finiteness of edge stabilizers, infinitely many
 of the elements $h_in_i$ are equal, 
contradicting that the $h_i$ were in different $N$-cosets. 
\end{proof}

\end{proof}

\begin{prop}\label{prop;no_free} Let $F$ be a finitely generated group, and $G=F\sd \langle t\rangle$ a suspension.
 Given any graph-of-groups decomposition  $\bbX$ of $G$,  no vertex
 group of $\bbX$ is a hanging surface group, 
 or a  hanging bounded Fuchsian group. 
\end{prop}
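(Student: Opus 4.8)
The plan is to funnel everything into Lemma \ref{lem;no_free}, which already forbids infinitely-many-ended vertex groups, \emph{provided that every edge group of the decomposition is virtually cyclic}. So the first step is to record that the two kinds of vertex group under scrutiny are in fact infinitely-many-ended finitely generated groups. A non-elementary compact surface with boundary has free fundamental group of rank at least two, hence infinitely many ends; and a hanging bounded Fuchsian group is virtually free by Lemma \ref{lem;HBFvirt_free}, and, being non-elementary, it is virtually free non-abelian, so again has infinitely many ends. Thus if such a vertex group $V$ occurred at a vertex $v$, it would contradict Lemma \ref{lem;no_free} --- but only once we are in a decomposition all of whose edge groups are virtually cyclic.

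The difficulty is that the given $\bbX$ is arbitrary: away from $v$ it may carry large (non-virtually-cyclic) edge groups, so Lemma \ref{lem;no_free} does not apply to $\bbX$ as it stands. The maneuver to circumvent this is a collapse. By the very definition of a hanging surface (resp. bounded Fuchsian) vertex, the edges of $\bbX$ incident to $v$ have stabilizers equal to the boundary subgroups of $V$, which are cyclic (resp. virtually cyclic). So I would collapse, in the Bass-Serre tree $T$, every $G$-orbit of edges except those incident to a lift of $v$; equivalently, collapse in $\bbX$ all edges not incident to $v$. Call the resulting graph-of-groups $\bbX'$.

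I then need to verify that $\bbX'$ has the two required features: its edge groups are virtually cyclic, and $V$ survives as a vertex group. For the first, the edges of $\bbX'$ are exactly the edges of $\bbX$ incident to $v$, whose stabilizers are the boundary subgroups of $V$, all virtually cyclic. For the second, since $T$ is a tree, a lift $\nu$ of $v$ can only be merged with a neighbour under the collapse along a path of collapsed edges, but every edge issuing from $\nu$ is incident to $v$ and hence \emph{not} collapsed; so $\nu$ is identified with no other vertex and its stabilizer remains $V$ unchanged. Therefore $\bbX'$ is a graph-of-groups decomposition of $G$ with virtually cyclic edge groups carrying the infinitely-many-ended vertex group $V$, which contradicts Lemma \ref{lem;no_free}. (If a reduced decomposition is needed to invoke that lemma, one first reduces $\bbX'$; the edges at $v$ persist, since $V$ is non-elementary while the adjacent edge groups are virtually cyclic.) This contradiction rules out both types of vertex group.

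The step I expect to be the most delicate is the bookkeeping around the collapse: one must check that collapsing the $G$-orbits of the non-incident edges is a legitimate collapse in the sense defined above (each such orbit is sent to vertices, and distinct orbits are not identified on a common edge) and, crucially, that it neither enlarges nor otherwise alters the stabilizer of $\nu$ nor changes the stabilizers of the edges touching $\nu$. The tree structure makes this transparent, but it is the point where care is genuinely required; everything else is a direct application of the preceding lemmas.
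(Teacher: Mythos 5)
Your argument is correct and is essentially the paper's own proof: reduce to infinitely-many-ended vertex groups via Lemma \ref{lem;HBFvirt_free}, collapse all edges of $\bbX$ not adjacent to the offending vertex so that the resulting decomposition has virtually cyclic edge groups while the vertex group survives unchanged, and then invoke Lemma \ref{lem;no_free}. The extra bookkeeping you flag about the legitimacy of the collapse is handled in the paper by the same one-line observation you make, namely that no edge adjacent to the vertex is collapsed.
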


\begin{proof}
Assume the contrary: let $\Gamma_{v_0}$ be an alleged  hanging
bounded Fuchsian group or  hanging surface group, which is, in
particular, virtually free (Lemma \ref{lem;HBFvirt_free}), hence
infinitely-many ended because it is non-elementary.
 Because it is hanging, all the neighboring edges of $v_0$ carry
 virtually cyclic groups. Collapse all other edges in $\bbX$ in order
 to get $\bbX'$, whose edges are virtually cyclic. The image $v_0'$ of
 $v_0$ carries the same group, since no adjacent edge has been
 collapsed, and this group is infinitely-many ended, as we noticed.
 Apply Lemma \ref{lem;no_free} to $\bbX'$ to get the contradiction.
\end{proof}

\subsection{The $\Aut(G)$-orbit of the fiber in the hyperbolic case.}

Let us recall that the canonical \ZmJSJ  decomposition 
 of a one-ended hyperbolic
group is a certain finite splitting $\mathbb{X}$ of $G$ over certain virtually cyclic
subgroups (maximal  with infinite center), such that every automorphism of $G$ induces an automorphism
of graph of groups of $\mathbb{X}$ (see \cite[section
4.4]{DGu_gafa}). 
In other word, the natural map
$\delta\Aut(\bbX) \to \Out(G)$ is surjective.

\begin{remark} The choice to use the rather technical
  \ZmJSJ  splitting instead of the more natural ``virtually-cyclic''
  JSJ-splitting, is only suggested by our ability to algorithmically
  compute this decomposition. In principle, we could work with the
  classical JSJ splitting as well in the same way.
\end{remark}

Let $\bbX$ be a graph-of-groups, $v$ a vertex therein,  and $\Gamma_v$ the vertex group.  The choice of an order on the   oriented edges adjacent to $v$, and of a generating set of the edge groups, endows $\Gamma_v$ with a {\it marked peripheral structure}, that is the tuple of conjugacy classes of the images of these generating sets by the attaching maps. We denote by $\calT$ this tuple, and $\Out_m(\Gamma_v,\calT)$ the subgroup of $\Out(\Gamma_v)$ preserving $\calT$ (see also \cite{DGu_gafa}). In the following, the choices of order and generating sets are implicit, and done {\it a priori}.

The following is a typical feature of a JSJ decomposition of a hyperbolic group, whose proof, in this specific setting, is essentially contained in \cite{DGu_gafa}.

\begin{lemma}\label{lem;ZmJSJ} Let $G$ be a one-ended hyperbolic group.
 Let $(\Gamma_v, \calT)$ be a vertex group of the
 \ZmJSJ  decomposition $\bbX$ of $G$, with the marked peripheral structure
 induced by $\bbX$ (and some choice of finite generating sets of edge
 groups). 
 If $\Out_m(\Gamma_v, \calT)$ is
 infinite, then $G$ admits a splitting
 with a hanging bounded Fuchsian vertex group. 
\end{lemma}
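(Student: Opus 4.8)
The plan is to show that an infinite $\Out_m(\Gamma_v,\calT)$ forces the vertex group $\Gamma_v$ to carry a ``flexible'' structure that can only come from a hanging bounded Fuchsian (\emph{QH}) piece. The guiding principle, which underlies the canonical JSJ theory for hyperbolic groups developed in \cite{DGu_gafa}, is a dichotomy for vertex groups of the \ZmJSJ decomposition: each vertex group $\Gamma_v$, equipped with its marked peripheral structure $\calT$, is either \emph{rigid}, meaning $\Out_m(\Gamma_v,\calT)$ is finite, or it is a hanging bounded Fuchsian (quadratically hanging) piece, in which case its modular group --- and hence $\Out_m(\Gamma_v,\calT)$ --- is infinite, generated by Dehn twists along the curves of the underlying orbifold. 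So the statement is really the contrapositive of the rigidity half of this dichotomy: infinitude of $\Out_m(\Gamma_v,\calT)$ rules out rigidity, leaving only the Fuchsian case.

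First I would localize the problem to the pair $(\Gamma_v,\calT)$. A vertex group of the canonical \ZmJSJ of a one-ended hyperbolic group $G$ is itself a hyperbolic group (being quasiconvex), and $\calT$ records the finitely many peripheral (edge) subgroups as conjugacy classes of virtually cyclic subgroups. The relevant deformation space is that of $\Gamma_v$ relative to $\calT$: by the theory of \cite{DGu_gafa} (and its antecedents in the Rips--Sela and Bowditch frameworks), if $\Out_m(\Gamma_v,\calT)$ is infinite then $\Gamma_v$ admits an essential, non-peripheral splitting over a virtually cyclic subgroup --- concretely, the infinitely many outer automorphisms preserving $\calT$ must include infinite-order elements, and an infinite-order element of the relative outer automorphism group cannot be ``geometric'' (realizable by an isometry of a rigid core) and so must act by unbounded twisting. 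This is the step where I would invoke the relative accessibility and the structure of the relative JSJ of $\Gamma_v$.

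Next I would extract a Fuchsian piece from that infinite twisting. The key mechanism is that an infinite-order element of $\Out_m(\Gamma_v,\calT)$ produces, via Rips-theory analysis of the action on the associated $\bbR$-tree (or equivalently via Bowditch's canonical splitting read off the topology of $\partial \Gamma_v$), a splitting of $\Gamma_v$ containing a quadratically hanging / bounded Fuchsian vertex group relative to $\calT$. Refining $\bbX$ at $v$ by this relative splitting --- that is, replacing the single vertex $v$ by the graph-of-groups decomposition of $\Gamma_v$ just obtained, and re-attaching the original edges of $\bbX$ at $v$ along their peripheral images (which by construction lie in the boundary subgroups) --- yields a splitting of the whole group $G$ in which a hanging bounded Fuchsian group appears as a vertex group. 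That delivers exactly the conclusion.

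The main obstacle I expect is the second step: correctly producing the Fuchsian piece and certifying that it is \emph{hanging} in the resulting splitting of $G$, with the adjacent edge groups being precisely the boundary subgroups. One must check that the peripheral structure $\calT$ is compatible with the orbifold boundary --- i.e.\ that when the relative splitting of $\Gamma_v$ is grafted into $\bbX$, the old edges of $\bbX$ attach along boundary curves and do not cut through the interior of the orbifold, so that the surface/orbifold remains genuinely hanging in $G$. This compatibility is precisely what the \emph{marked} peripheral structure and the canonicity of the \ZmJSJ are designed to guarantee, so I would lean on the explicit description of QH vertices and their boundary incidences in \cite[section 4.4]{DGu_gafa}, rather than re-deriving the Rips machinery, to close this gap cleanly. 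The non-elementarity of the Fuchsian group (needed in Proposition \ref{prop;no_free}) follows because an \emph{infinite} relative outer automorphism group forces the orbifold to be non-elementary hyperbolic, not a small exceptional case.
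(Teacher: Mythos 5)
Your proposal follows essentially the same route as the paper: both deduce from the infinitude of $\Out_m(\Gamma_v,\calT)$ a further compatible virtually cyclic splitting of $\Gamma_v$ (the paper cites \cite[Prop.~3.1]{DGu_gafa}), identify the vertex as a quadratically-hanging/orbisocket piece (\cite[Prop.~4.17]{DGu_gafa}), and refine $\bbX$ at $v$ so that a hanging bounded Fuchsian group appears as a vertex group of a splitting of $G$ (\cite[Def.~4.15]{DGu_gafa}). The "main obstacle" you flag --- that the old edges attach along boundary subgroups so the piece is genuinely hanging --- is exactly what the orbisocket formalism of \cite{DGu_gafa} packages, and the paper likewise resolves it by citation rather than re-derivation.
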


\begin{proof}
By \cite[Prop. 3.1]{DGu_gafa}, such a vertex group $\Gamma_v$ must
have a further compatible splitting over a maximal virtually cyclic
group with infinite center, which allows to use  \cite[Prop
4.17]{DGu_gafa} to ensure that $(\Gamma_v, \calT)$ is a so-called
hanging orbisocket, which by definition \cite[Def. 4.15]{DGu_gafa}
allows to refine $\bbX$ in order to get a splitting of $G$ whose one
vertex group is a hanging bounded Fuchsian group.\end{proof}

\begin{prop}\label{prop;coset_rep}
  Let $F$ a finitely presented group, and $G=F\sd \langle t\rangle$ a
  suspension that is assumed to be hyperbolic.  

  Then, the image in $\Out(G)$ of the small modular group of the \ZmJSJ  decomposition of $G$
  has finite index in $\Out(G)$. Moreover, one can compute a set of
  right-coset representatives of $Mod_{\bbX}$ in $\Out(G)$ (in the
  form of automorphisms of $G$).

\end{prop}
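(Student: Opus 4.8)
The plan is to read off the index from the standard structure theory of automorphisms of a graph of groups, feeding in the geometric input of the previous subsection as the one non-formal ingredient. First I would dispose of the degenerate cases: if $F$ is finite then $G$ is virtually cyclic, $\Out(G)$ is finite, and there is nothing to prove. So assume $F$ infinite. Then $F$ is a finitely generated infinite normal subgroup of infinite index, so $G$ is not two-ended (that would force $F$ finite) and, by Lemma \ref{lem;infinitely_many_ends}, $G$ does not have infinitely many ends; hence $G$ is one-ended, the \ZmJSJ decomposition $\bbX$ exists, and the natural map $\delta\Aut(\bbX)\to\Out(G)$ is onto.

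Let $\delta\Aut_0(\bbX)$ be the subgroup of automorphisms inducing the identity on the finite graph $X$; its index is at most $|\Aut(X)|<\infty$. On it I would consider the forgetful homomorphism
$$\Phi\ \longmapsto\ \big(([\phi_v])_{v},\,(\phi_e)_{e}\big)\ \in\ \prod_{v}\Out(\Gamma_v)\ \times\ \prod_{e\in E^+}\Aut(\Gamma_e).$$
The commutation of diagram (\ref{diagram;Bass}) forces each $[\phi_v]$ to preserve the marked peripheral structure, so the image actually lands in $\prod_{v}\Out_m(\Gamma_v,\calT)\times\prod_{e\in E^+}\Aut(\Gamma_e)$. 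By inspection, the kernel of this homomorphism consists exactly of the $\Phi$ with $\Phi_X=Id$, every $\phi_v$ inner and every $\phi_e=Id$, i.e.\ it is precisely $Mod_{\bbX}$. Consequently $[\delta\Aut_0(\bbX):Mod_{\bbX}]\le\prod_{v}|\Out_m(\Gamma_v,\calT)|\cdot\prod_{e\in E^+}|\Aut(\Gamma_e)|$.

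The edge factors $\Aut(\Gamma_e)$ are finite because \ZmJSJ edge groups are virtually cyclic. The crux is that each $\Out_m(\Gamma_v,\calT)$ is finite: if one were infinite, Lemma \ref{lem;ZmJSJ} would produce a splitting of $G$ with a hanging bounded Fuchsian vertex group, contradicting Proposition \ref{prop;no_free} since $G$ is a suspension. Hence $[\delta\Aut(\bbX):Mod_{\bbX}]<\infty$, and pushing forward along the surjection $\delta\Aut(\bbX)\twoheadrightarrow\Out(G)$ shows that the image of $Mod_{\bbX}$ has finite index in $\Out(G)$. For the effective statement I would note that every ingredient is constructive: the \ZmJSJ of a hyperbolic group is computable, $\Aut(X)$ and the $\Aut(\Gamma_e)$ are finite and listable, and each $\Out_m(\Gamma_v,\calT)$, now known to be finite, can be produced as a list of explicit automorphisms since the vertex groups are quasiconvex (hence hyperbolic) with computable relative automorphism data. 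One then enumerates the candidate tuples in the finite target above, tests which are realized by an honest $\Phi\in\delta\Aut(\bbX)$ (a finite search lifting the outer classes and solving the conjugacy-type equations in (\ref{diagram;Bass}), decidable in the hyperbolic vertex groups), records one lift $\Phi_i$ per realizable tuple, and translates each $\Phi_i$ into an automorphism of $G=\pi_1(\bbX,v_0)$ as in \ref{sec;autom_gog}; the resulting finite family of automorphisms meets every coset of $Mod_{\bbX}$, and redundant ones are discarded by deciding coset membership.

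The genuinely non-formal step is the finiteness of each $\Out_m(\Gamma_v,\calT)$, which rests entirely on the no-hanging-Fuchsian phenomenon for suspensions (Lemma \ref{lem;ZmJSJ} together with Proposition \ref{prop;no_free}); the finite-index conclusion itself is then essentially bookkeeping. On the effective side, I expect the fussiest point to be realizing each admissible tuple by an actual graph-of-groups automorphism, i.e.\ exhibiting genuine $\phi_v$ and twisting elements $\gamma_e$ making all diagrams (\ref{diagram;Bass}) commute simultaneously, rather than merely certifying finiteness.
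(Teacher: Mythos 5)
Your proof is correct and follows essentially the same route as the paper: surject $\delta\Aut(\bbX)$ onto $\Out(G)$, observe that the maps to the graph automorphisms, the edge-group automorphisms, and the $\Out_m(\Gamma_v,\calT)$ have finite image (the last via Lemma \ref{lem;ZmJSJ} combined with Proposition \ref{prop;no_free}), identify the kernel with $Mod_{\bbX}$, and make everything effective by computing the \ZmJSJ and enumerating/realizing the finitely many tuples. The only differences are cosmetic: you merge the paper's two maps $q_E$ and $q_V$ into one forgetful homomorphism, and you add an explicit (and welcome) verification that $G$ is one-ended so that the \ZmJSJ decomposition applies.
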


\begin{proof}
Since $\delta\Aut(\bbX)$  surjects on $Out(G)$ in the case of the
\ZmJSJ    decomposition, 
it suffices to show that the small modular group has finite index in
$\delta\Aut{\bbX}$ and that
coset representatives can be computed in $\delta\Aut{\bbX}$. 

Once again, this is essentially done in \cite{DGu_gafa} (and probably in other
places).

First, the splitting $\bbX$ can be effectively computed \cite[Prop. 6.3]{DGu_gafa}.

  Consider the following three maps. First $q_X: \delta \Aut(\bbX) \to
\Aut(X)$ where $\Aut(X)$ is the automorphism group of the underlying
finite graph $X$. Second, the natural map $q_E:\ker q_X\to \prod_{e\in E} \Aut (\Gamma_e)$.
 Third, the natural map $q_V:\ker q_E\to \prod_{v\in V} \Out_m
 (\Gamma_v, \calT)$, where $\calT$ is the marked peripheral
 structure induced by the ambient graph-of-groups $\bbX$.

The group $\ker q_V$ is the small modular group. The two first maps
have finite image. By Lemma \ref{lem;ZmJSJ} and Proposition \ref{prop;no_free},
 the map $q_V$ also has finite image.

 Therefore the small modular group has finite
index in $\delta \Aut(\bbX)$.  In order to
reconstruct coset representatives of the small modular group in
$\delta \Aut(\bbX)$, it is enough to find coset
representatives for the kernel of each of these maps.

In order to compute coset representatives of $\ker q_E$ in $\delta
\Aut(\bbX)$, we can make the finite list of all graph automorphisms
 of $X$ for which $\Gamma_{\Phi_X(e)} \simeq \Gamma_e$. Let $\Phi_X$
 be any of them.
We can make the list of all isomorphisms $\Gamma_{\Phi_X(e)}
\simeq \Gamma_e$ (there are finitely many such isomorphisms since
these groups are virtually cyclic).
Then we can  apply \cite[Prop. 2.28]{DGu_gafa} in
order to reveal whether this automorphism $\Phi_X$ has a preimage by $q_X$.

In order to compute coset representatives of $\ker q_E$ in $\ker q_X$,
we consider a collection of automorphisms of edge groups, and  apply
\cite[Prop. 2.28]{DGu_gafa} in order to reveal whether this collection has a preimage by $q_E$.

Finally, in order to compute coset representatives of $\ker q_V$ in
$\ker q_E$,  one can make the list of all elements in $\Out
(\Gamma_v,\calT)$ expressed as automorphisms  (by
\cite[Coro. 3.5]{DGu_gafa}, we can enumerate all of
them) and for
each choice of them (for each $v$), check  whether the
collection defines a graph-of-groups automorphism by solving the
simultaneous conjugacy problem that allows the diagram \ref{diagram;Bass} to commute.

\end{proof}

\begin{coro}\label{coro;orbit_fiber_Aut}

Let $F$ be finitely presented, and $G=F\sd \langle t\rangle$  a
suspension that is assumed to be hyperbolic. 
Given 
a    
splitting $\bbX$ of $G$, generating sets for the centralizers of
adjacent edge groups in vertex groups, and a family $(\gamma_j)_{0\leq
  j\leq j_0}$ of elements of $G$, one can decide whether there is an
element $\eta \in \Aut(G)$ whose image $\bar{\eta}$ in $\Aut(H_1(G))$
sends $\bar \gamma_j$ in $\bar F$ for all $j<j_0$ and $\bar
\gamma_{j_0}$  inside $\bar t \bar F$.
\end{coro}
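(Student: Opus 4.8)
The plan is to reduce the $\Aut(G)$-orbit problem to finitely many instances of the $Mod_{\bbX}$-orbit problem already solved in Proposition \ref{prop;orbit_fiber}, the passage from $Mod_{\bbX}$ to $\Aut(G)$ being bridged by the finite-index statement and the computable coset representatives of Proposition \ref{prop;coset_rep}. The first observation is that the condition to be decided depends only on the image of $\eta$ in $\Aut(H_1(G))$, hence only on the class $[\eta]\in\Out(G)$: indeed inner automorphisms act trivially on the abelianisation, so the map $\Aut(G)\to\Aut(H_1(G))$ factors through $\Out(G)$. Since the target subgroups $\bar F$ and $\bar t\bar F$ do not refer to any splitting, the answer is intrinsic to $G$, and we are free to take for $\bbX$ the \ZmJSJ decomposition of $G$, which is computable; generating sets $S_e$ for the centralizers $Z_{\Gamma_{t(e)}}(i_e(\Gamma_e))$ can be produced because the edge groups are virtually cyclic.

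Next I would invoke Proposition \ref{prop;coset_rep}: the image of $Mod_{\bbX}$ in $\Out(G)$ has finite index, and one can compute right-coset representatives $\rho_1,\dots,\rho_N$, each realized explicitly as an automorphism of $G$. Every class of $\Out(G)$ is then of the form $m\circ\rho_i$ with $m\in Mod_{\bbX}$ and $1\le i\le N$, so on homology $\bar\eta=\bar m\circ\bar\rho_i$ and therefore $\bar\eta(\bar\gamma_j)=\bar m\bigl(\bar\rho_i(\bar\gamma_j)\bigr)$. For each $i$ I would compute the elements $\gamma_j^{(i)}:=\rho_i(\gamma_j)\in G$ and run Proposition \ref{prop;orbit_fiber} on the family $(\gamma_j^{(i)})_{0\le j\le j_0}$, which decides whether there exists $m\in Mod_{\bbX}$ with $\bar m(\bar\gamma_j^{(i)})\in\bar F$ for $j<j_0$ and $\bar m(\bar\gamma_{j_0}^{(i)})\in\bar t\bar F$. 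By the displayed identity this is exactly the requirement that the class $m\circ\rho_i$ solve the original problem, so an admissible $\eta\in\Aut(G)$ exists if and only if at least one of the $N$ instances returns ``yes''. As $N$ is finite and each instance is decidable (reducing, via Proposition \ref{prop;orbit_fiber}, to a linear Diophantine system), the whole procedure terminates and is correct.

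The substantive work has all been done upstream: the finiteness of the index of $Mod_{\bbX}$ in $\Out(G)$ rests on Proposition \ref{prop;no_free} ruling out hanging surface and bounded Fuchsian vertex groups, and the computability of the coset representatives on the analysis in Proposition \ref{prop;coset_rep}. What remains is essentially bookkeeping, and the one point genuinely demanding care is the handedness of the coset decomposition. With \emph{right} cosets $Mod_{\bbX}\rho_i$ the representative $\rho_i$ can be transferred onto the input elements $\gamma_j$, leaving the target subgroups $\bar F$ and $\bar t\bar F$ unchanged, so each instance is directly of the shape required by Proposition \ref{prop;orbit_fiber}; had I used left cosets the admissible target would become $\bar\rho_i^{-1}(\bar F)$ and would vary with $i$, breaking the clean reduction. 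Thus the main (mild) obstacle is simply to keep the composition order consistent so that the finitely many reduced problems are literally the ones Proposition \ref{prop;orbit_fiber} solves.
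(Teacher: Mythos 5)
Your proposal is correct and follows essentially the same route as the paper: reduce modulo the finitely many computable right-coset representatives of $Mod_{\bbX}$ in $\Out(G)$ from Proposition \ref{prop;coset_rep}, push each representative onto the input elements $\gamma_j$, and run the $Mod_{\bbX}$-orbit algorithm of Proposition \ref{prop;orbit_fiber} on each of the finitely many resulting families. Your remarks on factoring through $\Out(G)$ and on the handedness of the coset decomposition are consistent with (and slightly more explicit than) the paper's argument.
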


\begin{proof}
First we take note that  $\eta \in \Aut(G)$ satisfies the conclusion
if and only if any other automorphism in the same class in $\Out(F)$
satisfies it. Thus,  let $\alpha_1,\dots,\alpha_k$ be the right-coset representatives of
$Mod_\bbX$ in $\Out(G)$ computed by Proposition
\ref{prop;coset_rep}, in the form of automorphisms. 
 For each $i$, we compute, for each $j$,
$\alpha_i(\gamma_j)$, and we use Proposition \ref{prop;orbit_fiber} in order to
decide whether there is $\eta_0\in Mod_\bbX$ such that
$\bar{\eta_0} \overline{ (\alpha_i(\gamma_j) )} \in \bar F$ for all $j<j_0$ and
$\bar{\eta_0} \overline{(\alpha_i(\gamma_{j_0}) )} \in \bar t\bar F$.

If there exists an index $i$ such that the  answer is positive, then
$\overline{\eta_0 \circ \alpha_i}$ sends $\bar \gamma_j$ in $\bar F$ for all $j<j_0$ and $\bar
\gamma_{j_0}$  inside $\bar t \bar F$.

If for all $i$ the answer is negative, then no automorphism of $G$
satisfies this property.

\end{proof}

\section{Conjugacy and suspensions}

The following observations elaborate on some well known point of view
(see for instance \cite{Sel}, \cite{Arzh}...), and, as stated in the
introduction,  is our angle of attack of the conjugacy problem.

\begin{lemma}\label{lem;wellknown} Let $\phi_1$ and $\phi_2$ be two
  automorphisms of $F$. The following assertions are equivalent.
\begin{enumerate}
 \item \label{it_1} $\phi_1$ and $\phi_2$  are conjugate in $\Out(F)$;
 \item  \label{it_2} there is an isomorphism between their suspensions that
   preserves the fiber  (in both directions) and the orientation;
 \item  \label{it_3} there is an isomorphism between their suspensions that
   preserves the orientation and sends the fiber inside the fiber;
 \item \label{it_4} there is an isomorphism between their suspensions whose
   factorization through the abelianisations preserves the orientation,
   and sends the image of the
   fiber inside the image of the fiber. 
\end{enumerate}
\end{lemma}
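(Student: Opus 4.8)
The plan is to prove the cycle of implications $(1)\imp(2)\imp(3)\imp(4)\imp(2)\imp(1)$. Write $G_i = F\sd_{\phi_i}\langle t_i\rangle$ for the two suspensions, and let $\delta_i\colon G_i\to\bbZ$ denote the transverse-coordinate homomorphism, characterised by $\ker\delta_i=F$ and $\delta_i(t_i)=1$. First I would note that $(2)\imp(3)$ and $(3)\imp(4)$ are mere weakenings of the hypothesis and so cost nothing; all the content therefore lies in $(1)\Leftrightarrow(2)$ and in the upgrade $(4)\imp(2)$.

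For $(1)\imp(2)$ I would use the classical explicit dictionary between conjugacy of structural automorphisms and fibre-preserving isomorphisms of suspensions. Starting from $\psi\in\Aut(F)$ with $\psi\phi_1\psi^{-1}$ equal to $\phi_2$ up to an inner automorphism of $F$, I would set $\Theta|_F=\psi$ and $\Theta(t_1)=t_2h$ for a suitable $h\in F$, and then verify that the defining relation $t_1xt_1^{-1}=\phi_1(x)$ is respected: this is a one-line computation using $t_2yt_2^{-1}=\phi_2(y)$, in which the inner-automorphism ambiguity is exactly absorbed by the freedom in the choice of $h$. The resulting $\Theta$ is an isomorphism carrying $F$ onto $F$ and $t_1$ into $t_2F$, which is $(2)$. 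The converse $(2)\imp(1)$ reads the same computation backwards: an isomorphism with $\Theta(F)=F$ and $\Theta(t_1)=t_2h$ restricts to $\psi:=\Theta|_F\in\Aut(F)$, and the relation forces $\psi\phi_1\psi^{-1}$ to coincide with $\phi_2$ up to conjugation by $h$, hence in $\Out(F)$.

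The step that looks like the main obstacle is $(4)\imp(2)$, since one must recover honest fibre-preservation from purely homological data; the difficulty dissolves once one observes that the fibre is intrinsic to the abelianised picture. Indeed, since $G_i/F\cong\bbZ$ is abelian one has $[G_i,G_i]\subseteq F$, so that $H_1(G_i)/\bar F\cong G_i/F\cong\bbZ$ and the transverse coordinate $\delta_i$ factors through $H_1(G_i)$, with $F=\ker\delta_i$. Now let $\Theta\colon G_1\to G_2$ be an isomorphism as in $(4)$. Because its abelianisation sends $\bar F$ into $\bar F$ and preserves the orientation of the quotient, the induced map of infinite cyclic quotients $H_1(G_1)/\bar F\to H_1(G_2)/\bar F$ is the identity, which is exactly the statement $\delta_2\circ\Theta=\delta_1$. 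Hence $\Theta^{-1}(F)=\Theta^{-1}(\ker\delta_2)=\ker\delta_1=F$, so $\Theta$ maps the fibre onto the fibre in both directions, while $\delta_2(\Theta(t_1))=\delta_1(t_1)=1$ forces $\Theta(t_1)\in t_2F$. Thus the homological hypothesis $(4)$ upgrades automatically to the fibre-and-orientation-preserving isomorphism of $(2)$, closing the cycle; the only genuine idea needed is the remark that $F$ is canonically $\ker\delta$.
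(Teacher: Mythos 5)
Your proposal is correct and follows essentially the same route as the paper: the equivalence $(1)\Leftrightarrow(2)$ is the same explicit computation with the semidirect-product relation, and the homological upgrade rests on the same underlying observation that $[G,G]\subseteq F$ makes the fiber and the orientation visible in $H_1$. The only difference is organisational: you prove $(4)\Rightarrow(2)$ in one step by identifying $F$ with $\ker\delta$ and showing $\delta_2\circ\Theta=\delta_1$ (so that ``fiber inside fiber'' automatically becomes ``fiber onto fiber''), whereas the paper first deduces $(3)$ by a contrapositive argument and then upgrades ``inside'' to ``onto'' using normality of the image of the fiber and the infinite cyclic quotient.
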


\begin{proof}
Of course, \ref{it_2} implies \ref{it_3}, which implies
\ref{it_4}.
Assuming \ref{it_4} we now show \ref{it_3}. If the given isomorphism
$\psi : F\sd_{\phi_1} \langle t \rangle \to F\sd_{\phi_2} \langle t \rangle$
does not preserve the fiber, it sends some $f\in F$ on some $f't^k, f'\in
F, k\neq 0$. 
 Since the fibers are kernels of some cyclic quotient, the derived subgroups of the suspensions are
contained in the fibers. Thus  the image of  $f't^k$ in the abelianisation
of $F\sd_{\phi_2} \langle t \rangle$ is not in the image of $F$. thus, 
 the factorisation through abelianisations of $\psi$ does not send the
 image of the fiber inside the image of the fiber. 
If $\psi$ does not preserve the orientation, it sends $t$ to some $f't^k, f'\in
F, k\neq 1$, and the same argument shows that    the factorisation
through abelianisations of $\psi$ does not  preserve the orientation. 
 Thus we obtain that \ref{it_4} implies
\ref{it_3}.  
Let us prove that  \ref{it_3} implies  \ref{it_2}. 
Let $\alpha$ the isomorphism given by  \ref{it_3}. Then $\alpha(F)$ is
normal in 
$(F\sd_{\phi_2} \langle t \rangle)$ and the quotient is infinite cyclic. Thus
the image of the fiber $F$ is trivial in this quotient (because the
further quotient by this image is also infinite cyclic). It follows
that $\alpha(F) = F$.

Let us prove that \ref{it_1} is equivalent to \ref{it_2}.
Assume that $\Psi : F\sd_{\phi_1} \langle t\rangle \to F\sd_{\phi_2} \langle t'\rangle$  sends $F$ to $F$, and $t$ to $f_0t'$. Then write $\psi$ for the restriction of $\Psi$ to $F$.   

In  $F\sd_{\phi_1} \langle t\rangle$, for all $f\in F$,  one has $t^{-1} f t = \phi_1(f)$. Passing through $\Psi$, one gets (in  $F\sd_{\phi_2} \langle t'\rangle $)  $t'^{-1} f_0^{-1}  \psi(f) f_0t' = \psi( \phi_1(f))$, that is $\phi_2\circ \ad_{f_0} \circ \psi = \psi \circ \phi_1$.

Thus, the classes of $\phi_1$ and $\phi_2$ are conjugate in $\Out(F)$ and furthermore, if $f_0= 1$, $\phi_1$ and $\phi_2$ are conjugate in $\Aut(F)$. 

Conversely, if $\phi_1 = \psi^{-1} \circ   \ad_{f_0} \circ \phi_2 \circ \psi$ for some $\psi$ in $\Aut (F)$, one can extend $\psi$ to $\tilde{\Psi}: F*\langle t\rangle \to  F\sd_{\phi_2} \langle t\rangle$   by setting $\Psi(t) = f_0t \in   F\sd_{\phi_2} \langle t\rangle$. The relation of the semi-direct product by $\phi_1$ vanishes in the image, thus $\tilde{\Psi}$ factorizes through $F\sd_{\phi_1} \langle t\rangle$ producing a bijective morphism.  

\end{proof}

\begin{thm}\label{thm;decide_fop_iso_for_hyp}

There is an algorithm that, given $F$ a finitely  presented group, 
and two automorphisms $\phi,\phi'$ of $F$ such that the suspensions
are  word-hyperbolic, decides whether  $\phi$ and $\phi'$ are
conjugated in $\Out(F)$.

\end{thm}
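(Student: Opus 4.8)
The plan is to combine Lemma~\ref{lem;wellknown}, the solvability of the isomorphism problem for word-hyperbolic groups, and Corollary~\ref{coro;orbit_fiber_Aut}. Write $G = F\sd_\phi \langle t\rangle$ and $G' = F\sd_{\phi'}\langle t'\rangle$ for the two suspensions, with fibers $F$ and $F'$; for $x$ in either group write $\bar x$ for its image in the corresponding abelianisation, $\overline{F}$ and $\overline{F'}$ for the images of the fibers, and $\bar\psi$ for the map induced on abelianisations by a homomorphism $\psi$. Both $G$ and $G'$ are word-hyperbolic by hypothesis and finitely presented since $F$ is. If $F$ is finite then $\Out(F)$ is finite and conjugacy is decidable by inspection, so we may assume $F$ infinite; then each suspension is one-ended, because $F$ is an infinite, finitely generated, infinite-index normal subgroup, which rules out two ends and, by Lemma~\ref{lem;infinitely_many_ends}, infinitely many ends. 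By Lemma~\ref{lem;wellknown} (equivalence of~\ref{it_1} and~\ref{it_4}), $\phi$ and $\phi'$ are conjugate in $\Out(F)$ if and only if there is an isomorphism $\psi:G\to G'$ whose induced map $\bar\psi$ sends $\overline{F}$ into $\overline{F'}$ and $\bar t$ into $\overline{t'}\,\overline{F'}$.

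First I would run the decision procedure for the isomorphism problem of hyperbolic groups (\cite{Sel, DGr, DGu_gafa}) on the pair $(G,G')$. If they are not isomorphic, no such $\psi$ exists and, by the reduction above, $\phi$ and $\phi'$ are not conjugate. If they are isomorphic, I would in addition produce an explicit isomorphism $\iota:G\to G'$: knowing that one exists, the naive parallel enumeration of homomorphisms $G\to G'$ and $G'\to G$ (testing the defining relations, which is effective as both groups are finitely presented) together with the search for a mutually inverse pair is guaranteed to halt and return such an $\iota$.

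Now every isomorphism $\psi:G\to G'$ factors uniquely as $\psi = \eta'\circ\iota$ with $\eta' = \psi\circ\iota^{-1}\in\Aut(G')$, and conversely each $\eta'\in\Aut(G')$ gives an isomorphism $\eta'\circ\iota$; that is, $\Aut(G')$ acts simply transitively by post-composition on the set of isomorphisms $G\to G'$. Hence a $\psi$ as required exists if and only if there is $\eta'\in\Aut(G')$ such that $\bar\eta'\circ\bar\iota$ sends $\overline{F}$ into $\overline{F'}$ and $\bar t$ into $\overline{t'}\,\overline{F'}$. To test this with Corollary~\ref{coro;orbit_fiber_Aut}, I would fix generators $x_1,\dots,x_r$ of $F$, compute the elements $\iota(x_1),\dots,\iota(x_r),\iota(t)$ of $G'$, and set $\gamma_{j-1}=\iota(x_j)$ for $1\le j\le r$ and $\gamma_r=\iota(t)$, with $j_0=r$. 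Since $\bar\iota(\overline{F})$ is generated by the classes $\overline{\iota(x_j)}$ and $\overline{F'}$ is a subgroup, requiring $\bar\eta'\circ\bar\iota$ to carry $\overline{F}$ into $\overline{F'}$ is the same as requiring $\bar\eta'$ to carry each $\bar\gamma_j$ with $j<j_0$ into $\overline{F'}$, while the orientation condition is exactly that $\bar\eta'$ carries $\bar\gamma_{j_0}$ into $\overline{t'}\,\overline{F'}$. This is precisely the orbit problem settled by Corollary~\ref{coro;orbit_fiber_Aut}, applied to $G'$ with its effectively computable \ZmJSJ splitting and the induced generating sets of edge-group centralisers; the algorithm answers ``conjugate'' exactly when this test succeeds.

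The conceptual core of the argument lies not in this final assembly but in why descending to abelianisations is legitimate: the fiber is exponentially distorted in the suspension, so one cannot hope to verify the fiber-and-orientation-preserving condition directly, and it is Lemma~\ref{lem;wellknown} that permits replacing it by the homological condition~\ref{it_4}. The step I expect to require the most care is the passage from the mere existence of an isomorphism to a decidable orbit problem: one must exploit that $\Aut(G')$ acts transitively on isomorphisms $G\to G'$, so that testing a single pulled-back family $(\gamma_j)$ against $\Aut(G')$ simultaneously accounts for every candidate $\psi$. The genuinely hard work behind that test---finite index of $Mod_{\bbX}$ in $\Out(G')$, the absence of surface (hanging Fuchsian) vertex groups, and the reduction to a linear Diophantine system---has already been discharged in Proposition~\ref{prop;coset_rep} and Corollary~\ref{coro;orbit_fiber_Aut}.
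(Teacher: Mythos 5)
Your proposal is correct and follows essentially the same route as the paper: reduce via Lemma~\ref{lem;wellknown}(\ref{it_4}) to a homological fiber-and-orientation condition, invoke the isomorphism problem for hyperbolic groups to obtain one explicit isomorphism $\iota$, note that $\Aut(G')$ acts transitively on the set of isomorphisms $G\to G'$, and feed the pulled-back generators $\iota(x_j)$ and $\iota(t)$ into the orbit problem of Corollary~\ref{coro;orbit_fiber_Aut}. Your added remarks (disposing of the finite-fiber case and observing that $G'$ is one-ended so that the \ZmJSJ machinery behind Proposition~\ref{prop;coset_rep} applies) are welcome details that the paper leaves implicit.
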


\begin{proof}
By Lemma \ref{lem;wellknown}, it suffices to decide whether the
associated semi-direct products of $F$ with $\mathbb{Z}$ with
structural automorphisms $\phi$ and $\phi'$ are isomorphic by an
isomorphism satisfying characterization \ref{it_4} in Lemma \ref{lem;wellknown}.

Let $F'$ be another copy of $F$, with same presentation. We read
$\phi'$ as an automorphism of $F'$.
Let us denote by $G$ and $G'$ the groups of the suspensions of $F
$ and $F'$ by
the given $\phi$ and $\phi'$ respectively. Provided with a presentation of
$F$, we have presentations of $G$ and $G'$.

By the main result of \cite{DGu_gafa}, we can decide whether there is
an isomorphism between $G$ and $G'$. If there is none, we are done. If
there is one, say $\psi: G\to G'$, any other isomorphism is in the
orbit of $\psi$ by $\Aut(G')$. Let  $\{f_i, i=1,\dots,j_0-1\}$ be a generating set
of $F$. We apply our solution to the orbit  
problem \ref{coro;orbit_fiber_Aut} to the elements $\gamma_i = \psi(f_i)$ for $i <j_0$, and
$\gamma_{j_0} = \psi(t)$. By definition, the answer to this orbit problem is positive
if and only if there is an automorphism $\eta$ such that $\eta \circ
\psi$ satisfies  characterization \ref{it_4} in Lemma
\ref{lem;wellknown}. Since all isomorphisms $G\to G'$ are of this
form ({\it i.e.  } $\eta \circ
\psi$ for some automorphism $\eta$), this decides whether there is an isomorphism satisfying
the assertion (\ref{it_4}) of Lemma \ref{lem;wellknown}, hence, whether  $\phi$ and $\phi'$ are
conjugated in $\Out(F)$.

\end{proof}

Theorem \ref{thm;decide_fop_iso_for_hyp} covers the case of atoroidal
automorphisms of free groups, by \cite{Br_gafa}.

\begin{coro} Let $F$ be a free group. 
 The conjugacy problem in  $\Out(F)$ restricted to atoroidal  automorphisms is solvable. 
\end{coro}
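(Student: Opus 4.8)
The plan is to derive this immediately from Theorem \ref{thm;decide_fop_iso_for_hyp} together with Brinkmann's characterization of atoroidality. A free group $F$ of finite rank is finitely presented, so it falls within the scope of Theorem \ref{thm;decide_fop_iso_for_hyp}; the one hypothesis to be supplied is that the suspensions of the two automorphisms under consideration are word-hyperbolic. This is exactly where the restriction to atoroidal automorphisms enters.

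First I would recall Brinkmann's theorem \cite{Br_gafa}: an automorphism $\phi$ of $F$ is atoroidal (no nontrivial power preserves a nontrivial conjugacy class) if and only if the suspension $F\sd_\phi \langle t\rangle$ is word-hyperbolic. Consequently, given two atoroidal automorphisms $\phi,\phi'$, both suspensions $F\sd_\phi \langle t\rangle$ and $F\sd_{\phi'} \langle t'\rangle$ are automatically word-hyperbolic, and Theorem \ref{thm;decide_fop_iso_for_hyp} then decides whether $\phi$ and $\phi'$ are conjugate in $\Out(F)$. This already resolves the problem when the inputs are promised to be atoroidal.

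To make the algorithm certify its own hypothesis rather than rely on a promise, I would prepend the atoroidality test sketched in the introduction. For each input one runs two procedures in parallel: the enumeration searching for a nontrivial element whose image under some nontrivial power of the automorphism is conjugate to it (this halts precisely when the automorphism fails to be atoroidal), and Papasoglu's procedure \cite{Pap} applied to the suspension (this halts precisely when the suspension is word-hyperbolic). By Brinkmann's equivalence, exactly one of the two halts, so each input is correctly classified as atoroidal or not; when both are atoroidal one proceeds as above, and otherwise the pair lies outside the restricted problem.

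I do not expect any genuine obstacle here, as the mathematical content is entirely carried by Theorem \ref{thm;decide_fop_iso_for_hyp} and the equivalence of \cite{Br_gafa}. The only point requiring mild care is the bookkeeping of definitions: checking that the class singled out by Brinkmann as producing hyperbolic suspensions coincides with the class we call atoroidal, and that the enumeration procedure indeed witnesses the failure of atoroidality. Both verifications are routine, so the corollary follows.
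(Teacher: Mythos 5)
Your proposal is correct and matches the paper's own treatment: the corollary is stated as an immediate consequence of Theorem \ref{thm;decide_fop_iso_for_hyp} via Brinkmann's equivalence between atoroidality and hyperbolicity of the suspension, and the parallel certification procedure you prepend is exactly the one the paper sketches in its introduction. Nothing further is needed.
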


{\footnotesize

\vskip 1cm

\noindent {\sc Fran\c{c}ois Dahmani, \\ Univ. Grenoble Alpes, Institut Fourier UMR5582, F-38402 Grenoble, France.
\\{\tt e-mail: francois.dahmani@ujf-grenoble.fr} 

}}


\newpage

\part{On suspensions, and conjugacy of a few more  automorphisms of
  free groups.} \label{part2}

\if0 \documentclass[11pt]{article}
\usepackage{stmaryrd}
\usepackage{latexsym}
\usepackage{amsmath,amsthm,amsfonts,amssymb, mathrsfs, wasysym, pdfsync}
\renewcommand{\topfraction}{.85}
\renewcommand{\bottomfraction}{.7}
\renewcommand{\textfraction}{.15}
\renewcommand{\floatpagefraction}{.66}
\renewcommand{\dbltopfraction}{.66}
\renewcommand{\dblfloatpagefraction}{.66}
\setcounter{topnumber}{9}
\setcounter{bottomnumber}{9}
\setcounter{totalnumber}{20}
\setcounter{dbltopnumber}{9}

\newcommand{\coucou}[1]{\footnote{#1}\marginpar{$\leftarrow$}}

\newtheorem{thm}{Theorem}[section]
\newtheorem{theo}[thm]{Theorem}
\newtheorem{thmbis}{Theorem}
\newtheorem{dfn}[thm]{Definition} 
\newtheorem{defi}[thm]{Definition} 
\newtheorem{cor}[thm]{Corollary}
\newtheorem{coro}[thm]{Corollary}
\newtheorem{corbis}{Corollary}
\newtheorem{prop}[thm]{Proposition} 
\newtheorem{propbis}{Proposition} 
\newtheorem{lem}[thm]{Lemma} 
\newtheorem{lemma}[thm]{Lemma} 
\newtheorem{lembis}{Lemma} 
\newtheorem{claim}[thm]{Claim} 
\newtheorem{claimbis}{Claim} 
\newtheorem{fact}[thm]{Fact} 
\newtheorem{factbis}{Fact} 
\newtheorem{qst}[thm]{Question} 
\newtheorem{qstbis}{Question} 
\newtheorem{pb}[thm]{Problem} 
\newtheorem{pbbis}{Problem} 
 \newtheorem{question}[thm]{Question}

\newtheorem{algo}[thm]{Algorithm} 
\newtheorem{rem}[thm]{Remark}
\newtheorem{remark}[thm]{Remark}
\newtheorem{example}[thm]{Example}
\newtheorem{notation}[thm]{Notation}
\newenvironment{preuve}[1][Preuve]{\begin{proof}[#1]}{\end{proof}}

\newcommand{\disjoint}{\sqcup}
\newcommand{\semidirect}{\ltimes}
\newcommand{\Zmax}{Z_{max}}
\newcommand{\ZJSJ}{$\mathcal{Z}$-JSJ }
\newcommand{\actson}{\curvearrowright}
\newcommand{\ad}{{\rm ad}}
\newcommand{\diam}{\mathop{\mathrm{diam}\;}}
\newcommand{\Aut}{{\rm Aut}}
\newcommand{\Out}{{\rm Out}}

\newcommand {\calA} {{\mathcal {A}}}   
\newcommand {\calB} {{\mathcal {B}}}   
\newcommand {\calC} {{\mathcal {C}}}   
\newcommand {\calD} {{\mathcal {D}}}   
\newcommand {\calE} {{\mathcal {E}}}   
\newcommand {\calF} {{\mathcal {F}}}   
\newcommand {\calG} {{\mathcal {G}}}   
\newcommand {\calH} {{\mathcal {H}}}   
\newcommand {\calI} {{\mathcal {I}}}   
\newcommand {\calJ} {{\mathcal {J}}}   
\newcommand {\calK} {{\mathcal {K}}}   
\newcommand {\calL} {{\mathcal {L}}}   
\newcommand {\calM} {{\mathcal {M}}}   
\newcommand {\calN} {{\mathcal {N}}}   
\newcommand {\calO} {{\mathcal {O}}}   
\newcommand {\calP} {{\mathcal {P}}}   
\newcommand {\calQ} {{\mathcal {Q}}}   
\newcommand {\calR} {{\mathcal {R}}}   
\newcommand {\calS} {{\mathcal {S}}}   
\newcommand {\calT} {{\mathcal {T}}}   
\newcommand {\calU} {{\mathcal {U}}}   
\newcommand {\calV} {{\mathcal {V}}}   
\newcommand {\calW} {{\mathcal {W}}}   
\newcommand {\calX} {{\mathcal {X}}}   
\newcommand {\calY} {{\mathcal {Y}}}   
\newcommand {\calZ} {{\mathcal {Z}}}

\newcommand {\bbA} {{\mathbb {A}}}   
\newcommand {\bbB} {{\mathbb {B}}}   
\newcommand {\bbC} {{\mathbb {C}}}   
\newcommand {\bbD} {{\mathbb {D}}}   
\newcommand {\bbE} {{\mathbb {E}}}   
\newcommand {\bbF} {{\mathbb {F}}}   
\newcommand {\bbG} {{\mathbb {G}}}   
\newcommand {\bbH} {{\mathbb {H}}}   
\newcommand {\bbI} {{\mathbb {I}}}   
\newcommand {\bbJ} {{\mathbb {J}}}   
\newcommand {\bbK} {{\mathbb {K}}}   
\newcommand {\bbL} {{\mathbb {L}}}   
\newcommand {\bbM} {{\mathbb {M}}}   
\newcommand {\bbN} {{\mathbb {N}}}   
\newcommand {\bbO} {{\mathbb {O}}}   
\newcommand {\bbP} {{\mathbb {P}}}   
\newcommand {\bbQ} {{\mathbb {Q}}}   
\newcommand {\bbR} {{\mathbb {R}}}   
\newcommand {\bbS} {{\mathbb {S}}}   
\newcommand {\bbT} {{\mathbb {T}}}   
\newcommand {\bbU} {{\mathbb {U}}}   
\newcommand {\bbV} {{\mathbb {V}}}   
\newcommand {\bbW} {{\mathbb {W}}}   
\newcommand {\bbX} {{\mathbb {X}}}   
\newcommand {\bbY} {{\mathbb {Y}}}   
\newcommand {\bbZ} {{\mathbb {Z}}}   
\newcommand {\sd} {\rtimes}   
\newcommand {\Inn} {{\rm Inn}}
\newcommand {\Fix} {{\rm Fix}}

\newcommand {\ie}{ i.e.  }
\newcommand*{\longhookrightarrow}{\ensuremath{\lhook\joinrel\relbar\joinrel\rightarrow}}
\newcommand {\tto}{ \to\rangle }

\newcommand {\onto} {\twoheadrightarrow}
\newcommand {\into} {\hookrightarrow}
\newcommand {\xra} {\xrightarrow}    
\newcommand{\imp} {\Rightarrow}
\newcommand{\actedon}{\curvearrowleft} 
 
\newcommand{\ul}[1]{\underline{#1}} 
\newcommand{\ol}[1]{\overline{#1}}

\newcommand{\isemidirect}{\rtimes}
\newcommand{\normal} {\vartriangleleft}

\newcommand{\tensor}{\otimes}
\newcommand{\dunion}{\sqcup}
\newcommand{\Dunion}{\bigsqcup} 
\newcommand{\degree}{\ensuremath{{}^{\mathrm{o}}}}
\newcommand{\ra}{\rightarrow}

\title{On suspensions, and conjugacy of a few more  automorphisms of
  free groups.}
\author{Fran\c{c}ois Dahmani\thanks{Partially supported by the ANR
    2011-BS01-013-02) and the Institut Universitaire de France}}

\begin{document}
 
\maketitle

\fi 

\setcounter{section}{0}
\begin{abstract} 

In a previous work, we remarked that  the conjugacy problem for pairs
of atoroidal automorphisms of a free group was solvable by mean of
the isomorphism problem for hyperbolic groups and an orbit problem for
the automorphism group of their suspensions ({\it i.e.} their
semidirect product with $\bbZ$ for the relevant structural automorphism). 

We consider the same problem a few more automorphisms of free groups, those that
produce relatively hyperbolic suspensions  that do not split over a
parabolic subgroup.
\end{abstract}

\subsection*{Introduction}

Let $F$ be a finitely presented group (we will soon assume that it is
free), $\Aut(F)$ be its automorphism group,  and $\Out(F) = \Aut(F)/\Inn (F)$ be its outer automorphism group.

 Given two semi-direct products, $F\sd_\alpha \langle t \rangle$  and
 $F\sd_\beta \langle t' \rangle$, their structural automorphisms
 $\alpha$ and $\beta$ are conjugated in $\Out(F)$ if and only if there
 is an isomorphism $F\sd_\beta \langle t \rangle   \to  F\sd_\alpha
 \langle t' \rangle$  that  preserves the fiber (which is $F$) and the
 orientation ({\it i.e.} sends $tF$ on $t'F$). 

This suggests a way of analysing the conjugacy problem in  a class of
elements of $\Out(F)$
through an isomorphism problem in a class of semidirect products of
$F$.

A motivating case is that of a free group. Though a solution to the conjugacy problem of automorphisms of free
groups was  announced
by Lustig \cite{Lu1, Lu2}, it might still be desirable to find
short complete solutions for specific classes of elements in
$\Out(F_n)$.

In \cite{DCPout} we considered the case of atoroidal automorphisms. In
that case, the semi-direct product $F\sd_\alpha \langle t \rangle$
is a hyperbolic group \cite{Br_gafa}, and since the isomorphism problem for
hyperbolic groups is solvable \cite{Sel, DGr, DGu_gafa},   the conjugacy
problem  for atoroidal automorphisms of free groups was reduced to an
orbit problem for $\Out( F\sd_\alpha \langle t \rangle)$ in
$H_1(F\sd_\alpha \langle t \rangle, \bbZ)$
(this orbit
problem was an interpretation of the condition that there
 is an isomorphism $F\sd_\beta \langle t \rangle   \to  F\sd_\alpha
 \langle t' \rangle$  that  preserves the fiber and the orientation,
 once is given an abstract isomorphism $F\sd_\beta \langle t \rangle   \to  F\sd_\alpha
 \langle t' \rangle$).  We solved this orbit problem by showing that
 $\Out( F\sd_\alpha \langle t \rangle)$ is a virtually abelian group,
 and by interpreting the orbit problem as a system of linear
 Diophantine equations.

In view of \cite{DGr}, \cite{DG_CbDF}, it is natural to ask whether one can approach the 
conjugacy problem of larger classes of automorphisms, namely those producing
proper relatively hyperbolic suspensions.

\begin{defi}\label{def;autom;relhyp} Let $\phi\in \Aut (F)$, and $F_1,
  \dots, F_k$ finitely generated proper subgroups of $F$. We say that the automorphism $\phi$ is hyperbolic relative to $\{F_1, \dots, F_k\}$ if there exists integers $m_1, \dots, m_k>0$ and elements $f_1, \dots, f_k \in F$ such 
that, for all $i$,   $t^{m_i} f_i$ normalises $F_i$,       and such that the group $(F\sd_\phi \langle t \rangle)$ is hyperbolic relative to $\{(F_i \sd\langle t^{m_i} f_i \rangle), i=1, \dots, k  \}$. 
\end{defi}

The case of automorphism of free groups  is once again particularly
interesting, since according to \cite{GL}, all non-polynomial automorphisms of free 
groups should produce interesting relatively hyperbolic suspensions
(of course it could be interesting to consider also a free product of nice
groups).

 Thus from now on $F$ is a free group.

One says that a subgroup $F_0$ of $F$ is \emph{polynomial} for a given
automorphism $\phi$ if every conjugacy class of elements in $F_0$ has
polynomial growth under iterates of $\phi$ (more explicitly, that
means that for all $\gamma\in F_0$, the length of a cyclically reduced
representative of $\phi^n(\gamma)$ is bounded above by a polynomial in
$n$). 
We say that an automorphism is polynomial if $F$ itself is
polynomial.     For an {\it outer} automorphism $\Phi$ of $F$, we say that a
subgroup $F_0$ of $F$ is polynomial for $\Phi$ if there is an automorphism
$\phi$ in the class of $\Phi$ for which $F_0$ is polynomial.

In \cite[Prop 1.4]{Lev09} Levitt proves that for any outer
automorphism $\Phi$ of a free group $F$, there is a finite family of
finitely generated  subgroups of $F$, polynomial for $\Phi$,
such that all polynomial
subgroups of $\Phi$ are conjugated into one of them (see also  \cite[Prop. 3.2]{GL}).

The aim of this note is  thus to explore to what extend the method used in
\cite{DCPout} can be extended to larger classes of automorphisms of
free groups, and in particular to (some) non-polynomial automorphisms.

 However, I ultimately had to restrict the study to those
 automorphisms whose suspension does not split over a parabolic
 subgroup. I also have to concede that this attempt  
uses three results unpublished at the time of writing (this issue will be
made clear in a few lines).

The main result of this attempt is the following.
\begin{theo}
There is an (explicit) algorithm that, given two automorphisms
$\phi_1, \phi_2$ of a finitely generated free group $F$, terminates 
if both produce proper
relatively hyperbolic suspensions, relative to suspensions of
polynomial subgroups, without  parabolic splitting,  and it indicates
whether $\phi_1$ and $ \phi_2$ are conjugated in $\Out(F)$.
\end{theo} 

The arguments presented below involve other tools than in
\cite{DCPout},  in
particular Dehn fillings, and growth of conjugacy classes under
iterations of automorphisms.   
 They rely on a certain number
of currently unpublished results, so I would like to make this
reliance clear. First, there is the main result of Gautero and Lustig
paper \cite{GL}. This is used twice;  
  to produce examples to which the results might apply (so, in some
  sense, as a motivation), and to compute explicitly the polynomial
  subgroups (actually, this is to certify that an exponentially
  growing automorphism is indeed exponentially growing). 
 Then there is the splitting computation of Touikan \cite{Tou}. And finally,
there is the solution ot the isomorphism problem of some rigid relatively
hyperbolic groups, by Guirardel and myself, \cite{DG_CbDF}. 

The authors wants to say merci to the referee, for useful comments and
corrections.

\section{Preliminary}

\subsection{General}
\label{para;recall}

Since this note is a sequel to \cite{DCPout}, we assume that the
reader has access to that previous paper, and we will freely use
its content. For readability, though, we briefly introduce now a few
items that we need
from that paper. First is a variation on some classical fact.

As in \cite{DCPout} we will call a semidirect product with $\bbZ$,  $F\sd_\phi \bbZ$
a suspension of $F$ by $\phi$, whose fiber is $F$ and whose
orientation is defined by $Ft$ (this is to distinguish it from the suspension
by $\phi^{-1}$ which is the same group, but with reverse orientation). 
  
\begin{lemma} (see for instance \cite[Lemma 2.3]{DCPout}) \label{lem;wellknown} Let $\phi_1$ and $\phi_2$ be two
  automorphisms of $F$. The following assertions are equivalent.
\begin{enumerate}
 \item \label{it_1} $\phi_1$ and $\phi_2$  are conjugate in $\Out(F)$;
 \item  \label{it_2} there is an isomorphism between their suspensions
   $(F\sd_{\phi_1} \langle t \rangle)$ and $(F \sd_{\phi_2} \langle t' \rangle)$ that
   preserves the fiber  $F$ (in both directions) and the orientation
   ({\it i.e} sends $t$ in $Ft'$);
 \item  \label{it_3} there is an isomorphism between their suspensions that
   preserves the orientation and sends the fiber inside the fiber;
 \item \label{it_4} there is an isomorphism between their suspensions whose
   factorization through the abelianisations preserves the orientation,
   and sends the image of the
   fiber inside the image of the fiber. 
\end{enumerate}

\end{lemma}
 
When talking about a splitting of a group, we mean a graph-of-group
decomposition (often noted $\bbX$) as defined in Bass-Serre
theory. This is presented in many places, beginning with Serre's
famous book. We recall very briefly our conventions for defining a
splitting, and its automorphisms. 
One is given an underlying graph (unoriented, with possible double edges,
and loops)  $X$ whose set of   vertices and set and oriented edges we
denote respectively by $V$ and $E$, and whose involution on the
oriented edges (reversion of orientation) we denote by $\ol{ } :
e\mapsto \ol{e}$, and terminaison map $t:E\to V$.   One is given
groups for each vertices, denoted $\Gamma_v, v\in V$, and for each
edge $e\in E$,  another group    $\Gamma_e$,  with $\Gamma_e =
\Gamma_{\bar e}$, and an injective morphism $i_e:
\Gamma_e \hookrightarrow \Gamma_{t(e)}$.
 The Bass group is the group generated by all vertex groups and all
 edges with the relations that $\bar e = e^{-1}$ and that $\bar e i_{\bar
   e} (g) e  = i_e (g)$ everywhere it is defined.  The
 fundamental group of the graph of group at a vertex $v_0$ is the
  subgroup of the Bass group 
   of all elements of the form $e_1 \gamma_1
  e_2 \gamma_2 \dots e_r \gamma_r$ where $e_i \in E$,
  $\gamma_i\in \Gamma_{t(e_i)}$ for all $i$, and such that
  consecutive $e_i$ define a loop at $v_0$ in the graph $X$ (that is,
  for all $i$, $t(\ol{e_{i+1}}) = t(e_i)$, and $t(e_r) = t(\ol{e_1}) = v_0$).

If this fundamental group (of the graph of group $\bbX$) is
 isomorphic to a certain group $G$ we say that $\bbX$ is a splitting
 of $G$.  We call a splitting non-trivial if the action of its
fundamental group on the
Bass-Serre tree has no global fixed point. 

An automorphism of the graph of groups $\bbX$ is a tuple $(\Phi_X,
(\phi_v), (\phi_e), (\gamma_e))$ where $\Phi_X$ is a automorphism of
the underlying graph $X$, for all vertices $v$, $\phi_v:\Gamma_v \to
\Gamma_{\Phi_X(v)}$ is an isomorphism,  for all edges $e$, $\phi_e:\Gamma_e \to
\Gamma_{\Phi_X(e)}$ is also an isomorphism, and
$\gamma_e \in \Gamma_{\Phi_X(t(e))}$
satisfies  \begin{equation} \label{BassDiagram}  \begin{aligned}
    \text{Bass Diagram:} &  & \phi_{t(e)}\circ
i_e = {\rm ad}_{\gamma_e} \circ i_{\Phi_X(e)} \circ
\phi_e, \end{aligned}\phantom{\hspace{3cm}}   \end{equation}
for ${\rm ad}_{\gamma_e} : x\mapsto \gamma_e^{-1} x \gamma_e$.  One
might like to read the condition as: ``each attaching map $i_e, e\in E$ commute
with the isomorphisms $\phi_v, \phi_e, v\in V, e\in E$ up to
conjugation in the target vertex group''.

 The small modular group of a splitting  was used in \cite{DCPout}. We
 suggest reading \cite[\S1]{DCPout} and more precisely 
 \S 1.2 {\it loc. cit.} for a slightly broader discussion about it.  The small
 modular group  of a splitting $\bbX$
of a group $G$, denoted by  $Mod_{\bbX}$    is a subgroup of the
automorphism group of $\bbX$ consisting of those for which $\Phi_X = {\rm Id}_X$,
 $\phi_v \in \Inn (\Gamma_v)$ and  $\phi_e ={\rm Id}_{\Gamma_e}$ for
 all $v,e$.  It is generated by  the union of two families of
 automorphisms,  the oriented Dehn twists (for which the $\phi_v$ are
 all the identity, and only one element $\gamma_e\in Z_{\Gamma_{t(e)}}
 (i_e( \Gamma_e) )$ is non trivial), and the
inert twists (for which $\phi_v = {\rm ad}_{\gamma_v}$ and $\gamma_e $
is the same
$\gamma_v$ if $t(e)=v$). The  image of the small modular group in the outer automorphism group of $G$
consists of the group generated by Dehn twists over 
edges of the splitting $\bbX$.   Its  image in the automorphism group of the abelianisation of  $G$ is generated by Dehn twists over non separating
edges of  $\bbX$. 

Given a suspension $F\sd \langle t \rangle$, we define the map
$\delta: F\sd \langle t \rangle \to \bbZ$ to be the quotient by
$F$. Of course, it factorises through the abelianisation of $F\sd \langle t
\rangle$. We insist in seeing the targer of $\delta$ as $\bbZ$ to be
able to interpret $\delta(\gamma)$ as an  integer.  

Given a splitting $\bbX$ of $F\sd \langle t \rangle$, and a choice of
base point in $\bbX$, we can realise each element $\gamma$ of  $F\sd \langle t
\rangle$ as its expression in the Bass group, and for each (oriented)
edge $e$ of $X$ we may define $n(\gamma,e)$ as the number of occurences
of $e$ in the reduced form of this expression, minus the number of
occurences of $\ol{e}$.

We obtained the following result.

\begin{prop} (See \cite[2.3]{DCPout})\label{prop;orbit_fiber}
  Let $G$ be a finitely generated group that can be expressed as a
  semi-direct product $F \sd \langle t\rangle$.

 Given a splitting $\bbX$ of $G$, and for each $e\in E$, a generating  set $S_e\subset \Gamma_{t(e)}$ for $ Z_{\Gamma_{t(e)}} ( i_e
 (\Gamma_e)) $
 and a family
 $(\gamma_j)_{0\leq j\leq j_0}$ of elements of $G$, one can decide
 whether there is an element $\eta \in Mod_{\bbX}$ whose image
 $\bar{\eta} $ in $\Aut(H_1(G))$  sends $\bar \gamma_j$ in $\bar F$
 for all $j<j_0$ and $\bar \gamma_{j_0}$  inside $\bar t \bar F$.

More precisely, there is such an element $\eta$ if and only if the
explicit Diophantine linear system of equations 
\begin{equation} \label{eq;Dioph_lin}  \forall j, \sum_{e\in E^+\\ s_e \in S_e}  r_{s_e}
  n(\gamma_j, e) \delta(s_e) = -\delta(\gamma_j) \, + \,  {\rm dirac}_{j=j_0}    \end{equation}
 (with unknowns
$r_{s_e}$) has a solution. 
\end{prop}

\subsection{On polynomial growth}

The following  preliminary result is useful. It follows from the
recent algorithmic construction of relative train tracks, by Feighn
and Handel \cite[Theorem 2.1]{FH14}.  We give a different proof below 
(certainly not of the same scope as the mentioned reference) for the curiosity of the reader.

\begin{prop}\label{prop;certify_poly}

There is an algorithm that, provided with a free group $F$ and an automorphism $\phi$, terminates and indicates whether $F$ is polynomial for $\phi$. 

\end{prop}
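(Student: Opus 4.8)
The plan is to reduce the question to deciding the \emph{growth type} of a single computable sequence of integers, and then to exploit the fundamental fact that free-group automorphisms have no intermediate growth. First I would fix a free basis $x_1,\dots,x_n$ of $F$ and consider the sequence $g(N)=\max_{1\le i\le n}\ell(\phi^N(x_i))$, where $\ell$ denotes cyclically reduced word length; this sequence is plainly computable. By subadditivity of word length, for any $w=x_{i_1}\cdots x_{i_k}$ one has $\ell(\phi^N(w))\le |w|\,g(N)$, so if $g$ is bounded by a polynomial then every conjugacy class grows polynomially, i.e. $F$ is polynomial for $\phi$; conversely, if $F$ is polynomial then each generator grows polynomially and $g$ is polynomially bounded (the only point needing care is the comparison between reduced and cyclically reduced lengths under iteration, which is controlled by the bounded cancellation lemma). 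Thus $F$ is polynomial for $\phi$ if and only if $g(N)$ grows polynomially.

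The key input is the dichotomy coming from the theory of (relative) train tracks of Bestvina and Handel: for any $\phi\in\Aut(F)$ the sequence $g(N)$ either is bounded by a polynomial (of degree at most $n-1$) or grows exponentially, with nothing in between. Granting this, the decision procedure is to compute, by the Bestvina--Handel algorithm (which terminates), a relative train track representative of $\phi$, or of a suitable power $\phi^k$ --- passing to a power does not change the growth type. From such a representative one reads off the filtration into strata together with the transition matrix of each stratum, and $\phi$ is polynomially growing precisely when no stratum is exponentially growing, i.e. when every transition matrix has spectral radius $1$. Since this is a finite inspection of an explicitly computed object, the algorithm halts and returns the correct answer. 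Equivalently, one may phrase this as two parallel enumerations --- one searching for a filtration certifying polynomial growth, the other searching for a legal loop that is stretched by a definite factor under some power of $\phi$, certifying exponential growth --- where the dichotomy guarantees that exactly one search succeeds.

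The main obstacle, and the reason the statement is not entirely trivial, is that one cannot decide polynomial versus exponential growth from the raw values of $g(N)$ alone: any finite initial segment of $g$ is consistent with both behaviours (a sequence may imitate exponential growth for a long time and then level off), so no naive thresholding on the computed lengths can be made to halt correctly without a priori knowledge of the implied constants. What rescues the argument is precisely the structural dichotomy together with the \emph{computability} of a certifying object; equivalently, one may use that in rank $n$ the exponential growth rates, being Perron--Frobenius eigenvalues of bounded-size nonnegative integer matrices, are algebraic integers of bounded degree and hence bounded away from $1$ by a computable constant $\lambda_0(n)>1$ (a Northcott-type finiteness), which is what makes the exponential side detectable. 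The remaining points --- passing to a power and the reduced-versus-cyclic length comparison --- are routine.
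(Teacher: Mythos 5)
Your argument is correct in outline but takes a genuinely different route from the paper's, and it leans on one black box that deserves to be made explicit. The paper runs two semi-decision procedures in parallel: on the polynomial side it first passes to a power acting unipotently on $H_1(F)$ (via \cite{BFH1_Tits1}) and then \emph{enumerates} topological representatives until one is found with the upper-triangular filtration $e\mapsto ec$ whose existence is guaranteed by the Kolchin-type theorem of \cite{BFH2_Tits2} --- so it only ever uses the \emph{existence} of a good representative, never an effective construction of one; on the exponential side it certifies non-polynomial growth by exhibiting a proper relatively hyperbolic structure on the suspension $F\rtimes_\phi\langle t\rangle$ (Gautero--Lustig, plus the semi-decidability of relative hyperbolicity from \cite{DG_PPS}), backed by a divergence argument showing that a properly relatively hyperbolic suspension forces an exponentially growing conjugacy class. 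Your primary route instead computes a relative train track representative outright and reads the answer off the spectral radii of the strata. That is cleaner and stays entirely inside train track theory, but the step ``compute, by the Bestvina--Handel algorithm, a relative train track representative of $\phi$'' is exactly the kind of input the paper is structured to avoid: effective constructibility of relative train track maps for arbitrary (reducible) automorphisms is a substantive result, not a formal consequence of the irreducible case, and was not cleanly available in the literature at the time. Granting it, your proof goes through.

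One further caution on your fallback phrasing. The parallel enumeration you sketch matches the paper on the polynomial side, but the exponential-side ``certificate'' --- a legal loop stretched by a definite factor --- is not yet finitely checkable as stated: legality only makes sense relative to a train track structure you have not computed, and, as you yourself observe, raw length data over any finite window is consistent with both growth types. The computable gap $\lambda_0(n)>1$ does not by itself repair this, since polynomial growth of degree up to $n-1$ with uncontrolled constants can dominate $\lambda_0(n)^N$ on any fixed initial segment; the gap only becomes usable once a cancellation-free (train track) structure is in hand, which returns you to the black box above. So either commit openly to the computability of relative train tracks as your certifying object, or replace the exponential-side certificate by something independently semi-decidable, as the paper does with relative hyperbolicity of the mapping torus.
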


\begin{proof}

First, we will give a procedure certifying that an automorphism is
polynomial, and then a procedure certifying that an automorphism is of
exponential growth on some conjugacy class. 

By \cite[Coro 5.7.6]{BFH1_Tits1},  if $\phi$ is a polynomially growing automorphism, then there is $n$ such that $\phi^n$ is unipotent in $GL(H_1(F))$.
It is then sufficient to devise a procedure certifying whether a unipotent automorphism is polynomially growing. Then we use (a weak aspect of) Theorem \cite[3.11]{BFH2_Tits2}: if the automorphism is polynomially growing, there exists a topological representative $\tau: G\to G$ of $\phi$ on a graph $G$, and a filtration of $G$, $\{v\}=G_0\subset \dots \subset G_n=G$  such that any edge $e$ in $G_i\setminus G_{i-1}$ is sent on a path $ec$ where $c$ is a path in $G_{i-1}$.  
 Also, 
if such a representative exists, then, for every edge $e \in G_i$,  the length of $\phi^n(e)$  can be bounded by a polynomial  
in $n$ depending only on $i$   (this can be seen by induction; it is obvious for $i=0$ or $1$ since $G_0$ contains no edge, and if it is true for $i-1$, let $P_{(i-1)}$ the corresponding polynomial,  and for  $e \in G_i$,  with $\phi(e)= ec$, we can write $\phi^n(e)= e\phi(c)\phi(c)^2\dots \phi(c)^{n-1}$,  and the total length is bounded by $\sum_{k\leq n-1}  P_{(i-1)}(k)^{|c|}$, hence by  $\sum_{k\leq n-1}  P_{(i-1)}(k)^{M}$ for $M=\max\{|\phi(e)|, e\in G_i\}$, which is polynomial in $n$).
Thus, $\phi$ is polynomial if and only if there is such a topological
representative. This can be certified by enumeration of topological
representatives, since the condition used is easily algorithmically
checked.

We now need a procedure that produces a certificate that $\phi$ is not
polynomialy growing when it is the case.

 For that, we'll use that $\phi$ is not polynomially growing if and only if
 the suspension has a proper relative hyperbolic structure. One
 direction of this equivalence ($\implies$) is the content of  \cite{GL} (actually  \cite{GL} describes the relative hyperbolic structure). We
 present now an argument for the other direction. 
  If the suspension is a proper relatively hyperbolic
group (with at least one hyperbolic element), then there are
hyperbolic elements in each coset of the fiber: this follows from
\cite[Lemma 4.4]{OsinIJAC} (I also find
 rather pleasant the following proof:   a simple random walk on the relatively hyperbolic group
 $F\sd \bbZ$ will a.s. walk on only finitely many non-hyperbolic
 elements (apply Borel-Cantelli Lemma, with the exponentially
 decreasing probability to walk on a parabolic element,
 e.g. \cite{Sis}), and, by recurrence on $\bbZ$, it
 will walk infinitely many times on the preimage of any chosen coset).   
Thus, there is a hyperbolic element $f_h$ in the
 fiber $F$, and another $t'= tf_0$ in the coset $tF$. 
  
  The relative distance of $F$ and $t'^kF$ grows therefore linarly in $k$, and by
 exponential divergence (in the hyperbolic coned-off graph), 
 the shortest path, in  $t'^kF$, from  $t'^{k}$ to  $f_h t'^k$ has
 exponential relative length in $k$, hence exponential absolute
 length. This makes $f_h$ an exponentially growing element
 for the automorphism $\phi\circ \ad_{f_0}$. This means that this
 automorphism cannot be polynomialy growing (as it is visible on the
 topological representative $\tau$ of a polynomially growing
 automorphism that no element can be exponentially growing).
 Therefore $\phi$ has an exponentially growing \emph{conjugacy class}
 in $F$.

By \cite{DG_PPS}, and enumeration of the proper subgroups of $F$,
if there exists a proper relative hyperbolic structure, one can
eventually find it and thus a certificate that the automorphism is not
polynomially growing (here a certificate is the data of an
exponentially growing conjugacy class, with a proof that it is
exponentially growing).

\end{proof}

\begin{prop}\label{prop;compute_RHS}
 Let $F$ be a free group. There is an explicit algorithm that,
 given $\phi \in \Aut(F)$ expressed on a basis of $F$, terminates and
 produces a basis for each group of a collection $F_1, \dots, F_k$ of
 maximal 
 polynomial subgroups of
 $F$ for $\Phi$ the class of $\phi$ in $\Out(F)$, and computes minimal exponents $m_i>0$ and elements $f_i$ so
 that $t^{m_i} f_i$ normalises $F_i$ (see definition \ref{def;autom;relhyp}).  
\end{prop}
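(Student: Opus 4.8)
The plan is to combine the certification procedure of Proposition \ref{prop;certify_poly} with the structural result \cite[Prop. 3.2]{GL} that provides a finite family of maximal polynomial subgroups into which every polynomial subgroup conjugates. The overall architecture is a search: I enumerate candidate finitely generated subgroups of $F$ (which is possible since $F$ is free, so finitely generated subgroups are encoded by finite sets of words, or equivalently by finite Stallings graphs), and for each candidate I try to certify that it is a maximal polynomial subgroup. The delicate point is that ``maximal polynomial'' is not immediately a decidable property on the nose, so I would reduce each piece to a procedure that is guaranteed to halt.

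First I would run the algorithm of Proposition \ref{prop;certify_poly} on $\phi$ itself. If $\phi$ is polynomial, then $F$ is its own unique maximal polynomial subgroup, $k=1$, $F_1=F$, and one may take $m_1=1$, $f_1=1$ (since $t$ already normalises $F$); the algorithm outputs this and halts. Otherwise $\phi$ is exponentially growing, and I invoke \cite[Prop. 3.2]{GL} to know that there is a genuine finite collection $F_1,\dots,F_k$ of proper maximal polynomial subgroups. To find them, I would enumerate finite tuples of finitely generated proper subgroups of $F$. For a candidate subgroup $F_0$, I want to certify two things: that $F_0$ is polynomial for $\phi$, and that it is maximal among such. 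Polynomiality of $F_0$ can be certified by applying (a relative version of) the procedure of Proposition \ref{prop;certify_poly}: the restriction of a suitable power $\phi^m\circ\ad_{f}$ to the $\phi$-periodic conjugate of $F_0$ is an honest automorphism of a free group, and I can certify that this restriction is polynomially growing using the Bestvina--Feighn--Handel topological-representative criterion exactly as in the proof of Proposition \ref{prop;certify_poly}. Here I must first compute, for the candidate $F_0$, the minimal exponent $m>0$ and element $f\in F$ with $t^{m}f$ normalising $F_0$; this is a search over $m$ and over $f$ using the solution to the membership and conjugacy problems in the free group, halting precisely when such data exist (which \cite[Prop. 3.2]{GL} guarantees for the correct subgroups).

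Maximality is where I expect the main obstacle. Certifying that a polynomial $F_0$ is \emph{maximal} requires ruling out any strictly larger polynomial subgroup, which is a universally quantified condition and hence not obviously semi-decidable by a naive search. My strategy to circumvent this is to not test maximality subgroup-by-subgroup in isolation, but to use the full force of \cite[Prop. 3.2]{GL}: the maximal polynomial subgroups form a canonical finite list, and membership of \emph{every} polynomial element into a conjugate of one of them is part of that statement. Concretely, I would enumerate candidate \emph{tuples} $(F_1,\dots,F_k)$ together with the data $(m_i,f_i)$, and certify the whole tuple at once by checking, via \cite{GL}, that the suspension $G=F\sd_\phi\langle t\rangle$ is hyperbolic relative to $\{F_i\sd\langle t^{m_i}f_i\rangle\}$ in the sense of Definition \ref{def;autom;relhyp}. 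Since \cite{GL} describes the relative hyperbolic structure explicitly and this structure has the $F_i$ as its parabolic fibers, certifying that the proposed peripheral structure coincides with the one produced by \cite{GL} simultaneously certifies both polynomiality and maximality of the $F_i$. The enumeration halts because the correct tuple exists and all the checks involved (computing the relatively hyperbolic structure per \cite{GL}, verifying normalisation relations in $F$, and minimising the exponents $m_i$) are algorithmic, so the procedure terminates and outputs bases for the $F_i$ along with the minimal $m_i$ and the $f_i$.

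The one subtlety I would flag explicitly is the minimality of the exponents $m_i$: once a valid $(F_i,m_i,f_i)$ is found, I would reduce $m_i$ to its least positive value by testing the finitely many smaller exponents $m<m_i$, searching for an $f\in F$ (again via the free-group membership algorithm) such that $t^{m}f$ normalises $F_i$, and this search is bounded and decidable. Thus the reliance on \cite{GL} is what makes the maximality certification terminate, and absent that result one would be stuck exactly at the universally quantified maximality condition.
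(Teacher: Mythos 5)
Your proposal follows essentially the same route as the paper: enumerate candidate subgroups normalised by some $t^m f$, certify their polynomiality with Proposition \ref{prop;certify_poly}, and certify maximality of the whole tuple at once by certifying that the corresponding suspensions form a relatively hyperbolic peripheral structure, which is unique among structures whose parabolic subgroups are suspensions of polynomial subgroups (since polynomial subgroups must be parabolic in any such structure). The one correction is that the certification of relative hyperbolicity for a candidate peripheral structure is carried out by the partial algorithm of \cite{DG_PPS}, not by \cite{GL}, whose role is only to guarantee that the correct tuple exists so that the enumeration terminates.
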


\begin{proof}
Note that there is a unique relative
hyperbolic structure for $F\sd_{\phi} \langle t \rangle$ whose
parabolic groups are suspensions of polynomial subgroups. Indeed, the
polynomial subgroups must be parabolic, by the observation made in the proof of
\ref{prop;certify_poly}.
Moreover, by \cite{GL}, $F\sd_{\phi} \langle t \rangle$ is indeed
relatively hyperbolic to the subgroups that we need to compute.

We enumerate the tuples $(S,m,f)$, where $S$ is a finite subset of
$F$, $m$ is an integer, and $f\in F$.  For each of them, by the usual Stallings' folding process, we may find a
basis of  $\langle S\rangle$,  and we may check
whether $\langle S\rangle$ is stable by conjugation by  $t^mf $ and
$f^{-1}t^{-m}$. If so,  $\langle S\rangle$  is normalized by  $t^mf $;
in that case,   
 using Proposition \ref{prop;certify_poly}, we may certify whether,
 the product $\langle S\rangle \sd \langle t^mf \rangle$  is a suspension of a
polynomial automorphism on $\langle S\rangle $. For any collection of such subgroups, we may use \cite{DG_PPS} in order to certify that $F\sd_{\phi} \langle t \rangle$ is relatively hyperbolic. When this happens, the algorithm is done.
\end{proof}

Recall that a splitting of a relatively hyperbolic group is peripheral if, in the Bass-Serre tree, all parabolic subgroups are elliptic. 
Let us say that $\phi\in \Aut(F)$ is relatively hyperbolic with no
parabolic splitting (RH-noPS for short) if it is  properly hyperbolic
relative to a collection of polynomial subgroups of $F$, and the
suspension $F\sd_\phi \langle t\rangle$ has no non-trivial  
peripheral  splitting over
a subgroup of a parabolic subgroup.  

Let us say that $\phi\in \Aut(F)$ is relatively hyperbolic with no elementary splitting (RH-noES for short) if it is properly hyperbolic relative to a collection of polynomial subgroups of $F$, and the suspension $F\sd_\phi \langle t\rangle$ has no  peripheral  splitting over a cyclic or parabolic subgroup,  except the trivial one.

An unsatisfying aspect of this work is that I am unable to provide an algorithm certifying whether an element of $\Aut(F)$ is RH-noPS. But if it is, then we can do something.

\section{Conjugacy problems}
\subsection{Conjugacy of two relatively hyperbolic automorphisms without elementary splitting}
 
\begin{prop}\label{prop;RHnoES}
 Let $F$ be a free group. There is an (explicit) algorithm that, given two automorphisms, $\phi_1, \phi_2$, terminates if  both $\phi_i$ are RH-noPS, and  provides 

\begin{itemize} 
   \item either an  isomorphism $F\sd_{\phi_1} \langle t \rangle \to F\sd_{\phi_2} \langle t \rangle$   preserving fiber, and orientation;
   \item or  a certificate that     $F\sd_{\phi_1} \langle t \rangle$ and  $F\sd_{\phi_2} \langle t \rangle$ are not isomorphic by an isomorphism preserving fiber, orientation;
   \item or a non-trivial 
 peripheral splitting of either $F\sd_{\phi_i} \langle t\rangle$ over
 a cyclic subgroup, which is either maximal cyclic or
   parabolic. 
\end{itemize}

\end{prop}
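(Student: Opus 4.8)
The plan is to transpose the two-stage strategy of Theorem \ref{thm;decide_fop_iso_for_hyp}: produce one isomorphism between the two suspensions, then run an orbit problem in homology to correct it into a fiber-and-orientation-preserving one. Two features force a change of tools. First, the JSJ over virtually cyclic subgroups and the small modular group of Dehn twists must be replaced by their versions relative to the parabolic structure. Second, the suspension need not be rigid, which is precisely why a third, inconclusive, output (a cyclic peripheral splitting) is allowed.

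First I would apply Proposition \ref{prop;compute_RHS} to each $G_i=F\sd_{\phi_i}\langle t\rangle$ to compute its canonical relative hyperbolic structure; this terminates because the $\phi_i$ are properly relatively hyperbolic, and it exhibits the maximal parabolic subgroups as suspensions $P_j=F_j\sd\langle t^{m_j}f_j\rangle$ of the polynomial subgroups. Any fiber-preserving isomorphism sends $F$ to $F$, hence sends polynomial subgroups to polynomial subgroups and parabolics to parabolics; so it automatically respects the peripheral structure, and I may search for isomorphisms in the relatively hyperbolic category.

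Next I would run Touikan's splitting algorithm \cite{Tou} to compute the canonical peripheral JSJ of each $G_i$ over elementary subgroups. Since the suspension is torsion free and $\phi_i$ is RH-noPS, no edge group is contained in a parabolic, so every edge group is an infinite cyclic loxodromic subgroup. If this JSJ is nontrivial for some $i$, it is a reduced peripheral splitting of $G_i$ over a cyclic subgroup, and I return it (third output). Otherwise both $G_i$ are rigid relative to their parabolics, and I invoke the isomorphism algorithm for rigid relatively hyperbolic groups \cite{DG_CbDF}. If $G_1$ and $G_2$ are not isomorphic preserving the peripheral structure, then by the previous paragraph they admit no fiber-preserving isomorphism and I return the corresponding certificate (second output). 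If they are isomorphic, I fix one isomorphism $\psi\colon G_1\to G_2$; every isomorphism is then $\eta\circ\psi$ with $\eta\in\Aut(G_2)$, so it remains to decide, exactly as in Corollary \ref{coro;orbit_fiber_Aut}, whether some $\eta$ makes $\eta\circ\psi$ send $\bar F$ into $\bar F$ and preserve the orientation of $H_1(G_2)$ (characterisation \ref{it_4} of Lemma \ref{lem;wellknown}).

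The hard part is this final orbit problem. For a rigid group there are no Dehn twists, so the infinite part of $\Out(G_2)$ is no longer a small modular group but is governed by the automorphisms of the parabolics: up to finite index and a finite permutation of the $P_j$, an automorphism is determined by its restrictions to the $P_j$, so $\Out(G_2)$ should be virtually controlled by $\prod_j \Out(P_j)$, with effective coset representatives furnished by \cite{DG_CbDF} in the spirit of Proposition \ref{prop;coset_rep}. The delicate point is that such an orbit problem can be genuinely undecidable for an arbitrary subgroup of $GL(H_1(G_2))$, as the example of \cite{BMV} in $GL(4,\mathbb{Z})$ shows. The feature I would exploit is that each $P_j$ is the suspension of a \emph{polynomially} growing automorphism, so the automorphisms of $P_j$ compatible with the orientation act unipotently on $H_1(P_j)$; their induced action on $H_1(G_2)$ is then by transvection-like maps whose effect on the $\bar t$-coordinate is explicitly computable, and the fiber-and-orientation condition reduces to an effective linear system of the same nature as equation (\ref{eq;Dioph_lin}). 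A solution yields $\eta$ and the desired isomorphism (first output); the absence of a solution, together with the finiteness of the coset representatives, certifies non-existence (second output). Establishing rigorously that $\Out(G_2)$ is virtually controlled by these unipotent parabolic automorphisms, and hence that the homological orbit problem is decidable, is the main obstacle --- and the place where both the rigidity granted by the no-splitting hypothesis and the polynomial nature of the parabolics are indispensable.
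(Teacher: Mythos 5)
Your plan diverges fundamentally from the paper's proof, and the point where it diverges is exactly where it breaks down. The paper does \emph{not} compute a relative JSJ, does not invoke rigidity to list isomorphisms $G_1\to G_2$, and never attempts to control $\Out(G_2)$ or its action on $H_1(G_2)$. Instead it runs three semi-decision procedures in parallel: (i) a naive enumeration of pairs of mutually inverse morphisms $G_1\to G_2\to G_1$, halting when a fiber-and-orientation-preserving pair is found; (ii) a sequence of Dehn fillings $\bar G_i^{(m)}=G_i/\langle\langle\bigcup_j N_{i,m,j}\rangle\rangle$ along characteristic finite-index subgroups of the parabolics, each certified hyperbolic by Papasoglu and still a suspension, to which Theorem \ref{thm;decide_fop_iso_for_hyp} is applied, halting if some quotient pair is \emph{not} fiber-and-orientation isomorphic; and (iii) a Tietze-transformation search for a nontrivial cyclic peripheral splitting. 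Correctness rests on the convergence theorem \cite[5.1]{DG_CbDF}: if fillings of arbitrarily deep level admit fiber-and-orientation-preserving isomorphisms and no peripheral splitting exists, then an isomorphism $G_1\to G_2$ exists and inherits fiber- and orientation-preservation from the co-finality of the kernels, so procedure (i) must eventually halt.

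The genuine gap in your proposal is the final orbit problem. You need to decide whether some $\eta\in\Aut(G_2)$ corrects $\psi$ into a fiber-and-orientation-preserving isomorphism, and for that you need (a) an effective description of a finite-index subgroup of $\Out(G_2)$ together with coset representatives, and (b) decidability of the resulting orbit problem on $H_1(G_2)$. Neither is established. The claim that $\Out(G_2)$ is ``virtually controlled by $\prod_j\Out(P_j)$ acting unipotently, by transvection-like maps'' is an assertion, not a proof: the analogue of Propositions \ref{prop;Mod_ab} and \ref{prop;coset_rep} in the relatively hyperbolic setting is precisely what is missing, and the $\Out(P_j)$ for $P_j$ a suspension of a polynomially growing automorphism are not known to act on $H_1(G_2)$ in a way that reduces the orbit problem to a linear Diophantine system --- as the paper itself warns via \cite{BMV}, orbit problems for subgroups of $GL(H_1(G_2))$ are undecidable in general, so the burden of exhibiting the special structure is on you. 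You flag this yourself as ``the main obstacle,'' which is an honest assessment, but it means the proof is incomplete at its central step. The paper's Dehn-filling detour exists precisely to avoid ever confronting $\Out(G_2)$: all the $\Out$-theoretic work is done in the hyperbolic quotients, where Section 3 applies verbatim, and the passage back to $G_i$ is handled by the compactness statement of \cite{DG_CbDF} rather than by any structural theorem about $\Out(G_2)$. A secondary issue: your second step relies on computing a canonical elementary JSJ of a relatively hyperbolic group and on an isomorphism algorithm for the rigid case with full control of all isomorphisms up to conjugacy; the paper deliberately avoids needing the full list here (it only needs existence in procedure (i) and non-existence after filling in procedure (ii)), and defers the finer control to Proposition \ref{prop;better}, where an extra hypothesis on centralizers in the fillings (Lemma \ref{lem;in_the_DF}) is required to make it work.
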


The algorithm in question may terminate  even if one of the $\phi_i$ is not RH-noPS. It never lies though.

The following application is immediate, given Lemma \ref{lem;wellknown}. 
\begin{coro}
 The conjugacy problem for RH-noES elements of $\Out(F)$ is solvable: there is an algorithm that given two automorphisms that are  RH-noES, decides whether or not they are conjugated.
\end{coro}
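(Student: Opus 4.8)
The plan is to deduce the decision procedure directly from the algorithm of Proposition \ref{prop;RHnoES}, using two elementary observations about the hypotheses together with the characterisation of conjugacy furnished by Lemma \ref{lem;wellknown}. No new machinery is needed: the content is entirely in organising the three possible outputs of that algorithm under the stronger RH-noES assumption.

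First I would record that the RH-noES hypothesis subsumes the RH-noPS hypothesis under which Proposition \ref{prop;RHnoES} is stated. Indeed a subgroup of a parabolic subgroup is itself a parabolic subgroup, so the RH-noES requirement of having no nontrivial reduced peripheral splitting over a cyclic \emph{or} parabolic subgroup forbids, in particular, every nontrivial reduced peripheral splitting over a subgroup of a parabolic subgroup, which is exactly the RH-noPS requirement. Hence if $\phi_1$ and $\phi_2$ are RH-noES they are a fortiori RH-noPS, and the algorithm of Proposition \ref{prop;RHnoES} is guaranteed to terminate on the pair (recall that this algorithm never lies, so its output will be correct).

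Next I would observe that the RH-noES hypothesis rules out the third of the three possible outputs of that algorithm. By definition an RH-noES automorphism has a suspension admitting no nontrivial reduced peripheral splitting over a cyclic subgroup; consequently the algorithm cannot return a reduced peripheral splitting of either $F\sd_{\phi_i}\langle t\rangle$ over a cyclic subgroup. Therefore, on an RH-noES pair, the algorithm must terminate with one of the first two outputs: either an isomorphism $F\sd_{\phi_1}\langle t\rangle \to F\sd_{\phi_2}\langle t\rangle$ preserving fiber and orientation, or a certificate that no such isomorphism exists.

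It then remains to translate these two outcomes into a decision about $\Out(F)$, which is where Lemma \ref{lem;wellknown} enters. By the equivalence of items \ref{it_1} and \ref{it_2} there, $\phi_1$ and $\phi_2$ are conjugate in $\Out(F)$ precisely when there is an isomorphism of their suspensions preserving the fiber in both directions and the orientation; so the first output certifies conjugacy and the second certifies non-conjugacy, and in either case the question is decided. The only point demanding genuine care — the place where a loose reading of the definitions would break the argument — is the first observation: one must be sure that the splitting-freeness of RH-noES truly contains that of RH-noPS, since it is this inclusion that secures termination of the borrowed algorithm. Once that inclusion is granted, the exclusion of the cyclic-splitting output and the appeal to Lemma \ref{lem;wellknown} are formal, and the corollary follows.
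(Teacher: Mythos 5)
Your proposal is correct and follows exactly the route the paper intends: the paper dismisses the corollary as "immediate, given Lemma \ref{lem;wellknown}," and your write-up simply makes explicit the two observations left implicit there — that RH-noES implies RH-noPS (so the algorithm of Proposition \ref{prop;RHnoES} terminates and never lies), and that RH-noES excludes the third output (a nontrivial reduced peripheral cyclic splitting), so the remaining two outputs decide conjugacy via the equivalence of items \ref{it_1} and \ref{it_2} of Lemma \ref{lem;wellknown}.
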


Let us now prove  Proposition \ref{prop;RHnoES}
\begin{proof}

First, by Proposition \ref{prop;compute_RHS},  
we may
assume
 that we know explicitly both relative hyperbolic structures of $G_i =
 F\sd_{\phi_i} \langle t\rangle$ with presentations, as suspensions of
 subgroups of $F$,  of the
 parabolic subgroups (that are non-virtually cyclic). 
  Let us write $P_{1,j} = F_{1,j}
\sd\langle r_j\rangle, j=1, \dots, k$ conjugacy representatives of
maximal parabolic subgroups of $G_1$, with $P_{1,j}\cap F = F_{1,j}$
(recall that we have explicit presentations of the groups $P_{1,j}$ as
such suspensions).  Similarily, we have $P_{2,j} = F_{2,j}
\sd\langle r'_j\rangle, j=1, \dots, k'$ conjugacy representatives of
maximal parabolic subgroups of $G_2$. If $k\neq k'$, $G_1$ and $G_2$
cannot be isomorphic, hence we can assume that $k=k'$. 

In parallel, we then perform the three following searchs (so-called procedures, below). 

The first procedure is the enumeration of morphisms $G_1 \to G_2 \to G_1$. It stops when mutually inverse isomorphisms preserving orientation and sending the fiber into the fiber are found.

The second procedure is as follows. For incrementing integers $m$,  we compute
$N_{1,j}^{(m)}$ the intersection of all subgroups of $F_{1,j}$ of index
$\leq m$. Note that for each $j$, $N_{1,j}^{(m)}$  is a sequence of normal subgroups of
$P_{1,j}$, with trivial intersection (as $m$ goes to infinity). Denote
by    $\langle\langle \cup_{j} N_{1,j}^{(m)} \rangle \rangle$ the
normal closure in $G_1$ of their union (over $j$ for a fixed $m$). 
 Then we try to certify, using \cite{Pap}, that $\bar G_1^{(m)} = G_1/\langle\langle \cup_{j} N_{1,j}^{(m)} \rangle \rangle$ is hyperbolic.
 
We will denote by $K_{1}^{(m)} $ the kernel $\langle\langle \cup_{j}
N_{1,j}^{(m)} \rangle \rangle$ in $G_1$ of the previous quotient.

 Similarily, we compute  $\bar G_2^{(m)}$  and check that it is hyperbolic.
 Since $P_{1,j}/N_{1,j}^{(m)}$ is virtually cyclic, by virtue of the Dehn
 Filling theorem \cite[Thm. 1.1]{O_DF},    
 for $m$ large enough these
 groups are indeed hyperbolic. So this step of the second procedure 
 will eventually provide groups  $\bar G_i^{(m)}$, ($i=1,2$, $m$
 sufficiently large),  that are 
 certified hyperbolic. For all $m$,   $ \cup_j N_{1,j}^{(m)} $ is 
 contained in $F$ which is normal in $G_1$. Hence the whole group
 $K_{1}^{(m)} $ is contained in $F$, and $\bar G_1^{(m)}$ is naturally a
 suspension $\bar G_1^{(m)} = (F/     K_{1}^{(m)} ) \sd \langle \bar t
 \rangle$. The second procedure then calls the algorithm of Theorem
 \cite[3.2]{DCPout}  
 in order to decide whether  $\bar G_1^{(m)} $ and  $\bar G_2^{(m)} $ are isomorphic by a fiber and orientation preserving isomorphism. This is done in parallel for all incrementing $m$ for which the groups are certified hyperbolic. The second procedure stops if an integer $m$ is found so that   $\bar G_1^{(m)} $ and  $\bar G_2^{(m)} $ are not isomorphic by a fiber and orientation preserving isomorphism.

The third procedure is as follows. For both $i=1,2$, one enumerates
presentations of  $G_i = F\sd_{\phi_i} \langle t\rangle$ by Tietze
transformations, and,   for each one exhibiting a splitting of $G_i$ over
a cyclic subgroup as an amalgamation,  we check whether the splitting
is non-trivial (it suffices to check that both factors have a
generator that does not commute with the cyclic subgroup)   and,   for
each one of the form of an HNN extension, $\langle H, t \,|\, tct^{-1}
= c', \calR_H \rangle$     we check whether the stable letter $t$ is
non trivial (so the presentation is genuinely that of an HNN-extension
over a cyclic group). If we discover a non-trivial cyclic splitting,  
 we may check whether its cyclic edge subgroup is maximal cyclic or
 parabolic, using \cite[Thms 5.6 and  5.17]{Osin}. 
We then enumerate the conjugates of the parabolic subgroups, and if we
find that each parabolic subgroup has a conjugate contained in a
vertex group,  this third procedure stops, and outputs the splitting,
with the relevant conjugations of parabolic subgroups.

Now that we described the three procedures, we discuss the implication  of their termination.

If the first procedure terminates, then 
by  Lemma \ref{lem;wellknown},  there exists a fiber-and-orientation preserving isomorphism, and the two given automorphisms of $F$ are conjugated in $\Out (F)$. 
If the second procedure terminates, there cannot exist any  isomorphism $G_1 \to G_2$ preserving fiber and orientation (it would preserve the class of parabolic subgroups, characterised by being polynomial, and hence pass to the characteristic quotients). If the third procedure terminates, we have found a  non-trivial splitting of either $F\sd_{\phi_i} \langle t\rangle$ over a cyclic subgroup.

Now we need to show that there is always at least one procedure that terminates, {\it i.e.}  the following lemma.

\begin{lemma}\label{lem;call52}
Assume that $G_1$ and $G_2$ are  RH-noPS.    If the third procedure and the second procedure never terminate, then the first procedure terminates.

\end{lemma}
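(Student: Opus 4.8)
The plan is to read the non-termination of the second and third procedures as two structural facts about $G_1$ and $G_2$, and then feed them into the Dehn-filling machinery of \cite{DG_CbDF}. If the third procedure never stops, then neither $G_1$ nor $G_2$ admits a nontrivial reduced peripheral splitting over a cyclic subgroup; combined with the standing RH-noPS hypothesis (no nontrivial peripheral splitting over a subgroup of a parabolic), this says that each $G_i$ has no nontrivial reduced peripheral splitting over a cyclic or parabolic subgroup, i.e. each $G_i$ is rigid in the sense required by \cite{DG_CbDF}. If the second procedure never stops, then for every $m$ for which the fillings are certified hyperbolic -- and by the Dehn filling theorem \cite[Thm. 1.1]{O_DF} this is a cofinal set of $m$ -- the filled groups $\bar G_1^{(m)}$ and $\bar G_2^{(m)}$ are isomorphic by a fiber-and-orientation-preserving isomorphism, as certified through Theorem \ref{thm;decide_fop_iso_for_hyp}.

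Next I would argue that these two facts force $G_1$ and $G_2$ to be themselves isomorphic by a fiber-and-orientation-preserving isomorphism, whence the first procedure (a blind two-sided enumeration of morphisms) must eventually succeed: once such an isomorphism exists, so does its inverse, and both are found in finite time. The tool is the Dehn-filling criterion for rigid relatively hyperbolic groups from \cite{DG_CbDF}: for rigid groups, non-isomorphism is always certified by some sufficiently deep peripheral filling, so conversely, if a cofinal family of peripheral fillings of $G_1$ and $G_2$ are isomorphic, then the isomorphisms of fillings can be promoted to an isomorphism of the groups. Because here the fillings are always isomorphic, rigidity yields an isomorphism $G_1 \to G_2$.

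It remains to see that the constraint -- preservation of fiber and orientation -- is carried along, and this is where the suspension structure enters. The kernels $K_{m,i} = \langle\langle \cup_j N_{i,m,j}\rangle\rangle$ are contained in $F$, so the canonical quotient $G_i \to \langle \bar t\rangle \cong \mathbb{Z}$ factors through every filling $\bar G_i^{(m)}$; the orientation is thus visible in each filling as this distinguished $\mathbb{Z}$-quotient, and the fiber as its kernel -- exactly the abelianisation-level data used in characterisation \ref{it_4} of Lemma \ref{lem;wellknown}. Hence the constrained isomorphisms of the fillings respect this canonical marking, and the limiting/shortening argument of \cite{DG_CbDF} produces a limit isomorphism $G_1 \to G_2$ respecting the same marking, that is, preserving fiber and orientation.

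The main obstacle is precisely this promotion step: passing from the infinite family of marked isomorphisms of fillings to a single marked isomorphism of the groups. It rests on two ingredients I would make explicit. First, that $\bigcap_m K_{m,i}$ is trivial, so that the fillings genuinely approximate $G_i$; this follows because $F$ is free, hence the subgroups $F_{i,j}$ are residually finite and $\bigcap_m N_{i,m,j} = \{1\}$. Second, the rigidity established above, which is what allows the Rips--Sela type shortening in \cite{DG_CbDF} to converge rather than reveal a further elementary splitting. The fiber-orientation constraint itself is harmless, being encoded by the filling-compatible quotient $G_i \to \mathbb{Z}$, so no new difficulty arises beyond invoking the constrained form of the rigid isomorphism criterion.
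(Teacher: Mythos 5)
Your argument follows the paper's proof essentially step for step: non-termination of the third procedure, combined with the RH-noPS hypothesis, rules out the elementary-splitting alternative of \cite[Thm. 5.1]{DG_CbDF}, and the cofinal family of fiber-and-orientation preserving isomorphisms $\phi_m$ of the fillings is then promoted by that theorem to an isomorphism $G_1\to G_2$ with the same properties (fiber and orientation surviving because the kernels $K_{m,i}$ lie in $F$ and are co-final), which the first procedure eventually finds. The one step you gloss over is why each $\phi_m$ automatically respects the peripheral structure of the fillings, as required to invoke the theorem; the paper gets this from the observation that $\bar P_j$ contains a large finite normal subgroup while every finite subgroup of $\bar G_2^{(m)}$ is conjugate into some $\bar Q_\ell$.
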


This Lemma is actually a consequence of a result obtained in a
collaboration of Vincent
Guirardel  and the author.

\begin{proof}  We assume that for all $m$ large enough, $\phi_m:  \bar
  G_1^{(m)} \to  \bar G_2^{(m)}$ is a fiber and orientation preserving
  isomorphism.  

Observe that eventually, $P_{1,j}/N_{1,j}^{(m)}$ contains a large
finite subgroup and an infinite order element normalizing it.  In
$G_2^{(m)}$ (for large $m$), all finite subgroups lie in  conjugates
of $P_{2,\ell}/N_{2,\ell}^{(m)}$ (by \cite[Lemma 4.3]{DG_CbDF}), and   since  
$G_2^{(m)}$ is eventually hyperbolic relative to this collection of
subgroups, any infinite order element normalizing a finite subgroup is in
the same conjugate of $P_{2,\ell}/N_{2,\ell}^{(m)}$. 
Thus, eventually each  $\phi_m$ must send, for each $j$, the group  $P_{1,j}/N_{1,j}^{(m)}$ on some conjugate
of some  $P_{2,\ell}/N_{2,\ell}^{(m)}$, ($\ell \leq k$). 
 Then, Theorem \cite[Thm. 5.2]{DG_CbDF}   
states that either $G_1$ or $G_2$  has a peripheral splitting over a
maximal cyclic, or a parabolic subgroup (which must be parabolic if we
assume that the third procedure does not terminate, hence in
contradiction with the assumption of the lemma), or there is an
isomorphism $\phi: G_1 \to G_2$ that commutes with infinitely many
$\phi_m$, up to composition with a conjugation in $\bar G_2^{(m)}$
(in other words, it makes a diagram    $ \begin{array}{ccc}  G_1 &
                                           \stackrel{\phi_\infty}{\to}
                                           & G_2
                                           \\  \downarrow & &
                                                              \downarrow
                                           \\  \bar G_1^{(m)}       &
                                           \stackrel{{\rm ad}_{\bar{g}}\circ \phi_m}{\to}
                                                                      &\bar G_2^{(m)} \end{array}
                                         $ commute, where $ {\rm
                                           ad}_{\bar{g}}$ is a conjugation). In this second circumstance,   $\phi$ has to preserve the fiber and orientation, since the $\phi_m$ do, and the kernels are co-final. 
\end{proof} 

\end{proof}

There is a slightly stronger version of Proposition \ref{prop;RHnoES} that we will need for the next part. 

Given a suspension $F\sd \langle t \rangle$, a transverse cyclic
peripheral structure is a tuple of elements of the form $ (t^{k_j}
f_j)_{j=1, \dots r} $, for $k_j \neq 0$ and $f_j \in F$.

A fiber-and-orientation preserving isomorphism between suspensions equipped with such structures is said to preserve the structure if it sends the conjugacy classes of the first exactly on the conjugacy classes of the second.

Let us amend our definition of RH-noPS, and say that a suspension with  a transverse cyclic
peripheral structure is RH-noPS relative to the transverse peripheral structure 
   if it is  properly hyperbolic
relative to a collection of polynomial subgroups of $F$, and the
suspension $F\sd_\phi \langle t\rangle$ has no non-trivial  
peripheral  splitting over
a subgroup of a parabolic subgroup, in which each element of the
transverse peripheral structure (which is a group) is conjugated to a factor.

The following Proposition is, as we said, similar to Proposition
\ref{prop;RHnoES}. The difference is in the presence of the transverse
cyclic peripheral structure (a minor difference) but also in the fact
that we had ambitionned to get the full list of fiber-and-orientation
preserving isomorphisms. This ambition is not realized unfortunately,
but enough is granted for the application in the next part. 

We keep
the notation $K_1^{(m)}, K_2^{(m)}$  for the normal subgroups
introduced in the proof of  Proposition
\ref{prop;RHnoES}.

\begin{prop}\label{prop;better}

 Let $F$ be a free group. There is an (explicit) algorithm that, given
 two automorphisms, $\phi_1, \phi_2$, and two transverse cyclic
 peripheral structures $\calP_1, \calP_2$ of $F\sd_{\phi_1} \langle
 t\rangle$  and   $F\sd_{\phi_2} \langle t\rangle$  respectively,
 terminates if  both $\phi_i$ are RH-noPS relative to their transverse
 peripheral structures, and  provides  
\begin{enumerate} 
   \item \label{point;1} either a  list of isomorphisms $F\sd_{\phi_1}
     \langle t \rangle \to F\sd_{\phi_2} \langle t \rangle$
     preserving fiber, orientation, and    transverse cyclic peripheral
     structure, and an integer $m$ such that 
for each $p\in \calP_2$, the centralizer of $\bar{p}$ in
$\overline{G_2}^{(m)}= G_2/K_2^{(m)}$ is the image of the centralizer of $p$
in $G_2$   
 and  such that, for  any other such isomorphism $\psi$, there is one, $\phi$, in the list, an element $g\in G_2$, such that for all $h\in G_1$, there is $z_h\in K_m$ for which $\psi(h)^g=\phi(h)z_h$.
   \item or  a certificate that     $F\sd_{\phi_1} \langle t \rangle$ and  $F\sd_{\phi_2} \langle t \rangle$ are not isomorphic by an isomorphism preserving fiber, orientation, and transverse cyclic peripheral structure;
   \item or a non-trivial peripheral splitting of either $F\sd_{\phi_i} \langle
     t\rangle$ over a cyclic subgroup, in which each element of the
     transverse peripheral structure is elliptic. 
\end{enumerate}
\end{prop}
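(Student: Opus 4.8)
The plan is to run essentially the same three parallel procedures as in the proof of Proposition \ref{prop;RHnoES}, but now to track the transverse cyclic peripheral structures throughout, to \emph{collect} the fiber-and-orientation preserving isomorphisms rather than to halt at the first one, and to exploit the \emph{full} classification of isomorphisms of the hyperbolic fillings provided by the solution of the isomorphism problem \cite{DGu_gafa} filtered through Corollary \ref{coro;orbit_fiber_Aut}, instead of the mere decision of Theorem \ref{thm;decide_fop_iso_for_hyp}. As before, by Proposition \ref{prop;compute_RHS} and \cite{GL} I first make both relative hyperbolic structures of $G_i=F\sd_{\phi_i}\langle t\rangle$ explicit, with the maximal parabolic subgroups $P_{i,j}=F_{i,j}\sd\langle r_{i,j}\rangle$ realized as suspensions of polynomial subgroups $F_{i,j}=P_{i,j}\cap F$. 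Every element of $\calP_1,\calP_2$ has nonzero transverse power, so it survives every filling, and its conjugacy class is preserved by any structure-preserving isomorphism.

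For incrementing $m$ I form the fillings $\bar G_i^{(m)}=G_i/K_{m,i}$ with $K_{m,i}=\langle\langle\cup_j N_{i,m,j}\rangle\rangle$ exactly as in Proposition \ref{prop;RHnoES}, certify their hyperbolicity with \cite{Pap} (which succeeds for large $m$ by \cite[Thm. 1.1]{O_DF}), and, since $K_{m,i}\subset F$, read each as a suspension $\bar G_i^{(m)}=(F/K_{m,i})\sd\langle\bar t\rangle$. For such $m$, the isomorphism solver \cite{DGu_gafa} together with Corollary \ref{coro;orbit_fiber_Aut} lets me enumerate the finite set $I_m$ of fiber-and-orientation preserving isomorphisms $\bar G_1^{(m)}\to\bar G_2^{(m)}$ up to conjugacy, and I discard those not sending the image of $\calP_1$ onto the image of $\calP_2$. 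In parallel I run the splitting-detection procedure of Proposition \ref{prop;RHnoES} (producing conclusion (3) if it halts on a non-trivial cyclic peripheral splitting) and a procedure enumerating honest morphisms $G_1\to G_2$ and $G_2\to G_1$, collecting those that are mutually inverse fiber-, orientation- and peripheral-structure preserving isomorphisms.

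The core is the correspondence between honest isomorphisms and the $I_m$. Any structure-preserving $\psi:G_1\to G_2$ sends $F_{1,j}$ to a conjugate of some $F_{2,\ell}$ (parabolics being intrinsically the polynomial subgroups) and index-$\le m$ subgroups to index-$\le m$ subgroups, so $\psi(K_{m,1})=K_{m,2}$ and $\psi$ descends to an element $\bar\psi_m\in I_m$; hence if some $I_m$ is empty I output conclusion (2). Conversely, since $K_{m+1,i}\subseteq K_{m,i}$ the $I_m$ form an inverse system of finite sets, and a coherent sequence $(\phi_m)\in\varprojlim I_m$, which exists by compactness once every $I_m\neq\emptyset$, is precisely the input of \cite[5.1]{DG_CbDF}: either $G_1$ or $G_2$ admits a peripheral splitting over an elementary subgroup, which under the RH-noPS hypothesis and the non-termination of the splitting procedure must be cyclic and is therefore produced as conclusion (3); or there is an honest isomorphism $\phi:G_1\to G_2$ agreeing, up to a conjugation in $\bar G_2^{(m)}$, with infinitely many $\phi_m$, and co-finality of the $K_{m,i}$ then forces $\phi$ to preserve fiber, orientation and peripheral structure, so $\phi$ is found by the honest enumeration. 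Matching each stabilized coherent class to such a lift, and fixing $m$ large enough that in addition, for every $p\in\calP_2$, the centralizer of $\bar p$ in $\bar G_2^{(m)}$ is the image of the centralizer of $p$ in $G_2$ (again true for large $m$ by \cite[Thm. 1.1]{O_DF}), yields conclusion (1): the conjugacy relating $\bar\psi_m$ to the chosen lift $\overline{\phi}$ reads $\overline{\psi(h)^g}=\overline{\phi(h)}$ for all $h$, i.e. $\psi(h)^g=\phi(h)z_h$ with $z_h\in K_m$, which is exactly the asserted completeness.

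The main obstacle I expect is knowing \emph{when to stop}, i.e. certifying that a given $m$ already exposes the complete list. This amounts to detecting that the images of $I_{m'}$ in $I_m$ have stabilized (the coherent part) and that each stabilized class is realized by an honestly enumerated isomorphism; that this stabilization occurs and that the stabilized coherent classes biject with the honest isomorphisms up to the stated $K_m$-and-conjugacy equivalence is exactly what \cite[5.1]{DG_CbDF} guarantees once elementary peripheral splittings are excluded. The two delicate points are the faithfulness of the descent $\psi\mapsto\bar\psi_m$ (distinct honest classes must be separated at some finite level, which follows from the co-finality of the filling kernels already invoked in Proposition \ref{prop;RHnoES}) and the verification of the centralizer condition, both controlled only for $m$ beyond an a priori non-explicit threshold supplied by the Dehn filling theorem.
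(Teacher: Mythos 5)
Your overall architecture (three parallel procedures, Dehn fillings of the parabolic suspensions, descent of honest isomorphisms to the fillings, and \cite[5.1]{DG_CbDF} to guarantee that one procedure halts) is the same as the paper's, but there is a genuine gap at the step where you ``enumerate the finite set $I_m$ of fiber-and-orientation preserving isomorphisms $\bar G_1^{(m)}\to\bar G_2^{(m)}$ up to conjugacy''. The tools you invoke do not produce such a list: the isomorphism solver of \cite{DGu_gafa} yields \emph{one} isomorphism when one exists, and Corollary \ref{coro;orbit_fiber_Aut} is only a yes/no decision about the existence of a fiber-and-orientation preserving one; neither enumerates the set of all such isomorphisms modulo conjugacy, and that set need not even be finite when $\Out(\bar G_2^{(m)})$ is infinite. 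The paper closes this hole by first certifying, via \cite[Coro.~3.4]{DGu_gafa}, that the filled groups are \emph{rigid} (no compatible splitting over a virtually cyclic group with infinite center), and only then computing the complete finite list $\calL_m$ by \cite[Coro.~3.5]{DGu_gafa}. Without this rigidity certification your $I_m$ is not a computable finite object, and the whole comparison between honest isomorphisms and filled ones collapses.

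The second problem is your stopping criterion. Detecting that ``the images of $I_{m'}$ in $I_m$ have stabilized'' is not something an algorithm can certify at any finite stage: stabilization is a statement about all larger $m'$. The paper's halting condition is different and effectively checkable: run the honest enumeration of mutually inverse isomorphisms $G_1\to G_2\to G_1$ accumulating a list $\calL$, and stop as soon as, for some $m$ with certification of hyperbolicity, rigidity and the centralizer condition (the paper's Lemma \ref{lem;in_the_DF}, not merely \cite[Thm.~1.1]{O_DF}), the quotient map sends $\calL$ \emph{onto} $\calL_m$. Surjectivity onto a finite computed list is decidable, and it suffices for completeness: any structure-preserving $\psi$ descends to an element of $\calL_m$, hence agrees with some $\phi\in\calL$ up to conjugacy and multiplication by $K_m$, which is exactly clause (\ref{point;1}). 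Your reduction of termination to \cite[5.1]{DG_CbDF} is the right idea, but as written your algorithm has no implementable test for when the list is complete.
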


The first point means that the list contains all isomorphisms up to conjugacy in $G_2$ and multiplication by a large element of $F$.

\begin{proof}
As in the proof of Proposition \ref{prop;RHnoES}, we use three procedures. 

The first procedure is the enumeration of morphisms $G_1 \to G_2 \to
G_1$. This procedure has an incrementing list $\calL$, which is empty
at the beginning. Every time  mutually inverse isomorphisms preserving
the transverse peripheral structure, the orientation and sending the fiber into the fiber are found, such that the isomorphism $G_1 \to G_2$ is not conjugated to any item of the list $\calL$,  the procedure stores  $G_1\to G_2$ into  $\calL$. We precise below when this first procedure is set to stop.

The second one slightly differs from Proposition \ref{prop;RHnoES}. We
still compute  $\bar G_1^{(m)}$ and $\bar G_2^{(m)}$ (and the images
of the transversal peripheral structure in them), and try to certify
that they are hyperbolic and that the assumption of point 1, on the centralizers, is
satisfied (which happens if $m$ is large enough, by Lemma
\ref{lem;in_the_DF}). Let us call this ``certification $\alpha$ for
$m$''. When this is the case, using \cite[Coro. 3.4]{DGu_gafa}, we
check whether these groups are rigid (in the sense that they have no
peripheral splitting over a virually cyclic group with infinite
center, in which each element of the transverse peripheral structure
is conjugated in a vertex group) and if they are we proceed and compute by
\cite[Coro. 3.5]{DGu_gafa} the complete list of isomorphisms  $\bar
G_1^{(m)} \to \bar G_2^{(m)}$ up to conjugacy in  $\bar
G_2^{(m)}$. Once this is done, we check which one of them are
fiber-and-orientation preserving, and preserve the transverse
peripheral structure,  and we record them in a list $\calL_m$.

The second procedure is set to stop if  an $m$ is found so that there is no fiber-and-orientation preserving isomorphisms  $\bar G_1^{(m)} \to \bar G_2^{(m)}$ that preserves the transversal peripheral structure.

The first procedure (which was run in parallel with the second) is set to stop   if a list $\calL$ is found and an integer $m$ 
 is found so that the following three conditions are satisfied. First,  ``certification $\alpha$'' for $m$ is done. 
Second,   $\calL_m$ is completely computed, and third, the currently computed list $\calL$ surjects on $\calL_m$, by the natural quotient map.

The third procedure  looks for a non-trivial peripheral splitting over a
cyclic subgroup  which is maximal cyclic or parabolic, 
in which each element of the transverse peripheral
structure is conjugated to a vertex group  (it is similar to that of   Proposition \ref{prop;RHnoES}). 

Observe that, if  the first procedure stops, we have in $\calL$ a list  of isomorphisms $F\sd_{\phi_1} \langle t \rangle \to F\sd_{\phi_2} \langle t \rangle$   preserving fiber (by Lemma \ref{lem;wellknown} (\ref{it_3} $\implies$ \ref{it_2})), orientation, and transverse cyclic peripheral structure, such that any other such isomorphism differs from one in the list by a conjugation, and the multiplication by elements in the fiber (in the sense of \ref{prop;better}-(\ref{point;1})). 
Observe also that if the second procedure stops, we have (as in \ref{prop;RHnoES}) a certificate that  $F\sd_{\phi_1} \langle t \rangle$ and  $F\sd_{\phi_2} \langle t \rangle$ are not isomorphic by an isomorphism preserving fiber, orientation, and transverse cyclic peripheral structure.

Again, we can conclude by the following lemma.
\begin{lemma}
Assume that $G_1$ and $G_2$ are  RH-noPS relative to their transverse
cyclic peripheral structures.    If the third procedure and the second
procedure never terminate, then the first procedure terminates ({\it
  i.e} there is a finite list of isomorphisms (preserving
the transverse peripheral structure, the orientation and sending the
fiber into the fiber) from $G_1$ to $G_2$, and
an integer $m$ as in (\ref{prop;better}-(\ref{point;1})) 
 such that any   isomorphism preserving fiber, orientation, and
 peripheral structure $\bar G_1^{(m)} \to \bar G_2^{(m)} $  is the
 image of an isomorphism  $G_1 \to G_2 $ in the list).
\end{lemma}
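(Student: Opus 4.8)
The plan is to prove this by combining the Dehn filling rigidity result \cite[5.1]{DG_CbDF} with a compactness argument on the finite lists $\calL_m$. First I would record the structural consequences of the standing assumptions. If the third procedure never terminates then neither $G_1$ nor $G_2$ admits a reduced peripheral splitting over a cyclic subgroup; together with the RH-noPS hypothesis (no reduced peripheral splitting over a subgroup of a parabolic, except the trivial one) this says that $G_1$ and $G_2$ have no reduced peripheral splitting over an elementary subgroup at all. If the second procedure never terminates then, for every $m$ large enough for ``certification $\alpha$'' to hold (which happens eventually by Lemma \ref{lem;in_the_DF}) and for which \cite[Coro. 3.4]{DGu_gafa} certifies that $\bar G_1^{(m)},\bar G_2^{(m)}$ are rigid, the computed list $\calL_m$ of fiber-, orientation- and peripheral-structure-preserving isomorphisms $\bar G_1^{(m)}\to \bar G_2^{(m)}$ (up to conjugacy) is nonempty. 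I would first check that there are cofinally many such ``good'' $m$: since $G_i$ has no peripheral elementary splitting, a compatible virtually cyclic splitting of $\bar G_i^{(m)}$ would, for $m$ large, pull back to such a splitting of $G_i$ by the filling technology underlying \cite{DG_CbDF}, so the fillings are rigid for all large $m$.

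Then comes the heart of the argument, organised around the inverse system of the $\calL_m$. Because the kernels are nested ($K_{m+1}\subseteq K_m$, as $N_{i,m+1,j}\subseteq N_{i,m,j}$), each $\bar G_i^{(m+1)}$ surjects onto $\bar G_i^{(m)}$, and since the subgroups $N_{i,m,j}$ are characteristic in the $F_{i,j}$ every structure-preserving isomorphism at level $m+1$ descends to level $m$; this yields reduction maps $\calL_{m+1}\to \calL_m$ making $(\calL_m)$ an inverse system of finite sets. Call $\psi\in\calL_m$ \emph{bad} if it has no preimage at all sufficiently deep levels. The key claim is that only finitely many levels carry a bad element: if infinitely many levels $m$ carried a bad $\psi_m$, I would apply \cite[5.1]{DG_CbDF} to this (cofinal) sequence of isomorphisms; since there is no peripheral elementary splitting, the theorem produces a genuine isomorphism $\phi\colon G_1\to G_2$ whose reduction equals $\psi_m$ up to conjugacy for infinitely many $m$, and the reductions of $\phi$ at deeper levels then furnish preimages of those $\psi_m$, contradicting badness.

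With that claim in hand I would fix $m$ beyond the last bad level (and large enough for certification $\alpha$ and rigidity). Every $\psi\in\calL_m$ is then good, so by K\"onig's lemma on the finitely branching tree of partial coherent lifts it extends to a coherent sequence $(\psi_{m'})_{m'\ge m}\in\varprojlim\calL_{m'}$; applying \cite[5.1]{DG_CbDF} to this sequence yields $\phi\colon G_1\to G_2$ with $\bar\phi^{(m)}=\psi$ up to conjugacy. As in the proof of the corresponding lemma within Proposition \ref{prop;RHnoES}, $\phi$ preserves fiber, orientation and the transverse cyclic peripheral structure, because the $\psi_{m'}$ do and the kernels are cofinal. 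Hence every element of $\calL_m$ is the image of an isomorphism $G_1\to G_2$. Since the first procedure enumerates all such isomorphisms and stores one representative per conjugacy class in $\calL$, after finitely many steps $\calL$ contains lifts surjecting onto $\calL_m$; at that moment the three stopping conditions of the first procedure (certification $\alpha$, $\calL_m$ computed, and $\calL$ surjecting onto $\calL_m$) are met, so it terminates, which is exactly the conclusion of the lemma.

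I expect the main obstacle to be the invocation of \cite[5.1]{DG_CbDF} at the right level of generality: one must be sure it applies to an arbitrary (not necessarily coherent) cofinal sequence of structure-preserving isomorphisms between the fillings and returns a common lift for infinitely many of them, and that the centralizer hypothesis built into certification $\alpha$ (Lemma \ref{lem;in_the_DF}) is precisely what its hypotheses require. The secondary delicate point is the ``no bad levels'' step, where one must make sure that ``preimage up to conjugacy'' behaves functorially under the reduction maps, so that the reductions of $\phi$ genuinely contradict badness rather than merely doing so up to an uncontrolled conjugacy.
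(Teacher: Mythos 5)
Your proposal is correct and follows essentially the same route as the paper: both arguments hinge on applying \cite[5.1]{DG_CbDF} to a cofinal sequence of structure-preserving isomorphisms between the fillings, using the absence of elementary peripheral splittings (RH-noPS together with non-termination of the third procedure) to extract a genuine isomorphism $G_1\to G_2$ inducing infinitely many of them, contradicting non-liftability. The paper's proof is just the direct contradiction (assume for every $m$ a non-lifting $\psi_m$ and extract a convergent subsequence), so your inverse-system and K\"onig's-lemma scaffolding, while harmless, is an elaboration the paper does not need.
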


\begin{proof}

We endow $G_1$ and $G_2$ with the relatively hyperbolic structure
consisting of their parabolic subgroups, and for each hyperbolic
element in the transverse peripheral structure, the conjugates of the maximal cyclic
subgroup containing it. This  still makes a relatively hyperbolic
group (see \cite[1.7]{OsinIJAC} for instance). 
In this context, the assumption that the third procedure does not stops says that  the groups $G_1$ and
$G_2$ have no splitting which is peripheral (for this extended
peripheral structure),  over an elementary subgroup.

 Observe that, eventually, the groups $\bar
G_1^{(m)}$ and $\bar G_2^{(m)}$ are rigid (in the sense of the
second procedure). Indeed, if it is not the
case for the sequence $(\bar G_1^{(m)})$ for instance, then after
passing to a subsequence, for each $m$ in the subsequence, there are infinitely many automorphisms of $\bar G_1^{(m)}$  preserving the
transverse peripheral structure, the orientation and the fiber (namely
the iterates of a Dehn twist over an edge group of a splitting
falsifying rigidity).  From there,  by
\cite[Corolary 5.10]{DG_CbDF},  one gets that $G_1$ must have a splitting of a type contradicting
the previous paragraph.

 Therefore, for sufficiently large $m$, the second procedure can
compute the list $\mathcal{L}_m$ of isomorphisms preserving fiber
orientation, and peripheral 
structure  between $\bar G_1^{(m)}$ and 
$ \bar G_2^{(m)} $.   Since the second procedure does not
stop,  
  for all sufficiently large $m$, this list is not empty.

Assume that the first procedure does not stop.        Then,  for all
finite list $\calL$ of isomorphisms $G_1 \to G_2$ (preserving transverse
peripheral structure, fiber and orientation), and all $n$ there exists
$m>n$ and $\psi_{m}: \bar G_1^{(m)} \to \bar
G_2^{(m)} $,  an  isomorphism preserving fiber
orientation, and peripheral structure, that does not commute with any
element of the list $\calL$, even after conjugation by an element of
$\bar G_2^{(m)} $.  Choosing a sequence of lists $\calL$ exhausting the set
of  isomorphisms $G_1 \to G_2$ (with the prescribed preservation
property), and a increasing sequence of integers $n$, one gets a
sequence of $\psi_m$ as above, for a sequence of integers $m$ going to infinity.

 We may
use Theorem \cite[5.2]{DG_CbDF}   to extract a subsequence of
the isomorphisms $\psi_m$, and to find an isomorphism $\psi_\infty : G_1 \to G_2$ commuting with
the composition of $\psi_m$ with a conjugation (for all $m$ in the
extracted sequence), as our previous use of it in Lemma \ref{lem;call52}.  By cofinality, such a
$\psi_\infty$  has to preserve the fiber and the
orientation.  Since the transverse peripheral structure consists
exactly of the cyclic groups among  the parabolic groups of the
relatively hyperbolic structures under consideration, we deduce that
$\psi_\infty$ globally preserves the transverse cyclic
structure.  By \cite[Lemma 3.13]{DG_CbDF}, we know that, for
sufficiently large $m$, non conjugate parabolic subgroups in $G_2$ map
on non conjugate subgroups in $\bar G_2^{(m)}$.  
Since each 
$\psi_m$  preserves the transverse peripheral structure,  and (after
extraction of subsequence) commute with $\psi_\infty$,  it follows
that $\psi_\infty$ sends the transverse peripheral structures of
$G_1$  to that of $G_2$. Therefore  $\psi_\infty$ is eligible for
being recorded in the list of the first procedure, and thus eventually
appears in the sequence of our lists that served to define the
$\psi_m$. But that is a contradiction, by definition of the $\psi_m$.

\end{proof}

\end{proof}

\subsection{Conjugacy of two relatively hyperbolic automorphisms without parabolic splitting}

The  class of RH-noPS automorphisms, is larger than that of RH-noES. We can treat it as well, but it requires a little care.

Recall that if $F\sd_{\phi_1} \langle t \rangle$ splits as a
  graph of groups, then vertex groups and edge groups are suspension
  of subgroups of $F$ that are vertex groups and edge groups
  (respectively) of a graph of group decomposition of $F$ (see
  \cite[Lemma 2.6]{DCPout}).

\begin{prop}\label{prop;gdg_with_small_list}
 Let $\mathbb{X}_1, \mathbb{X}_2$ be graph-of-groups decompositions of $F\sd_{\phi_1} \langle t \rangle$ and $F\sd_{\phi_2} \langle t \rangle$ respectively, over cyclic subgroups.  

Assume that $\Phi_X : X_1 \to X_2$ is an isomorphism of the underlying
graphs of the decompositions, and that $m$ is an integer, and, for all vertex $v\in
X_1^{(0)}$,  $\calL_v$ is a list of isomorphisms from $\Gamma_{1,v}$ the vertex group
of $v$ in $\bbX_1$  to $\Gamma_{2,\Phi_X(v)} $ the group of $\Phi_X(v)$ in
$\bbX_2$, that are all  as in (\ref{prop;better}-(\ref{point;1}))
(for  $G_i=\Gamma_{i,v}$) for 
the transverse peripheral structure consisting of the adjacent edge subgroups,
and the fiber and orientation defined by \cite[Lemma 2.6]{DCPout}.

The following are equivalent.

\begin{enumerate}
\item There is an isomorphism $\Phi: \pi_1(\bbX_1, \tau) \to \pi_1(\bbX_2, \Phi_X(\tau))$ that is fiber and orientation preserving, that induces a graph-of-groups isomorphism, and  that induces $\Phi_X$ at the level of graphs.

\item \label{point;2} There is $\Phi_0=(\Phi_X, (\phi_v), (\phi_e), (\gamma_e))$ an isomorphism of graphs of groups, 
such that $\phi_v$   is fiber-and-orientation preserving, and 
 such that the linear diophantine equation (\ref{eq;Dioph_lin}) for
 the   $\gamma_i$ being the
images of a fixed basis of $F$, and of $t$, by $\Phi_0$,    has a
 solution.

\item \label{point;3} There is $\Phi'_0=(\Phi_X, (\phi_v'), (\phi_e),
  (\gamma_e'))$ an isomorphism of graphs of groups, such that
  $\phi'_v\in \calL_v$, and such that the linear diophantine equation
  (\ref{eq;Dioph_lin})  for the $\gamma_i$ being the
images of a fixed basis of $F$, and of $t$, by $\Phi'_0$,   has a solution. 

 \end{enumerate}
\end{prop}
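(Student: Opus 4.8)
The plan is to establish the two equivalences (1)$\Leftrightarrow$(2) and (2)$\Leftrightarrow$(3) separately. The first equivalence isolates the passage between a mere graph-of-groups isomorphism inducing $\Phi_X$ and an honest fiber-and-orientation preserving isomorphism of $G$; this passage is governed by the small modular group through the Diophantine system (\ref{eq;Dioph_lin}), exactly as in Proposition \ref{prop;orbit_fiber}. The second equivalence only replaces arbitrary fiber-and-orientation preserving vertex maps by representatives drawn from the precomputed lists $\calL_v$, and is where the combinatorics of Proposition \ref{prop;better} enters.

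For (1)$\Rightarrow$(2) I would take $\Phi_0=\Phi$. Since $\Phi$ is globally fiber-and-orientation preserving with $\Phi(F)=F$, restricting to a vertex group $\Gamma_v$ (a suspension by Lemma \ref{lem;vert_are_susp}, whose fiber is $\Gamma_v\cap F$) shows that $\phi_v$ sends $(F)_v$ to $(F)_{\Phi_X(v)}$ and, because $\Phi(t)\in tF$, preserves the transverse orientation; hence each $\phi_v$ is fiber-and-orientation preserving. Moreover $\delta(\Phi(f_j))=0$ for the basis elements $f_j\in F$ and $\delta(\Phi(t))=1$, so every right-hand side of (\ref{eq;Dioph_lin}) vanishes and the system is solved by $r_{s_e}=0$. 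Conversely, for (2)$\Rightarrow$(1), solvability of (\ref{eq;Dioph_lin}) for $\gamma_j=\Phi_0(f_j),\Phi_0(t)$ furnishes, via Proposition \ref{prop;orbit_fiber} applied to $G_2$ and $\bbX_2$, an element $\eta\in Mod_{\bbX_2}$ whose image on $H_1(G_2)$ sends $\overline{\Phi_0(f_j)}$ into $\bar F$ and $\overline{\Phi_0(t)}$ into $\bar t\bar F$. Then $\Phi:=\eta\circ\Phi_0$ satisfies characterization (\ref{it_4}) of Lemma \ref{lem;wellknown}, hence is fiber-and-orientation preserving by (\ref{it_4})$\Rightarrow$(\ref{it_2}); and since $\eta$ is modular it acts as the identity on $X_2$, so $\Phi$ still induces $\Phi_X$ and remains a graph-of-groups isomorphism. (The hypothesis that the $\phi_v$ be fiber-and-orientation preserving is not actually needed for this direction, but it is automatically available from (1)$\Rightarrow$(2).)

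The implication (3)$\Rightarrow$(2) is immediate, as every $\phi'_v\in\calL_v$ is fiber-and-orientation preserving by Proposition \ref{prop;better}(\ref{point;1}), so $\Phi'_0$ already witnesses (2). The delicate direction, which I expect to be the main obstacle, is (2)$\Rightarrow$(3). Each $\phi_v$ is a fiber-and-orientation preserving isomorphism of vertex groups respecting the transverse cyclic peripheral structure carried by the adjacent edges, so by the completeness clause of Proposition \ref{prop;better}(\ref{point;1}) there are $\phi'_v\in\calL_v$, an element $g_v$ of the target vertex group, and corrections $z_h\in K_m$ with $\phi_v(h)^{g_v}=\phi'_v(h)z_h$ for all $h$. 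I would then assemble $\Phi'_0$ from these $\phi'_v$, absorbing the conjugations $g_v$ into new edge elements $\gamma'_e$ (an inert-twist type adjustment keeping diagram (\ref{diagram;Bass}) commutative) and checking that the residual $K_m$-corrections, which lie in the fiber, can likewise be accommodated in the edge data so that $\Phi'_0$ is a genuine graph-of-groups isomorphism inducing $\Phi_X$.

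The crucial point to verify is that the system (\ref{eq;Dioph_lin}) remains solvable for $\gamma'_j=\Phi'_0(f_j),\Phi'_0(t)$. Because conjugation and the $K_m$-corrections all lie in $F$, one has $\delta(\gamma'_j)=\delta(\gamma_j)$, so the right-hand sides are unchanged; meanwhile the edge-occurrence data $n(\gamma_j,e)$ are altered only through the new $\gamma'_e$, and these alterations lie precisely in the span of the modular (Dehn twist) directions indexed by the generating sets $S_e$, hence are reabsorbed by adjusting the unknowns $r_{s_e}$. This is exactly the role of the integer $m$ and of the centralizer condition in Proposition \ref{prop;better}(\ref{point;1}): they guarantee that the large-fiber ambiguity between $\phi_v$ and $\phi'_v$ stays within reach of the modular group, so that passing from $\Phi_0$ to $\Phi'_0$ preserves solvability of the Diophantine system. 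Tracking these cancellations in normal form, and confirming that the fiber corrections never disturb the edge counts outside the modular span, is the computation I expect to require the most care.
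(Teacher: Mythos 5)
Your architecture coincides with the paper's: (1)$\Leftrightarrow$(2) via Lemma \ref{lem;vert_are_susp} and Proposition \ref{prop;orbit_fiber}, (3)$\Rightarrow$(2) trivially, and (2)$\Rightarrow$(3) by trading each $\phi_v$ for a representative in $\calL_v$ using the completeness clause of (\ref{prop;better}-(\ref{point;1})). Those three easy steps are fine as you present them. The problem is that the step you defer --- ``tracking these cancellations in normal form'' --- is not a routine verification but the actual content of the proposition, and the one sentence you do commit to about it is wrong as stated. You claim the alterations to the edge-occurrence data $n(\gamma_j,e)$ ``lie in the span of the modular directions, hence are reabsorbed by adjusting the unknowns $r_{s_e}$.'' But the $n(\gamma_j,e)$ are the \emph{coefficients} of the system (\ref{eq;Dioph_lin}), not its right-hand side; changing the coefficient matrix of a linear system cannot in general be compensated by adjusting the unknowns, so if those counts really did change, solvability could change with them. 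What the paper proves instead is the stronger and cleaner fact that, once the edge elements $\gamma'_e$ are chosen correctly, every discrepancy between $\Phi_0$ and $\Phi'_0$ on a normal form is a product of elements lying both in $F$ and in vertex groups; hence $n(\Phi'_0(f),e')=n(\Phi_0(f),e')$ and $\delta(\overline{\Phi'_0(f)})=\delta(\overline{\Phi_0(f)})$ for every basis element and for $t$, so the two Diophantine systems are syntactically identical and nothing needs to be reabsorbed.

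The genuinely delicate point hiding inside your phrase ``the residual $K_m$-corrections can be accommodated in the edge data'' is the choice of the $\gamma'_e$. A priori, $\phi_v(c_e)^{\gamma_e}$ and $\phi'_v(c_e)^{\gamma'_e}$ agree with $c_{\Phi_X(e)}$ only up to an element $h_e$ of the target vertex group which is merely known to centralize $\overline{\phi'_v(c_e)}$ in the Dehn-filled quotient; moreover $\gamma'_e$ is only determined up to left multiplication by the centralizer of the attached edge group, and for a transverse cyclic edge group that centralizer contains elements with $\delta\neq 0$, which would wreck the comparison of the two systems. This is exactly where the centralizer condition of (\ref{prop;better}-(\ref{point;1})), i.e.\ Lemma \ref{lem;in_the_DF}, must be invoked: it permits writing $h_e=z_ef_e$ with $z_e$ a genuine centralizer element and $f_e\in F$, after which the centralizer freedom in $\gamma'_e$ absorbs $z_e$ and yields $(\gamma'_e)^{-1}\gamma_e\in F$. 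You correctly name the hypothesis that does this work, but without carrying out this decomposition your $\Phi'_0$ is not yet known to differ from $\Phi_0$ only by fiber elements supported in vertex groups, so the final identification of the two Diophantine systems remains unjustified.
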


\begin{proof}

The first point implies the second: by Lemma \cite[2.6]{DCPout},
  all vertex groups are suspensions of their intersections with $F$, therefore if $\Phi$ preserves the fiber and orientation, so do all $\phi_v$, and the equation (\ref{eq;Dioph_lin}) admits an obvious solution (the null solution).

The second point implies the first, because, by Proposition
\ref{prop;orbit_fiber},  the system (\ref{eq;Dioph_lin}) has a
solution if and only if there is a modular 
graph-of-group  automorphism of $\pi_1(\bbX_2, \Phi_X(\tau))$  that sends $\Phi_0(F)$ exactly on $F$, and preserves orientation.

The third point obviously implies the second one. 

We need to show that the second point implies the third one. This is
a more subtle part. It basically says that if there is an isomorphism
of graph of groups whose orbit under the small modular group
intersects the set of fiber-preserving isomorphisms, then there is one
that is accessible to us, which might not  quite
be in the same orbit for the action of the small modular group, but
that share the property that its orbit intersects  the set of
fiber-preserving isomorphisms. The key is in the assertions, in Lemma \ref{lemma;use_dehn} that the
different products are in $F$, and this is ensured by the ({\it a
  priori} non immediate) assumption
(\ref{prop;better}-(\ref{point;1})).

  So,  we have $\Phi_0=(\Phi_X, (\phi_v), (\phi_e), (\gamma_e))$  an
   isomorphism of graphs of groups, as in the second point.

By assumption on $\calL_v$, the given isomorphisms $\phi_v$ differ
from isomorphisms in $\calL_v$ by conjugation (in the target), and
multiplication by elements of $F$ that are also in the vertex group $G_v$. By composing with inert twists (vanishing in $\Out(F\sd_{\phi_2} \langle t \rangle)$), we can assume that each $\phi_v$ has same image as an element of   $\calL_v$ in the  Dehn Filling reduction $\overline{\Gamma_v}^{(m)} \to\overline{\Gamma_{\phi_X(v)}}^{(m)}$.

\begin{lemma}\label{lemma;use_dehn}
 There is an isomorphism of graphs of groups $$\Phi'_0 = (\Phi_X,
 (\phi'_v), (\phi_e), (\gamma'_e))$$ such that $\phi'_v \in \calL_v$
 has same image as $\phi_v$ in the Dehn Filling reduction
 $\overline{\Gamma_v}^{(m)} \to\overline{\Gamma_{\phi_X(v)}}^{(m)}$,
 and such that for all edge $e$,  $(\gamma'_e)^{-1}  \gamma_e \in F $,
 and also  $ \gamma_e (\gamma'_e)^{-1} \in F $, and  $(\gamma'_e)  \gamma_e^{-1} \in F $.
\end{lemma}  

\begin{proof}
 We thus construct  $\Phi'_0$ using these elements $\phi'_v \in \calL_v$. The morphisms $\phi_e$ are given by the marking of the cyclic edge groups. We need that there exists elements $\gamma'_e$ completing the collection into an isomorphism of graph-of-groups, but this is actually the condition that, for $v=o(e)$,  $\phi'_{v}$ preserve the peripheral structure 
  of the adjacent cyclic edge groups. 
Note that one can choose the $\gamma'_e$ up to a multiplication on the left by an element of $\Gamma_{o(e)}$ centralising $i_e(\Gamma_e)$. 

Once such elements $\gamma_e'$ are chosen,  $\Phi'_0$ is defined. Recall that on a Bass generator  $e\in X^{(1)} \setminus \tau$,  $\Phi'_0 (e) =  (\gamma'_{\bar e})^{-1}   \Phi_X(e) (\gamma'_e)$. 
We need to compute how $\Phi'_0$ differs from $\Phi_0$ on Bass generators. 

Let us call $c_e$ the marked generator of the edge group $\Gamma_e$. To make notations readable, we will still write $c_e$ for $i_{e}(c_e)$. 

One has $c_{\Phi_X(e)} = \phi_v (c_e)^{\gamma_e}$ by Bass diagram
\ref{BassDiagram}. 
By virtue of $\phi_v'$ preserving the peripheral structure (for
$v=o(e)$) this is also $=  (\phi'_v (c_e))^{h_e}  )^{\gamma_e}$, for
some $h_e \in \Gamma_{\phi_X(v)}$ which can be chosen up to left
multiplication by an element centralising $\phi'_v (c_e)$. By Bass
diagram \ref{BassDiagram}  (for $\phi'_v$) this is $=  ((c_{\Phi_X(e)})^{(\gamma'_e)^{-1}} )^{h_e}  )^{\gamma_e}  $. It follows that 
$(\gamma'_e)^{-1} h_e \gamma_e$ centralises $c_{\Phi_X(e)}$, and lies in $\Gamma_{\phi_X(v)}$, for $v=o(e)$.

Recall that  $\gamma'_e$ can be chosen  up to a multiplication on the left by an element of $\Gamma_{o(e)}$ centralising $i_e(\Gamma_e)$. By a right choice of the collection of $\gamma'_e$ (or, in different words, by the right application of Dehn twists), we may assume that $(\gamma'_e)^{-1} h_e \gamma_e= 1$.

By virtue of $\phi'_v$ coinciding with $\phi_v$ in the Dehn filling
$\overline{\Gamma_v} ^{(m)}$, the image $\bar h_e$ of $h_e$  actually
centralises $\overline{\phi'_v(c_e)}$. By assumption on $m$ (see
\ref{prop;better} -(\ref{point;1})), the
centralizers of the transverse peripheral structure in $\bar{G_2}^{m}$
are the images of the centralizers in $G_2$, so this makes $h_e = z_e f_e$ for $z_e$ centralising   $\phi'_v (c_e)$ and $f_e$ in $F$.
Thus, we may choose it so that $z_e=1$, hence $h_e\in F$. Finally,
since $F$ is normal, and  $(\gamma'_e)^{-1} h_e \gamma_e= 1$, it
follows that   $(\gamma'_e)^{-1}  \gamma_e \in F$.  The two other
relations are obtained respectively by conjugating  by $\gamma_e^{-1}$
($F$ is normal) and taking the inverse of the later.
\end{proof}

We can resume the proof of the Proposition (second point implies the third one).
Finally, we need to check that the automorphism $\Phi'_0$ provided by
the Lemma is suitable. We can compare the images of the Bass generator
$e$, namely $\Phi_0(e)$ to $\Phi_0'(e)$.
\begin{lemma} For all edge $e$,  $\Phi_0'(e)^{-1} \Phi_0(e)  \in F$,
  and for all edge $e'$, the number $n(\Phi_0'(e)^{-1} \Phi_0(e), e')$
  that counts the number of occurences of $e'$ in the normal form of    
                                 $\Phi_0'(e)^{-1} \Phi_0(e) $, minus
                                 the number of occurences of
                                 $\overline{ e'}$ (see \S \ref{para;recall}), is $0$. 
\end{lemma} 
\begin{proof}
 Recall that  $\Phi_0(e) = \gamma_{\bar e}^{-1} \phi_X(e) \gamma_e$ and 
 $\Phi_0'(e) = (\gamma'_{\bar e})^{-1} \phi_X(e) \gamma'_e$. The
 difference is therefore  
$$\Phi_0'(e)^{-1} \Phi_0(e)  =   (\gamma'_e)^{-1}  \phi_X(e)^{-1}
(\gamma'_{\bar e}  \gamma_{\bar e}^{-1})    \phi_X(e)     \gamma_e  $$ 
which can also be written 
$\Phi_0'(e)^{-1} \Phi_0(e)  = 
  (\gamma'_e)^{-1}      (\gamma'_{\bar e}  \gamma_{\bar  e}^{-1})^{
    \phi_X(e)}    \gamma_e   $ and slightly less naturally, 
$\Phi_0'(e)^{-1} \Phi_0(e)  = (  \gamma_e (\gamma'_e)^{-1} (
                                 \gamma'_{\bar e} \gamma_{\bar e
                                 }^{-1} )^{\phi_X(e )} )^{\gamma_e}$.
 
  Since we established that  $\gamma_e (\gamma'_e)^{-1} \in F$, 
$\gamma'_{\bar e} \gamma_{\bar e }^{-1} \in F$ and $F$ is normal, $\Phi_0'(e)^{-1} \Phi_0(e) \in F$.

Moreover, since all factors in the product $(  \gamma_e (\gamma'_e)^{-1} (
                                 \gamma'_{\bar e} \gamma_{\bar e
                                 }^{-1} )^{\phi_X(e )} )^{\gamma_e}$
                                 are in vertex groups, except
                                 $\phi_X(e )$ and $\phi_X(e )^{-1}$
                                 (which both appear once). Therefore,
                                 for all edge $e'$, the quantity
                                 $n(\gamma'_{\bar e} \gamma_{\bar e
                                 }^{-1}, e') $  is $0$.
\end{proof}

We may now finish the argument and show that the system of equations
(\ref{eq;Dioph_lin})  for the $\gamma_i$ being the images by
$\Phi_0'$ of a basis of $F$, and of $t$,  has a solution.

Consider an element $f$ of the given basis of $F$. Write the normal
form in $\pi_1(\bbX_1, \tau)$ as $f = g_0e_1g_1e_2 \dots
e_ng_{n+1}$. The normal form of its image  by $\Phi_0$ in
$\pi_1(\bbX_2, \Phi_X(\tau)) $ is thus 
$$\Phi_0(f) = \phi_{v_0} (g_0) \, \Phi_0(e_0) \, \dots  \, \Phi_0(e_n) \,   \phi_{v_{n+1}} (g_{n+1}).  $$

By  assumption
(\ref{prop;better}-(\ref{point;1})), each $\phi_{v_i}(g_i)$ differs from $\phi'_{v_i}$ by a conjugation
(say by $ \xi_i$),
and the multiplication on the right by an element (say $f_{i,\ell}$)
of $F$, that lies in a vertex group (hence $n(f_{i,\ell}, e') = 0$ for
all $e'$). 
We also established in the lemma that $\Phi_0'(e_i)^{-1} \Phi_0(e_i)  \in F$ (call
it $f_{i,r}$), and that $n(f_{i,r}, e') = 0$ for
all $e'$).

We thus get  
$$\Phi_0(f) = (\phi'_{v_0} (g_0))^{\xi_0}   \,  f_{0, \ell} \,
\Phi'_0(e_0) f_{0, r} \, \dots  $$ $$ \hfill \dots (\phi'_{v_{n}} (g_{n}))^{\xi_n} \,
f_{n, \ell} \, \Phi'_0(e_n) \, f_{n,r}   \, (\phi'_{v_{n+1}}
(g_{n+1}))^{\xi_{n+1}}\, 
f_{n+1, \ell}.$$

We first observe that for all $e$, $n( \Phi_0(f) ,e ) = n(\Phi'_0(f)
,e)$ since all $n(f_{i,\ell}, e) =0$ and all $n(f_{i,r}, e)=0$.

In $H_1 ( F\sd_{\phi_2} \langle t \rangle)$ this normal form turns
into $$\overline{\Phi_0(f)} = \left( \prod_{i=0}^{n+1} \overline{ \phi'_{v_i}
  (g_i) } \right)  \times \left( \prod_{i=0}^{n}   \overline{ \Phi'_0(e_i)}
\right)   \times (f_{tot}),$$ where $f_{tot}$ is  the image of
$\prod_i f_{i,\ell} f_{i,r}$ in  $H_1$, hence is in the image of $F$.

Of course, $$\overline{\Phi'_0(f)} = \left( \prod_{i=0}^{n+1} \overline{ \phi'_{v_i}
  (g_i) } \right)  \times \left( \prod_{i=0}^{n}   \overline{ \Phi'_0(e_i)}
\right). $$

In the notations of equation (\ref{eq;Dioph_lin}), $\delta (
\overline{\Phi'_0(f)}) = \delta(\overline{\Phi_0(f)})$ because $\delta(f_{tot}
)= 0$ as $f_{tot}$ is in $F$. Moreover, we noticed that for all edge $e$, $n(\Phi'_0(f), e) =
n(\Phi_0(f), e)   $. Therefore the two
systems of Diophantine
equations   (\ref{eq;Dioph_lin})  for the $\gamma_i$ being the
images of a fixed basis of $F$ by $\Phi_0$ and  for the $\gamma_i$
being the
images of a fixed basis of $F$ by $\Phi'_0$, are syntaxically the same system of
equations.
Tautologically,  if one has a
solution, the other aslo.

\end{proof}

For the last (and main) result, we'll need the theory of JSJ
decompositions for relatively hyperbolic groups. The theory initiated
by Rips and Sela is developed by Guirardel and Levitt in a very stable
and useful formulation. We refer to \cite{GL10, GL_JSJ}, from which we
recall 
the following existence and characteristic result. 

\begin{prop}\cite[Thm. 13.1, Coro. 13.2]{GL10} \label{prop_GL_JSJ}
Let $(G,\calP)$ be a torsion free relatively freely indecomposable,
relatively hyperbolic group. The canonical JSJ splitting of
$(G,\calP)$ is a finite graph-of-groups decomposition of $(G,\calP)$
with edge groups elementary  (cyclic or parabolic), bipartite, such that the
groups of a vertex of one color (black) are elementary, and those of
the other color (white) are either fundamental groups of surfaces with
boundary, the adjacent edge groups being associated to the boundary
components, or groups that inherit a relatively hyperbolic structure
for which there is no  non-trivial    
elementary splitting in which the      adjacent edge groups are
conjugated into factors    (the later are called rigid).

The canonical JSJ splitting is such that any automorphism of
$(G,\calP)$ induces an automorphism of the splitting, and is also such
that any other elementary splitting of $(G,\calP)$ has a common
refinement with it. 
\end{prop} 

Here, a  refinement is an equivariant blow-up of vertices in the
Bass-Serre tree. 

Recall that in the case of a suspension of a finitely generated group, no vertex group
can be a surface group (see \cite[2.11]{DCPout}). Moreover, in the
case of a suspension of a free group by an automorphism that is
RH-noPS, then all black vertex groups are cyclic, non-parabolic, by
definition of RH-noPS.

\begin{prop} Given a finitely generated free group and an automorphism
  $\phi$ of $F$ that is RH-noPS, one can compute the canonical JSJ
  splitting of $F\sd_\phi \bbZ$. 
\end{prop}

\begin{proof}
Let $(G, \calP)$ be the relatively hyperbolic structure for $F\sd_\phi
\bbZ$. The computation of the relative hyperbolicity structure was done in
\ref{prop;compute_RHS}.  We can thus enumerate the non-trivial bipartite
peripheral cyclic splitting of $(G, \calP)$. 
 Assume that we are proposed   such a  bipartite  cyclic  splitting
of $(G, \calP)$. We need to decide whether or not this proposed
splitting is the JSJ splitting.   Recall that vertex groups are
suspensions of finitely generated subgroups of $F$, and 
 we may find a presentation of the
semidirect product for each of them, by enumeration. We use a result
of Touikan here, specifically, \cite[Thm C.]{Tou} that 
allows to check whether or not,  the white vertex groups  are rigid 
                        (in the sense of Proposition \ref{prop_GL_JSJ}).
 Note that since we assume the absence
of parabolic splitting, we do not need to satisfy the assumption of
\cite{Tou} on algorithmic tractabiliy of parabolic subgroups. 
One can also easily check whether the black vertex
groups are cyclic, and edge groups are maximal cyclic in their white
adjacent vertex group.  If all these conditions are verified for a
splitting $\mathbb{X}$, the
splitting has, by the previous proposition, a common refinement with the JSJ
splitting $\mathbb{X}_0$. However, because of lack of surface groups
in  $\mathbb{X}$ and  $\mathbb{X}_0$, for all vertex
group $G_v$ in either of them, equipped with the peripheral structure
of its adjacent edge groups,  all splittings
of $G_v$ over cyclic groups in which the peripheral structure is
elliptic, are trivial.

 Thus
considering $\mathbb{T}$ the Bass-Serre tree of a common refinement of
$\mathbb{X}$ and  $\mathbb{X}_0$, it is apparant that the edges to be
collaped to obtain the tree of $\mathbb{X}$ and the tree of
$\mathbb{X}_0$ are exactly those with an end of valence $1$, and after
them, those with an end of valence $2$ with same stabilizer ({\it
  i.e. } the redundant vertices; note that there is a choice, but
either choice lead to the same tree). 
 Since the two collapses toward $\mathbb{X}$ and $\mathbb{X}_0$ can be
 made by the same choices od edges to collapse, it follows that
 $\mathbb{X}$ and $\mathbb{X}_0$ coincide.

\end{proof}

\begin{prop} \label{prop;last_main}
  Let $F$ be a finitely generated  free group. There is an (explicit) algorithm that given two automorphisms $\phi_1, \phi_2$ of $F$ terminates if both are  RH-noPS, and indicates whether they are conjugated in $\Out(F)$. 

\end{prop}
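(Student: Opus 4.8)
The plan is to reduce, via Lemma \ref{lem;wellknown}, to deciding whether the suspensions $G_1 = F\sd_{\phi_1}\langle t\rangle$ and $G_2 = F\sd_{\phi_2}\langle t\rangle$ admit a fiber-and-orientation preserving isomorphism, and then to break this question up along a canonical cyclic splitting of each $G_i$. First I would invoke Proposition \ref{prop;compute_RHS} together with \cite{GL} to compute explicitly the relative hyperbolic structures of $G_1$ and $G_2$, with the parabolic subgroups presented as suspensions of polynomial subgroups of $F$. Since both $\phi_i$ are RH-noPS, the groups $G_i$ admit no peripheral splitting over a parabolic subgroup, so every essential peripheral splitting is over a genuinely transverse cyclic subgroup. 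I would then use Touikan's splitting algorithm \cite{Tou} to compute the canonical cyclic JSJ decomposition $\bbX_i$ of $G_i$ relative to the parabolic structure; the underlying graphs $X_i$ are finite and the edge groups are cyclic.

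The key structural observation is that every vertex group of $\bbX_i$ is a rigid suspension. Indeed, by Proposition \ref{prop;no_free} a suspension of a finitely generated group has no hanging surface or hanging bounded Fuchsian vertex group in any splitting, so the flexible (quadratically hanging) vertices that could otherwise occur in a cyclic JSJ are absent; hence every vertex group $\Gamma_v$ is rigid, i.e.\ has no essential splitting over a cyclic or parabolic subgroup relative to its induced peripheral structure, and is in particular RH-noES, \emph{a fortiori} RH-noPS. By Lemma \ref{lem;vert_are_susp} each $\Gamma_v$ is itself a suspension of $\Gamma_v\cap F$, so I can read off its suspension structure, and the conjugacy classes of its adjacent cyclic edge groups furnish a transverse cyclic peripheral structure $\calP_v$. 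For each graph isomorphism $\Phi_X\colon X_1\to X_2$ and each pair of matched vertices I would apply the strengthened Proposition \ref{prop;better} to the two vertex groups equipped with these peripheral structures. Because the vertex groups are RH-noPS the algorithm terminates, and because they are rigid the cyclic-peripheral-splitting outcome of Proposition \ref{prop;better} cannot occur, so it returns either a list $\calL_v$ of fiber-, orientation-, and peripheral-structure-preserving isomorphisms together with an integer $m$, or a certificate that no such isomorphism exists.

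Having assembled, for a fixed $\Phi_X$, the integer $m$ and the lists $\calL_v$, I would enumerate all compatible choices of vertex isomorphisms $\phi'_v\in\calL_v$ with matching edge isomorphisms $\phi_e$, form the candidate graph-of-groups isomorphism $\Phi'_0=(\Phi_X,(\phi'_v),(\phi_e),(\gamma'_e))$, and test by ordinary linear algebra whether the Diophantine system (\ref{eq;Dioph_lin}) attached to the images of a fixed basis of $F$ and of $t$ under $\Phi'_0$ has a solution. By Proposition \ref{prop;gdg_with_small_list} a fiber-and-orientation preserving isomorphism $G_1\to G_2$ inducing $\Phi_X$ exists if and only if one of these finitely many tests succeeds. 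Ranging over the finitely many graph isomorphisms $\Phi_X$ then decides whether any fiber-and-orientation preserving isomorphism $G_1\to G_2$ exists, which by Lemma \ref{lem;wellknown} is exactly the conjugacy of $\phi_1$ and $\phi_2$ in $\Out(F)$.

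I expect the main obstacle to lie in the vertex-group reduction rather than in the final bookkeeping: one must ensure that the cyclic JSJ relative to the parabolics is canonical enough that every fiber-and-orientation preserving isomorphism respects it, so that enumerating $\Phi_X$ together with the lists $\calL_v$ genuinely captures all isomorphisms, and that the transverse cyclic peripheral structures induced by the edge groups are matched consistently across $\Phi_X$ so that Proposition \ref{prop;gdg_with_small_list} applies with coherent data. A secondary difficulty is to extract rigidity of the vertex groups purely from Proposition \ref{prop;no_free} and the no-parabolic-splitting hypothesis, and to check that the integer $m$ returned by Proposition \ref{prop;better} on each vertex meets the centralizer condition of (\ref{prop;better}-(\ref{point;1})) required by Proposition \ref{prop;gdg_with_small_list}; as throughout this section, the argument also inherits the reliance on the unpublished results \cite{GL, Tou, DG_CbDF}.
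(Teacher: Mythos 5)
Your proposal follows essentially the same route as the paper: reduce via Lemma \ref{lem;wellknown}, compute the canonical cyclic (\ZmJSJ) decomposition of each suspension using \cite{Tou} with Proposition \ref{prop;no_free} guaranteeing the absence of flexible vertices, apply Proposition \ref{prop;better} vertex-by-vertex to produce the lists $\calL_v$, and conclude by combining the Diophantine test of Proposition \ref{prop;orbit_fiber} with the equivalence in Proposition \ref{prop;gdg_with_small_list}. The difficulties you flag at the end (canonicity of the splitting, rigidity of the vertex groups, and the centralizer condition on $m$) are exactly the points the paper disposes of via Touikan's Theorem C, Proposition \ref{prop;no_free}, and Lemma \ref{lem;in_the_DF}, so your argument is correct and matches the paper's.
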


\begin{proof}

One can compute the JSJ decomposition of both suspensions, by the
previous proposition. 
The situation reduces to the case where an isomorphism of underlying graphs of the JSJ  splittings is chosen, and we need to decide whether there is an isomorphism of graph of groups (inducing that isomorphism of underlying graphs) that preserve the fiber, and the orientation. We also choose a maximal subtree $\tau$ of the underlying graph $X$ so that all edges outside this subtree correspond to Bass generators in the fundamental group of the graph of group.

Let us write $\bbX_1, \bbX_2$ the   JSJ  splittings of $G_1= F\sd_{\phi_1} \langle t \rangle $ and $G_2 =F\sd_{\phi_2} \langle t \rangle $, and we identify the underlying graphs, according to the choice of isomorphism above (the algorithm has to treat all possible such isomorphisms of graphs in parallel).  We write $\Gamma_{v,i}$ for the vertex group of $v$ in $\bbX_i$.

 We remark that any elementary vertex group (or edge group) is cyclic,
 and transveral to the fiber (Lemma \cite[2.7]{DCPout}), hence the
 orientation of the suspension provides a canonical marking of each
 edge group.  For each non-elementary vertex group, Lemma
 \cite[2.6]{DCPout} indicates that it is a suspension, and it is  equiped with the
 thus marked cyclic transverse peripheral structure of its adjacent
 edge groups.    Since, by \cite[2.11]{DCPout}, it is not a surface nor a free group,   
 by property of the JSJ decomposition, it is  RH-noPS
relative to the peripheral structure, and  it is possible, thanks 
to Proposition \ref{prop;better} to decide whether there is an isomorphism $\Gamma_{v,1} \to \Gamma_{v,2}$ , preserving fiber, orientation, and the (cyclic, transverse) peripheral structure of its adjacent edge groups, thus marked.

If for some vertex there is no such isomorphism, then there cannot be
any fiber-and-orientation preserving isomorphism between $G_1$ and
$G_2$, inducing this graph isomorphism (hence $\phi_1$ and $\phi_2$
are not conjugated, in view of Lemma \ref{lem;wellknown}). If, on the contrary, for all such vertices, there exists such an isomorphism, then  Proposition \ref{prop;better}  actually provides  a list $\calL_v$ of such isomorphisms, that satisfies  (\ref{prop;better}-(\ref{point;1})), for all vertex $v \in X^{(0)}$. Then,  for any choice $(\Phi_v, v\in X^{(0)})$ in $\prod_{v} \calL_v$,  one can extend this collection into an isomorphism of graph-of-groups $\Phi$, by chosing appropriately the images of the Bass generators to be $b_{e}, e\in X \setminus \tau$ (so that they conjugate the edge group in their origin vertex to the edge dgroup in their target vertex). Let us write $(\Phi_s, s\in \prod_{v} \calL_v)$ the collection thus obtained.

Using \ref{prop;orbit_fiber}  
 we can decide whether, given $\Phi_s$ for some $s\in \prod_{v} \calL_v$,  there is an automorphism of graph-of-groups, in the orbit of $\Phi_s$ under the small modular group of $G_2$, that is  fiber-and-orientation preserving.
If there is one, then we may stop and declare, in view of Lemma \ref{lem;wellknown} that $\phi_1$ and $\phi_2$ are conjugated.

Assume then that  there is none such  fiber-and-orientation preserving
automorphism in the orbits of all $\Phi_s, \, s\in \prod_{v} \calL_v$. By Proposition \ref{prop;gdg_with_small_list} (``\ref{point;2}$\implies$\ref{point;3}''), there is no  isomorphism of graph-of-groups preserving fiber and orientation. We are done. \end{proof}

\subsection{A lemma on Dehn fillings}

In this paragraph we prove a Lemma that we used above. We refer the
reader to the setting of \cite[\S 7.]{DGO}. 

In particular we will use
the parabolic cone-off construction, which is a specific way to cone
off horosheres of a system of horoballs of a space associated to a
relatively hyperbolic group. Using this specific way allows to get
quantitative hyperbolicity estimates, and rotating families, while preserving, almost
without distortion, most of what
occurs (locally) in a thick part of that space.

\begin{lemma} \label{lem;in_the_DF}  
Let $(G,\calP)$ be a relatively hyperbolic group, and $\gamma \in G$, a hyperbolic element.  

There exists $m_0$ such that for all $m>m_0$, if  $h$ is such that $\bar h$ centralises $\bar \gamma$ in $\bar{G}^{(m)}$, then, there is $z\in K_m$, and $h'$ centralizing $\gamma$ such that $h=h'z$.

\end{lemma}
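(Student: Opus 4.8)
The plan is to prove that, for $m$ large, the natural map $Z_G(c)\to Z_{\bar G^{(m)}}(\bar c)$ is onto; this is exactly the content of the statement, since asking $h=h'z$ with $h'$ centralizing $c$ and $z\in K_m$ is asking that every element of $Z_{\bar G^{(m)}}(\bar c)$ lift modulo $K_m$ to a genuine centralizer of $c$. As $K_m$ is normal, a left factorization $h=z^{-1}(zh)$ with $zh\in Z_G(c)$ is equivalent to the right factorization requested. So it suffices, given $h$ with $\bar h\in Z_{\bar G^{(m)}}(\bar c)$, to produce $z\in K_m$ with $zh\in Z_G(c)$. I would first reformulate this geometrically. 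Put $w:=hch^{-1}$. Then $\bar h$ commuting with $\bar c$ says precisely $\bar w=\bar c$, i.e. $wc^{-1}\in K_m$, and the condition $zh\in Z_G(c)$ is the same as $z\,w\,z^{-1}=c$. Thus the whole lemma reduces to: \emph{if $w$ is loxodromic with $\bar w=\bar c$, then some $z\in K_m$ conjugates $w$ to $c$.}

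Next I would set this inside the framework of \cite[\S7]{DGO}. There $G$ acts on a hyperbolic cone-off $\dot X$ whose apices carry the rotation subgroups of the filling; for $m\geq m_0$ this family is very rotating, $K_m$ is its normal closure, and $\bar G^{(m)}$ acts on the quotient $\dot X/K_m$, which is again relatively hyperbolic. Since $c$ is a hyperbolic element it acts loxodromically on $\dot X$ along a quasi-axis $\gamma$ that meets each apex-neighbourhood only boundedly; choosing $m_0$ large (depending on $c$) I may assume that $\gamma$ stays $\rho$-far from every apex, where $\rho$ is the very-rotating threshold, and that $\bar c$ (hence $\bar w$) is loxodromic downstairs. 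Then $c$ and $w$ project to the \emph{same} oriented quasi-axis $\bar\gamma$ in $\dot X/K_m$, while $\gamma$ and $h\gamma$ are two lifts of $\bar\gamma$ upstairs.

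The geometric heart of the argument, which I expect to be the main obstacle, is a lifting/rigidity statement for very rotating families: (i) any two bi-infinite quasi-geodesics of $\dot X$ that stay $\rho$-far from the apices and project to the same quasi-geodesic of $\dot X/K_m$ differ by an element of $K_m$; and (ii) a nontrivial element of $K_m$ cannot coarsely fix, with orientation, such a quasi-geodesic. Both should follow from the Greendlinger-type structure of elements of $K_m$ in a very rotating family (\cite[\S5]{DGO}), which expresses that the partial action of $K_m$ away from the apices is, in the relevant sense, free. Granting (i), I pick $z\in K_m$ carrying the oriented axis $h\gamma$ of $w$ onto the oriented axis $\gamma$ of $c$ (orientations match because $z$ covers the identity and $\bar w=\bar c$ translate $\bar\gamma$ the same way). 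Then $z\,w\,z^{-1}$ is loxodromic with the same oriented quasi-axis $\gamma$ and the same translation length as $c$.

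Finally I would conclude with (ii). The element $u:=(z\,w\,z^{-1})\,c^{-1}$ lies in $K_m$, because $z\in K_m$ gives $\overline{z\,w\,z^{-1}}=\bar w=\bar c$; and $u$ coarsely fixes $\gamma$ with its orientation, since $z\,w\,z^{-1}$ and $c$ translate along $\gamma$ by the same amount in the same direction, so $u$ fixes both endpoints $\gamma^{\pm}$ with zero translation length along $\gamma$. By (ii), $u=1$, hence $z\,w\,z^{-1}=c$, i.e. $zh\in Z_G(c)$. Setting $h':=zh$ and using that $z^{-1}\in K_m$ with $K_m$ normal, I obtain the desired decomposition $h=h'z''$ with $h'\in Z_G(c)$ and $z''\in K_m$, valid for all $m\geq m_0$. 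The only quantitative input is the choice of $m_0$ making the family very rotating with constant exceeding the geometry of $\gamma$ — precisely the regime in which \cite{DGO, O_DF} guarantee that the filling is relatively hyperbolic with $\bar c$ still loxodromic.
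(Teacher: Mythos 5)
Your argument is correct in outline, but it is organized quite differently from the paper's proof, even though both ultimately rest on the very rotating family machinery of \cite[\S 5, \S 7]{DGO}. The paper works entirely with the finite configuration: it puts $h$ in a ``minimal form'' with respect to the apices met by $[x_0,hx_0]$ (minimizing $d(a_-,(\Fix(a)\cap K_m)a_+)$ at each apex), assumes $hch^{-1}c^{-1}\in K_m$ is nontrivial, applies the Greendlinger lemma \cite[Lemma 5.10]{DGO} to locate a shortening apex on $[hcx_0,chx_0]$, and then uses thin-pentagon geometry to force that apex either onto $[x_0,hx_0]$ (contradicting minimality) or onto both $[x_0,hx_0]$ and its $c$-translate (forcing $c$ to fix an apex, contradicting hyperbolicity of $c$). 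You instead pass to the bi-infinite picture: you lift the common quasi-axis $\bar\gamma$ of $\bar c=\bar w$ and argue that the two lifts $\gamma$ and $h\gamma$ differ by an element $z$ of $K_m$, after which $zwz^{-1}c^{-1}$ is an element of $K_m$ with small displacement far from the apices, hence trivial. Your step (i) — coarse unique lifting of quasi-geodesics staying far from the apices — is where the real work sits and is not a quotable statement of \cite{DGO}; it does follow by a short bootstrapping argument (the fibers of $\dot X\to\dot X/K_m$ are $K_m$-orbits, and a nontrivial element of $K_m$ displaces any point far from the apices by a definite large amount, so the lifting element is locally, hence globally, constant along $\gamma$), and this displacement bound is exactly the same input the paper extracts from the rotating-family structure. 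Two small cautions: your intermediate ``reduction'' is overstated — it is not true that \emph{every} loxodromic $w$ with $\bar w=\bar c$ is $K_m$-conjugate to $c$ (take $w=cg$ with $g\in K_m$ loxodromic of large translation length); your argument only uses, and only proves, the case $w=hch^{-1}$, where the equal translation lengths and the axis $h\gamma$ are essential. Also, the distance from the axis of $c$ to the apices is governed by the depth of the horoballs chosen in the cone-off construction (this is the paper's choice of $\mathcal H_0'$ at depth depending on $\|c\|$), not by $m$; $m$ only controls the rotation angle, so the two choices should be kept separate when fixing $m_0$. What your route buys is a reusable statement (rigidity and unique lifting of far-from-apex quasi-geodesics under Dehn filling); what the paper's route buys is a shorter path from the quoted Lemma 5.10 with all constants confined to one finite pentagon.
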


The bound on $m$ will be explicit (but we do not need this particular aspect), though  probably not optimal.

\begin{proof}

Consider a hyperbolic space $X$ associated to
$(G,\calP)$,  upon which $G$ acts as a geometrically finite group, and
for convenience, let us choose it to be a cusped-space as defined by
Groves and Manning (see \cite[\S3]{GM}). Let $\delta$ be a
hyperbolicity constant for $X$. 

 After rescaling, we may assume that
the hyperbolicity constant is actually less than a specific constant $\delta_c$ furnished
by \cite[3.5.2]{Cou_GGD}, (that will allow, as
we did in  \cite[\S 7]{DGO}     to satisfy \cite[5.38]{DGO}, to
 ensure quantitatively that the parabolic
cone-off construction over a separated system of horoballs is
hyperbolic).

 By assumption  $\gamma$ is hyperbolic in  $X$, so let $\rho_0$ be a quasi-axis,
 and $\|\gamma\|$ the translation lenght of $\gamma$ on this axis.  We choose
 $m_0$ such that  $N_{j,m_0}\setminus\{1\}$ does not intersect the
 ball of $P_j$ of radius $10\sinh(r_U) \times 2^{100\delta +
   \|\gamma\|}$, for $r_U$ fixed as in \cite[\S 5.3]{DGO}, namely
 $r_U=5\times 10^{12}$. 

We choose $L_0$ such that in a $\delta$-hyperbolic space, all
$L_0$-local geodesic is a quasigeodesic (for some constants). And we
choose $L_1$ so that any quasigeodesic with these constants is at distance $\leq L_1$
from a geodesic with same end points. The constant $L_3$ is set to be
$\geq 50\times 16\times 900$.

  Consider  $\calH_0$ the $2$-separated invariant system of horoballs
  of $X$ (associated to $\calP$),   and in this system,  consider 
  the system of horoballs at  
   depth $(50\delta+ 20\|\gamma\| +L_0+L_1+L_3)$,  which we call $\calH$.  
This way, $\rho_0$, which has a  fundamental domain for $\gamma$ of length
$\|\gamma\|$, and which intersects $X\setminus \calH_0$,   does not  get
$(L_0+L_1+L_3)$-close   to an horoball of  $\calH$.

We then consider the parabolic cone-off $\mathcal{C}(X, \calH)$, as defined in
\cite[\S 7, Def. 7.2]{DGO}, for this pair, and for a radius of cone $
r_U=5\times 10^{12} $.

By  \cite[Lem. 7.4]{DGO},  this parabolic cone-off is
$\delta_p$-hyperbolic, (for a value of $\delta_p$ estimated to be
$16\times 900$
in \cite[\S 5.3]{DGO}) and moreover the image of $\rho_0$ is a
$L_0$-local geodesic. By our choices of constants it follows that  a
quasi-axis $\rho$ of $\gamma$ in the parabolic cone-off does not approach any
of the cones by a distance  $50\delta_p$.

We now work only in the parabolic cone-off.

Observe also that,   for the chosen $m$, any (non-trivial) element of
$N_{j,m}$ translate on the corresponding horosphere of $\calH$ by a
distance of at least  $10\sinh(r_U)$ (measured in the graph distance
of the horosphere). Therefore,  for the chosen $m$, $K_m$ is the group
of a very rotating family at the apices of the parabolic cone-off, in
the sense of \cite[\S 5.1]{DGO}.

Let $x_0 \in \rho$. 
  The segment $[x_0, \gamma x_0]$ is contained in $\rho$, and thus does not  get $50\delta_p$ close to an apex.

Consider the segment  $[x_0, hx_0]$ and for all apex $a$ on it, define the two points $a_-$ and $a_+$ on  $[x_0, a]$ and $[a, hx_0]$ (subsegments of $[x_0, hx_0]$) at distance  $27\delta_p $ from $a$ (they exist since the cones have much larger radius than $27\delta_p$,  and $x_0$ is not in a cone).

   By multiplying $h$ by elements of $K_m$, we may assume that $d(a_-, a_+) = d( a_-,   (\Fix(a)\cap K_m)  a_+    )$, for every apex $a$ in the segment $[x_0, hx_0]$.

By assumption,  $h\gamma h^{-1}\gamma^{-1} \in K_m$, and we can assume that it is
non trivial (otherwise there is nothing to prove). By   \cite[Lemma
5.10 (pointed Greendlinger lemma)]{DGO},  the segment $[  h\gamma x_0,
\gamma hx_0 ] $ contains an apex $a_0$ and a $5\delta_p$-shortening pair: a
pair of points at distance $27\delta_p$ from the same apex, such that
the image of one by an element fixing the apex and in the rotating
group is at distance $\leq 5\delta_p$ from the other.

Hyperbolicity in the pentagon $(x_0,  hx_0, h\gamma x_0, \gamma h x_0,
\gamma x_0)$, together
with the absence of apices in  $[x_0, \gamma x_0]$, and in its image after
translation by $h$, $[hx_0, h\gamma x_0 ]$, shows that at least one
segment among $[x_0,  hx_0]$   and $[\gamma x_0, \gamma hx_0]$ must
get at distance $\delta_p$ from $a_0$ and $5\delta_p$-follows two arcs of $[a_0,
\gamma h
x_0]$ and $[a_0, h\gamma x_0]$ (both subsegments of $[ h\gamma
x_0, \gamma h x_0]$) for at least $50\delta_p$. By properties of
rotating families, we see that one of the two segments $[x_0,  hx_0], [\gamma x_0, \gamma hx_0]$   must contain the apex $a_0$. Assume that only one of them contains $a_0$, and let us say that it is  $[x_0,  hx_0]$ (if it is the other, the argument is identical).  
Hyperbolicity forces the  $5\delta_p$-shortening pair of  $[  h\gamma x_0,
\gamma hx_0 ] $ at $a_0$ to be $5\delta_p$-close to $a_-$ and $a_+$. Therefore some element of $K_m\cap \Fix(a_0)$ takes $a_+$ to a point at distance at most $20\delta_p$ from $a_-$, thus contradicting the above minimality condition. 

It follows that both  $[x_0,  hx_0]$   and $[\gamma x_0, \gamma hx_0]$ (which is its image by $\gamma$)  must contain $a_0$.

But then,  the image of $a_0$ by $\gamma$ is at distance at most $\|\gamma\|$ from $a_0$. By separation of apices, it must then be $a_0$, and $\gamma$ fixes an apex, contrarily to our assumption.

\end{proof}

{\footnotesize

\vskip 1cm

\noindent {\sc Fran\c{c}ois Dahmani, \\ Universit\'e Grenoble Alpes, Institut Fourier, F-38000 Grenoble, France.
\\{\tt e-mail: francois.dahmani@univ-grenoble-alpes.fr} 

}}

\begin{thebibliography}{99}

\bibitem[ALM]{Arzh} G. Arzhantseva,  J.-F. Lafont,  A. Minasyan, 
Isomorphism versus commensurability for a class of finitely presented
groups. J. Group Theory 17 (2014), no. 2, 361-378

\bibitem[BMV]{BMV} O. Bogopolski, A. Martino, E. Ventura, Orbit decidability and the conjugacy problem for some extensions of groups. 
Trans. Am. Math. Soc. 362, No. 4, 2003-2036 (2010). 
 

\bibitem[Ba]{Bass} H. Bass, Covering theory for graph of groups, J. Pure
  Appl. Algebra 89 (1993), 3--47.


 
\bibitem[Br1]{Br_gafa} P. Brinkmann,  Hyperbolic automorphisms of free
  groups. Geom. Funct. Anal. 10 (2000), no. 5, 1071--1089. 

\bibitem[Br2]{Br_split}  P. Brinkmann,  Splittings of mapping tori of free
  group automorphisms. Geom. Dedicata 93 (2002), 191--203.   

\bibitem[D]{D_israel} F. Dahmani, Existential questions in (relatively)
  hyperbolic groups. Israel J. Math. 173 (2009), 91--124.  

\bibitem[DGu1]{DGu_JoT}  F. Dahmani, V. Guirardel, Foliations for solving
  equations in groups: free, virtually free and hyperbolic groups,
  J. of Topology, 3, no. 2 (2010) 343--404. 

\bibitem[DGu2]{DGu_gafa} F. Dahmani, V. Guirardel,  The isomorphism problem
  for all hyperbolic groups. Geom. Funct. Anal. 21 (2011), no. 2,
  223--300. 


 





\bibitem[DGr]{DGr} F. Dahmani, D.  Groves,  The isomorphism problem for
  toral relatively hyperbolic groups. Publ. Math. Inst. Hautes
  \'Etudes Sci. No. 107 (2008), 211--290. 









\bibitem[Lo]{Los} J. Los, On the conjugacy problem for automorphisms of free groups.
Topology 35 (1996), no. 3, 779--808. 



\bibitem[Lu1]{Lu1}
M.~Lustig.
 Structure and conjugacy for
automorphisms of free groups I.
MPI-Bonn preprint series 2000, No.
{241}; http://www.mpim-bonn.mpg.de/preprints

\bibitem[Lu2]{Lu2}
M.~Lustig.
 Structure and conjugacy for
automorphisms of free groups II.
MPI-Bonn preprint series 2001, No.
{4}; http://www.mpim-bonn.mpg.de/preprints

\bibitem[Lu3]{Lu3}
M.~Lustig,
 Conjugacy and centralizers for iwip automorphisms of free
groups, in ``Geometric Group Theory'', Trends in Mathematics,
197--224. Birkh\"auser Verlag, Basel, 2007



\bibitem[P]{Pap} P. Papasoglu, An algorithm detecting hyperbolicity. Geometric and computational perspectives on infinite groups (Minneapolis, MN and New Brunswick, NJ, 1994), 193--200, DIMACS Ser. Discrete Math. Theoret. Comput. Sci., 25, Amer. Math. Soc., Providence, RI, 1996.

\bibitem[Sel]{Sel} Z. Sela,  The isomorphism problem for hyperbolic groups,
  Ann. Math., 141 (1995), 217--283.

\bibitem[Ser]{Serre} J.-P. Serre, Arbres, Amalgames, $SL_2$, Soci\'et\'e
  Math\'ematique de France,  Asterisque no. 46 (1977). 

\bibitem[Sta]{Sta} J. Stallings, Group theory and three-dimensional
  manifolds. Yale Math., Monographs.  no.4, Yale University Press (1971). 


\end{thebibliography}

\begin{thebibliography}{99}



\bibitem[BFH-00]{BFH1_Tits1} M. Bestvina, M. Feighn, M. Handel, The Tits alternative for Out(Fn). I. Dynamics of exponentially-growing automorphisms. Ann. of Math. (2) 151 (2000), no. 2, 517--623. 

\bibitem[BFH-05]{BFH2_Tits2} The Tits alternative for Out(Fn). II. A Kolchin type theorem. Ann. of Math. (2) 161 (2005), no. 1, 1--59. 
 
\bibitem[B]{Br_gafa} P. Brinkmann,  Hyperbolic automorphisms of free
  groups. Geom. Funct. Anal. 10 (2000), no. 5, 1071--1089. 



\bibitem[C]{Cou_GGD} R. Coulon, Asphericity and small cancellation
  theory for rotation families of groups, Groups Geom. Dyn. {\bf 5},
  (2011), no.4  729--765.

\bibitem[DG-11]{DGu_gafa} F. Dahmani, V. Guirardel,  The isomorphism problem
  for all hyperbolic groups. Geom. Funct. Anal. 21 (2011), no. 2,
  223--300. 

\bibitem[DG-13]{DG_PPS}  F. Dahmani, V. Guirardel, Presenting parabolic subgroups, Alg. Geom. Top. 13 (2013) 3203-3222. 
 
\bibitem[DG-15]{DG_CbDF} F. Dahmani, V. Guirardel, recognizing a
  relatively hyperbolic group by its
  Dehn Fillings,  arXiv:1506.03233


\bibitem[DGO]{DGO} F. Dahmani, V. Guirardel, D. Osin, Hyperbolically embedded subgroups and rotating families in groups acting on hyperbolic spaces,  arXiv:1111.7048.

\bibitem[DGr]{DGr} F. Dahmani, D.  Groves,  The isomorphism problem for
  toral relatively hyperbolic groups. Publ. Math. Inst. Hautes
  \'Etudes Sci. No. 107 (2008), 211--290. 



\bibitem[D]{DCPout}  F. Dahmani, On suspensions, and conjugacy of
  hyperbolic automorphisms, Trans. Amer. Math. Soc. to appear.


\bibitem[FH]{FH14} M. Feighn, M. Handel, Algorithmic constructions of
  relative train track maps and CTs, arXiv:1411.6302 

\bibitem[GLu]{GL} F. Gautero, M. Lustig,  The mapping torus of a free group automorphism is hyperbolic relative to the canonical subgroups of polynomial growth. Preprint 2007

\bibitem[GM]{GM} D. Groves, J. Manning, Dehn filling in relatively
  hyperbolic groups, Israel J.  Math., 168 (2008), 317-429.

\bibitem[GL-10]{GL10} V. Guirardel, G. Levitt,  JSJ decompositions:
  definitions, existence, uniqueness II. Compatibility and
  acylindricity. arXiv:1002.4564.


\bibitem[GL-11]{GL_JSJ} V. Guirardel, G. Levitt,  Trees of cylinders and
canonical splittings. Geom. Topol., 15(2) (2011) 977--1012.



\bibitem[L]{Lev09} G. Levitt, Counting growth types of automorphisms
  of free groups. Geom. Funct. Anal. 19, no.4, 1119-1146 (2009). 


\bibitem[Lu-00]{Lu1}
M.~Lustig.
 Structure and conjugacy for
automorphisms of free groups I.
MPI-Bonn preprint series 2000, No.
{241}; http://www.mpim-bonn.mpg.de/preprints

\bibitem[Lu-01]{Lu2}
M.~Lustig.
 Structure and conjugacy for
automorphisms of free groups II.
MPI-Bonn preprint series 2001, No.
{4}; http://www.mpim-bonn.mpg.de/preprints



\bibitem[O-06a]{Osin} D. Osin  Relatively hyperbolic groups, Intrinsic
  geometry, algebraic properties, and algorithmic problems,
  Mem. Amer. Math. Soc. 179 (2006) no. 843. vi+100pp. 

\bibitem[O-06b]{OsinIJAC} D. Osin, Elementary subgroups of relatively
  hyperbolic groups, and bounded generation,  Internat. J. Algebra
  Comput. 16 (2006) no.1 99-118. 

\bibitem[O-07]{O_DF} D. Osin, Peripheral fillings of relatively hyperbolic groups. 
Invent. Math. 167 (2007), no. 2, 295--326.  

\bibitem[P]{Pap} P. Papasoglu, An algorithm detecting hyperbolicity. Geometric and computational perspectives on infinite groups (Minneapolis, MN and New Brunswick, NJ, 1994), 193--200, DIMACS Ser. Discrete Math. Theoret. Comput. Sci., 25, Amer. Math. Soc., Providence, RI, 1996.

\bibitem[Se]{Sel} Z. Sela,  The isomorphism problem for hyperbolic groups,
  Ann. Math., 141 (1995), 217--283.

\bibitem[Si]{Sis} A. Sisto, Contracting elements and random walks, arXiv:1112.2666.



\bibitem[T]{Tou} N. Touikan, Finding tracks in 2-complexes,  arXiv:0906.3902. 
\end{thebibliography}
\end{document}